\documentclass{amsart}
\usepackage[utf8]{inputenc}
\usepackage[T1]{fontenc}
\usepackage{amsmath,amsthm,amsfonts,amssymb}
\usepackage{microtype}
\microtypesetup{expansion=false}
\usepackage{listings}
\usepackage{xcolor}
\usepackage[hidelinks]{hyperref}
\newtheorem{theorem}{Theorem}[section]
\newtheorem{lemma}[theorem]{Lemma}
\newtheorem{proposition}[theorem]{Proposition}
\newtheorem{corollary}[theorem]{Corollary}
\theoremstyle{remark}

\numberwithin{equation}{section}
\definecolor{codebg}{RGB}{248,248,248}
\definecolor{codeframe}{RGB}{220,220,220}
\definecolor{codenumbers}{RGB}{120,120,120}
\definecolor{wlkeyword}{RGB}{0,92,184}
\definecolor{wlstring}{RGB}{163,21,21}
\definecolor{wlcomment}{RGB}{90,90,90}
\lstdefinelanguage{Wolfram}{
  morekeywords={AllTrue,AnyTrue,Binomial,Cancel,Catch,Coefficient,
    CoefficientList,CoefficientRules,Complement,Count,D,Do,Expand,
    Exponent,First,Flatten,Function,Head,If,IntegerQ,Join,Last,Length,
    List,Min,Module,Ordering,PolynomialQ,Print,Product,Rational,Rest,
    Select,Sum,Table,Throw,Together,Total,True,TrueQ,Variables,With},
  sensitive=true,morecomment=[s]{(*}{*)},morestring=[b]"
}
\lstdefinestyle{wlcode}{
  language=Wolfram,basicstyle=\ttfamily\fontsize{9}{10.5}\selectfont,
  backgroundcolor=\color{codebg},frame=single,
  rulecolor=\color{codeframe},framerule=0.4pt,
  columns=fullflexible,keepspaces=true,showstringspaces=false,
  breaklines=true,breakatwhitespace=false,tabsize=2,upquote=true,
  numbers=left,numberstyle=\scriptsize\color{codenumbers},numbersep=8pt,
  xleftmargin=2.2em,framexleftmargin=1.6em,
  keywordstyle=\color{wlkeyword}\bfseries,
  commentstyle=\color{wlcomment}\itshape,
  stringstyle=\color{wlstring}
}
\begin{document}
\title[Askey's convexity conjecture]
{A proof of Askey's convexity conjecture}
\author{K. Castillo}
\address{CMUC, Department of Mathematics, University of Coimbra,
3000-143 Coimbra, Portugal}
\email{math@keniercastillo.com}
\author{S. Yakubovich}
\address{Department of Mathematics, Faculty of Sciences,
University of Porto, Campo Alegre st., 687, 4169-007 Porto, Portugal}
\email{syakubov@fc.up.pt}
\subjclass[2020]{33C10, 33C20, 26A51}
\keywords{Bessel functions, Askey's conjecture, strict convexity,
positive integrals, Bernstein polynomials}
\date{September 9, 2026}
\begin{abstract}
For $-1<\alpha\leq1/2$, let $J_\alpha$ be the Bessel function of the
first kind and let $j_{\alpha,2}$ be its second positive zero.
Define $\beta(\alpha)<\alpha+1$ by
$$
\int_0^{j_{\alpha,2}}u^{-\beta(\alpha)}J_\alpha(u)\,du=0.
$$
We prove that $\beta''(\alpha)>0$ for $-1<\alpha\leq1/2$, including
the left second derivative at $\alpha=1/2$. The continuous extension
$\beta(-1)=0$ is strictly convex on $[-1,1/2]$, strengthening Askey's
1993 convexity conjecture. The analytic argument uses a positive
expansion of a primitive and a vanishing weighted sum. Increasing
ratios of consecutive weights give a negative covariance term, while
one comparison controls slope and curvature. The remaining algebraic
step proves eight rational inequalities by finite exact polynomial
calculations, reproduced in an appendix.
\end{abstract}
\maketitle

\section{Introduction}\label{sec:introduction}

In his 1993 article \emph{Problems which interest and/or annoy me},
Askey \cite[p.~8]{Askey1993} conjectured the convexity of the boundary
of a positivity region for Bessel integrals. Let $J_\alpha$ be the
Bessel function of the first kind and $j_{\alpha,2}$ its second
positive zero. Following Askey
\cite[p.~7, (1.26)]{Askey1993}, define $\beta(\alpha)<\alpha+1$ by
\begin{equation}\label{eq:boundary-intro}
\int_0^{j_{\alpha,2}}u^{-\beta(\alpha)}J_\alpha(u)\,du=0,
\quad -1<\alpha\leq\frac12.
\end{equation}
Existence and uniqueness follow from the classical positivity theory
\cite{AskeySteinig1974,Makai1974}.
The boundary has the continuous extension $\beta(-1)=0$
\cite[p.~7, (1.29)]{Askey1993}. With this endpoint convention,
Askey's conjecture asserts that $\beta$ is convex on $[-1,1/2]$.

Using hypergeometric positivity, Cho, Chung, and Yun
\cite{ChoChungYun2020} obtained the following bounds:
$$
\max\{-\alpha-1,-1/2\}<\beta(\alpha)
\leq-\frac{\alpha+1}{3},
\quad -1<\alpha<\frac12,
$$
and $\beta(1/2)=-1/2$
\cite[Theorem~5.1 and Corollary~5.1]{ChoChungYun2020}.
Their 2026 correction~\cite{ChoChungYunCorrection2026} repairs
Case~II of the proof of Lemma~3.1 without changing its statement or
these positivity bounds.
Building on the positivity bounds of Cho, Chung, and Yun, we establish
strict convexity of the boundary, thereby resolving Askey's convexity
conjecture. More precisely, we prove
$$
\beta''(\alpha)>0,\quad -1<\alpha\leq\frac12.
$$
The proof also uses the real-zero theorem of Cho, Chung, and Park
\cite[Theorem~7.2(A)]{ChoChungPark2026}.

\begin{theorem}\label{thm:main}
For real $\alpha>-1$, let $J_\alpha$ be the Bessel function of the
first kind, defined by
$$
J_\alpha(x)=\sum_{k=0}^\infty
\frac{(-1)^k(x/2)^{2k+\alpha}}{k!\,\Gamma(k+\alpha+1)},
\quad x>0,
$$
where $\Gamma$ is Euler's gamma function. Denote its positive zeros,
in increasing order, by $j_{\alpha,1},j_{\alpha,2},\ldots$.
For real $b<\alpha+1$, put
$$
I_{\alpha,b}(x)=\int_0^x u^{-b}J_\alpha(u)\,du,
\quad x>0,\quad I_{\alpha,b}(0)=0,
$$
where the integral at zero is understood as an improper integral.
For every $-1<\alpha\leq1/2$, there is exactly one real number
$\beta(\alpha)<\alpha+1$ such that
$$
I_{\alpha,\beta(\alpha)}(j_{\alpha,2})=0.
$$
The resulting function $\beta$ is real analytic on $(-1,1/2)$,
extends real analytically to a neighbourhood of $1/2$, and satisfies
$$
\beta(1/2)=-\frac12,
\quad \beta''(\alpha)>0,
\quad -1<\alpha\leq\frac12.
$$
Here $\beta''(1/2)$ is the second derivative of the analytic
extension, which agrees with the left second derivative. The function
has a continuous extension to $[-1,1/2]$ defined by $\beta(-1)=0$,
and this extension is strictly convex.
\end{theorem}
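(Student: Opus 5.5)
The plan is to treat $\beta$ as an implicit function and to reduce convexity to a sign statement about a weighted sum built from a positive series. First, for existence, uniqueness and analyticity, set
$$F(\alpha,b)=I_{\alpha,b}(j_{\alpha,2}).$$
Since $J_\alpha(j_{\alpha,2})=0$, we have $\partial_\alpha F=\int_0^{j_{\alpha,2}}u^{-b}\partial_\alpha J_\alpha(u)\,du$, and $F$ is real analytic in $(\alpha,b)$ for $b<\alpha+1$. The classical positivity theory (Askey--Steinig, Makai) together with the bounds of Cho, Chung and Yun gives a unique zero $b=\beta(\alpha)$ in the window $\max\{-\alpha-1,-1/2\}<b\le-(\alpha+1)/3$. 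I will check $\partial_bF\neq0$ there. Splitting at $j_{\alpha,1}$, we have $-\partial_bF=\int_0^{j_{\alpha,2}}u^{-b}\log u\,J_\alpha(u)\,du$. Write this as $\int u^{-b}(\log u-\log j_{\alpha,1})J_\alpha\,du$, using $F=0$. The integrand is then positive on both $(0,j_{\alpha,1})$ and $(j_{\alpha,1},j_{\alpha,2})$ up to sign, because $\log u-\log j_{\alpha,1}$ and $J_\alpha$ change sign together. Hence $\partial_bF<0$ strictly, and the analytic implicit function theorem gives analyticity on $(-1,1/2)$ and across $1/2$. The value $\beta(1/2)=-1/2$ follows from $J_{1/2}(u)=\sqrt{2/(\pi u)}\sin u$: the integral $\int_0^{2\pi}\sin u\,du=0$ at $b=-1/2$.

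For convexity, I would rescale $u=j_{\alpha,2}t$ so that the endpoint is fixed. I would then expand the primitive as a series
$$I_{\alpha,b}(x)=x^{\alpha+1-b}\sum_k c_k(\alpha,b)\,x^{2k}$$
and reorganize it, via the Bessel expansion or a Bernstein-type representation on $[0,1]$, into a sum $\sum_k w_k\,\phi_k$ with positive weights $w_k$. The defining equation then becomes the vanishing weighted sum $\sum_k w_k\phi_k=0$. Differentiating twice along $b=\beta(\alpha)$ gives
$$\beta''=-\frac{F_{\alpha\alpha}+2F_{\alpha b}\beta'+F_{bb}\beta'^2}{F_b}.$$
Since $F_b<0$, I must show that the numerator quadratic form is positive. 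In the weighted-sum picture each derivative brings down a factor (a logarithm or digamma-type term) depending on $k$. The numerator then splits into a variance-like term, which is automatically nonnegative, and a covariance term. The covariance term is between two monotone sequences in $k$, one of them the ratio $w_{k+1}/w_k$. Showing that these ratios are increasing in $k$ makes the covariance term have the favourable (negative) sign, by a Chebyshev sum inequality applied against the vanishing sum.

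What remains is the cross term involving $\beta'$. Here I would use a single comparison inequality that bounds $\beta'$ (the slope) in terms of the same weights, so that the slope and the curvature term are controlled simultaneously. This comparison reduces to finitely many explicit rational inequalities in $\alpha$ and $b$ on the region cut out by the Cho--Chung--Yun bounds. I would verify these by clearing denominators and checking positivity of polynomials, for instance by Bernstein expansion on boxes, which can be done by exact computation. The real-zero theorem of Cho, Chung and Park is used to locate $j_{\alpha,2}$, or the sign changes of the relevant hypergeometric polynomial, so that the ordering of the weights is valid.

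The main obstacle is this slope–curvature comparison and the monotonicity of the weight ratios uniformly up to $\alpha=1/2$ and down to $\alpha\to-1$, where the weights degenerate. At the endpoint $-1$, continuity follows from $\beta(\alpha)\to0$ by the bounds $-\alpha-1<\beta\le-(\alpha+1)/3$, both of which tend to $0$. Strict convexity of the extension to $[-1,1/2]$ then follows from $\beta''>0$ on the interior together with continuity at $-1$.
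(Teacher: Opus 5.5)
\textbf{Assessment: genuine gap.} Your first part is sound. It is the infinitesimal form of the paper's uniqueness argument, but with a sign slip. On both $(0,j_{\alpha,1})$ and $(j_{\alpha,1},j_{\alpha,2})$ the integrand $u^{-b}(\log u-\log j_{\alpha,1})J_\alpha(u)$ is negative. So $\partial_bF>0$, and you must show that the numerator of your formula for $\beta''$ is \emph{negative}. The endpoint argument at $\alpha=-1$ is fine.

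The convexity part, however, restates the outline without the mechanism, and the two structural claims it does make are wrong. In the paper's variables ($t=\alpha+1$, $\beta(t-1)=t-2Y(t)$, probability weights $w_n$ with $Y'=\sum w_ng_n$ and $\sum w_nv_n=0$) one gets
$Y''=\sum_nw_nG_n(Y')+\operatorname{Cov}_w(g,\sigma)$, where $\sigma_n=w_n'/w_n$.

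\emph{No signed variance term.} The term $\sum_nw_nG_n(Y')$ has no sign. In your formulation a term like $\sum_kw_k\phi_k\ell_k^2$ has none either, because the $\phi_k$ must change sign for $\sum_kw_k\phi_k$ to vanish. This unsigned term is the whole difficulty. The paper controls it by using the vanishing moment a second time. The block $\{2,3\}$ has $v_A>0$ and every $n\geq4$ has $v_n<0$. After retaining the first two covariance terms (with $\delta_3>1/16$), this gives $Y''\le\sum_{n\geq4}\theta_nK_n(Y')$ with probability weights $\theta_n$. Each $K_n$ is increasing, the same pairing gives $Y'<R=g_A-\kappa v_A$, and one shows $K_n(R)<0$.

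\emph{Wrong monotonicity.} The covariance is between $g_n$ and $\sigma_n$, so what is needed is $\frac{d}{dt}\log(w_{n+1}/w_n)>0$, a monotonicity in the parameter $t$. The ratios $w_{k+1}/w_k$ do not increase in $k$: they tend to zero. At $t=3/2$ one has $w_3/w_2=9/8$ but $w_4/w_3=(87-4\pi^2)/90<1$.

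Several indispensable inputs are also missing.
\begin{itemize}
\item The positive expansion is not a rearrangement of the power series at $0$, whose coefficients alternate. It is the expansion of $u^{-Y}P(t,u)$ in powers of $1-u$, and its coefficients are positive because $j_{t-1,2}<j_{t+n-1,1}$ for $n\geq3$.
\item The parameter monotonicity of the weight ratios needs the bounds $17/5<X<4(t+1)$ and $188/(259+94t)<X'/X<2/(t+2)$ on the moving zero. For $n\geq4$ it also needs a Riccati comparison together with the Cho--Chung--Park real-zero theorem for ${}_1F_2(a;b,a+1;-z)$. That theorem enters here, not in locating $j_{\alpha,2}$.
\item The region given by the Cho--Chung--Yun bounds is too large. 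It contains points with $q/Y<1/6$, $q=t-Y$, where $v_4\geq Y/(Y+5)-q>0$, so the sign pattern behind the pairing fails. The paper needs $Y/t<5/7$ along the curve, proved by a separate barrier argument, and also $q<3Y/7+3Y^2/14$, which follows from $v_A>0$.
\item There are infinitely many indices. Finitely many rational inequalities suffice only after propagation lemmas that carry the initial comparisons at $n=4,5,6$ to every $n$: increase of $d_n/h_n$ and of $\eta_n$, and decrease of $D_n$ for $n\geq6$.
\end{itemize}
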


The central point is to use the defining equation twice. After the
change of variables $t=\alpha+1$ and $\beta(t-1)=t-2Y(t)$,
a positive expansion gives $Y'$ as a weighted average together with
a vanishing moment. The first use of this cancellation pairs the
first two indices with the remaining terms and yields a common upper
bound for $Y'$. The second use applies the same pairing to the
curvature identity, where increasing weight ratios contribute a
strictly negative covariance. Monotonicity in an auxiliary real parameter then
allows every curvature term to be compared at the same upper bound.

Section~\ref{sec:contact} constructs the curve and the positive
weights. Section~\ref{sec:curvature} gives the curvature argument and
assembles the proof of Theorem~\ref{thm:main} from four lemmas.
The analytic estimates in Sections~\ref{sec:bounds}--\ref{sec:trace}
control the Bessel zero and the weights as the parameter varies, and
extend the required comparisons to every index. The finite algebraic
input is isolated in Proposition~\ref{lem:finite}: eight rational
inequalities in two independent variables. Section~\ref{sec:algebra}
reduces these inequalities, by monotonicity and concavity, to polynomial
identities and coefficient signs. Appendix~\ref{app:mathematica} gives
self-contained Mathematica code for those exact calculations in place
of lengthy coefficient tables. These checks are needed to complete
the proof. The code carries out only this finite algebraic step;
no numerical integration, parameter mesh, enclosure of
Bessel zeros, or floating-point sign decision is used.

Askey's convexity conjecture is distinct from the positivity problem
for finite Jacobi sums discussed in \cite{Askey1972,Gasper1977}.
Gasper \cite[p.~444]{Gasper1977} proposed the same function $\beta$
as the boundary of the remaining positivity region.
Theorem~\ref{thm:main} proves strict convexity of that boundary,
not the sufficiency assertion for the finite sums.

\section{The defining equation and a positive primitive}\label{sec:contact}

We use the rising factorial $(a)_0=1$ and
$(a)_n=a(a+1)\cdots(a+n-1)$ for $n\geq1$, and Euler's beta integral
$$
B(a,b)=\int_0^1u^{a-1}(1-u)^{b-1}\,du,
\quad a,b>0.
$$
For convergence under integrals we use the dominated convergence
theorem, differentiation under the integral, and the Fubini--Tonelli
theorem in \cite[Theorems~2.24, 2.27, and~2.37]{Folland1999}.
The analytic implicit-function theorem is used in the form of
\cite[Section~2.3]{KrantzParks2002}.

\begin{lemma}[Existence and regularity]\label{lem:contact}
For $0<t\leq3/2$, put $x(t)=j_{t-1,2}$ and $X(t)=x(t)^2/4$.
There is exactly one $Y(t)>0$ such that
\begin{equation}\label{eq:contact-bessel}
\int_0^{x(t)}u^{2Y(t)-t}J_{t-1}(u)\,du=0.
\end{equation}
Both $X$ and $Y$ are real analytic on $(0,3/2)$ and extend real
analytically across $3/2$. Moreover, with
$q(t)=t-Y(t)$, one has $q(t)>0$ and $Y'(t)>0$. At $t=3/2$,
$X(3/2)=\pi^2$ and $Y(3/2)=1$.
\end{lemma}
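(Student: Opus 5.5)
The plan is to reduce everything to the scalar function
$$
\Phi(t,Y)=\int_0^{x(t)}u^{2Y-t}J_{t-1}(u)\,du ,
$$
that is, to $I_{\alpha,b}(j_{\alpha,2})$ with $\alpha=t-1$ and $b=t-2Y$. The condition $b<\alpha+1=t$ becomes $Y>0$. Since $\beta=t-2Y$, we get $\beta'=1-2Y'$, so $Y'>0$ is equivalent to $\beta'<1$.

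\emph{Existence and uniqueness.} For these I would use a hump-comparison (Sturm-type) argument rather than quoting \cite{AskeySteinig1974,Makai1974}. Write $j_1=j_{t-1,1}$, and recall that $J_{t-1}>0$ on $(0,j_1)$ and $J_{t-1}<0$ on $(j_1,x(t))$. Consider
$$
j_1^{\,b}\,I_{t-1,b}(x(t))=\int_0^{x(t)}(u/j_1)^{-b}J_{t-1}(u)\,du .
$$
When $b$ increases, the factor $(u/j_1)^{-b}$ increases where $u<j_1$, i.e. on the positive hump. It decreases where $u>j_1$, i.e. on the negative hump. So this quantity is strictly increasing in $b$, and differentiating under the integral sign shows its $b$-derivative is strictly positive.

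For the limits:
\begin{itemize}
\item As $b\to t^-$, the singularity $u^{-b+t-1}$ at $0$ makes the integral tend to $+\infty$.
\item As $b\to-\infty$, the second hump dominates because it lies where $u/j_1>1$, so the integral becomes negative.
\end{itemize}
This gives exactly one zero $b=\beta(t-1)$, hence exactly one $Y(t)>0$, and moreover $\partial_Y\Phi\neq0$ at the solution.

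\emph{Analyticity.} Next I would establish analyticity. The zero $j_{\alpha,2}$ is simple, and $(\alpha,x)\mapsto J_\alpha(x)$ is real analytic for $\alpha>-1$. So the analytic implicit function theorem \cite{KrantzParks2002} makes $x(t)$, and hence $X(t)$, analytic on a neighbourhood of $(0,3/2]$. To see that $\Phi$ is jointly analytic, I would expand termwise after the substitution $u=x(t)s$:
$$
\Phi=\frac{x^{2Y}}{2^{t-1}}\sum_{k\ge0}\frac{(-X)^k}{k!\,\Gamma(k+t)\,(k+Y)} .
$$
This series converges locally uniformly for $Y>0$ and $t>0$. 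The implicit function theorem then applies again, since $\partial_Y\Phi\neq0$, and gives analyticity of $Y$ on $(0,3/2)$ together with an analytic continuation across $3/2$.

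\emph{Endpoint values.} At $t=3/2$ we have $J_{1/2}(u)=\sqrt{2/(\pi u)}\,\sin u$. Hence $x=2\pi$ and $X=\pi^2$. Taking $Y=1$ makes the integrand a constant multiple of $\sin u$, whose integral over $[0,2\pi]$ vanishes, so $Y(3/2)=1$ by uniqueness.

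\emph{Positivity of $q$.} The inequality $q=t-Y>0$ is exactly $\beta(\alpha)>-\alpha-1$. For $t<3/2$ this is the lower bound of Cho, Chung, and Yun \cite{ChoChungYun2020}. At $t=3/2$ it is the direct check $q=1/2$.

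\emph{Main obstacle: $Y'>0$.} By implicit differentiation,
$$
Y'=-\frac{\partial_t\Phi}{\partial_Y\Phi},
$$
and the sign of $\partial_Y\Phi$ is already known from the monotonicity in $b$. The difficulty is that $\partial_t\Phi$ involves $X'(t)$, the $t$-derivative of the Gamma factors, and the change of exponent, and these have competing signs.

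My first attempt would be to write the bracketed series as a positive expansion of a primitive. Integrating by parts against $\int J_{t-1}$ and using Bernstein-type or beta-integral representations should give
$$
\Phi\propto\sum_k w_k(t,Y)\,c_k ,
$$
with $w_k>0$ and a vanishing weighted moment $\sum_k w_k c_k=0$. This presumably is the positive primitive announced in the section title. One would then show that $\partial_t$ acts on the weights with a monotone logarithmic derivative, so that the vanishing moment forces the numerator to have a definite sign.

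If this fails, a fallback is a continuity argument: $Y'$ cannot vanish on $(0,3/2]$ without producing a multiple solution contradicting the bounds $Y\ge 2t/3$ (which is $\beta\le-(\alpha+1)/3$) and $Y<t$. In that fallback, the real-zero theorem of Cho, Chung, and Park \cite{ChoChungPark2026} would be invoked to exclude degenerate tangencies of the level set $\Phi=0$.
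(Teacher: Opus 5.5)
Your arguments for existence, uniqueness, analyticity, the values at $t=3/2$ and $q>0$ are fine.
\begin{itemize}
\item The hump comparison with $(u/j_{t-1,1})^{-b}$ is the paper's sign-change argument in unnormalised form. It also yields $\partial_Y\Phi<0$ at the root, which the implicit-function theorem needs.
\item Citing the Cho--Chung--Yun bound $\beta(\alpha)>-\alpha-1$ for $q>0$ is legitimate. The paper instead derives $q>0$ internally from $\partial_uQ(t,1)=Xq\int_0^1u^{q-1}P\,du>0$.
\end{itemize}

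The genuine gap is $Y'(t)>0$, which you do not prove. Your first attempt is only a sketch. Monotone logarithmic derivatives of weights are the kind of input needed for $Y''$, not for the sign of $\partial_t\Phi$. The fallback is invalid:
\begin{itemize}
\item Since $\partial_Y\Phi\neq0$, a zero of $Y'$ at $t_0$ only means $\partial_t\Phi(t_0,Y(t_0))=0$. It produces no second root of $\Phi(t_0,\cdot)$.
\item The wedge $2t/3\leq Y<t$ does not exclude critical points of $Y$.
\item The Cho--Chung--Park theorem concerns real zeros of ${}_1F_2$ and says nothing about tangencies of $\Phi=0$.
\end{itemize}

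The missing idea is that the contributions you expect to compete reduce to a single manifestly positive integral. Here is the route the paper takes.
\begin{itemize}
\item Substituting $u=x(t)\sqrt s$ (then renaming $s$ as $u$) turns $\Phi$ into a positive multiple of $M(t,y)=\int_0^1u^{y-1}Q(t,u)\,du$, where $Q(t,u)={}_0F_1(;t;-X(t)u)$.
\item Put $P(t,u)=\int_0^uv^{Y-1}Q(t,v)\,dv$. Since $Q$ changes sign once in $(0,1)$ and $P(t,1)=0$, we get $P>0$ on $(0,1)$. Integration by parts gives $\partial_yM=-\int_0^1P\,u^{-1}\,du<0$.
\item In $\partial_tM$, the $X'$ term is $(X'/X)\int_0^1u^Y\partial_uQ\,du=(X'/X)\{Q(t,1)-Y\int_0^1u^{Y-1}Q\,du\}=0$. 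In your $\Phi$ this is just the vanishing boundary term $x'(t)\,x^{2Y-t}J_{t-1}(x)$.
\item The derivative in the parameter of ${}_0F_1$ equals $\int_0^1u^{Y-1}(A_tQ)(u)\,du$, where $(A_tw)(u)=\int_0^1v^{t-1}(1-v)^{-1}\{w(uv)-w(u)\}\,dv$. This holds because $A_tu^k=-u^k\sum_{j<k}(t+j)^{-1}$.
\item Fubini, the substitution $s=uv$ and the defining equation turn this into $\int_0^1v^{q-1}P(t,v)(1-v)^{-1}\,dv$. The paper restricts to $v<1-\varepsilon$ to justify the cancellation.
\end{itemize}
Hence
\[
Y'(t)=\int_0^1u^{q-1}P(t,u)(1-u)^{-1}\,du\big/\int_0^1P(t,u)u^{-1}\,du>0.
\]
Without this identity, or an equivalent way of signing $\partial_t\Phi$, the assertion $Y'>0$ is unproved.
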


\begin{proof}
For $0<t<3/2$, existence is classical
\cite{AskeySteinig1974,Makai1974}; see
\cite[Theorem~B(i)]{ChoChungYun2020}, with
$\alpha=t-1$ and $\beta=t-2Y$.
At $t=3/2$, the identity $J_{1/2}(s)=\sqrt{2/(\pi s)}\sin s$
\cite[(10.16.1)]{DLMF}
gives $x=2\pi$ and the solution $Y=1$.
For $0\leq u\leq1$, set
\begin{equation}\label{eq:PQ}
Q(t,u)={}_0F_1(;t;-X(t)u),
\quad P(t,u)=\int_0^u v^{Y(t)-1}Q(t,v)\,dv.
\end{equation}
The substitution $s=x(t)\sqrt{u}$ in
\eqref{eq:contact-bessel} gives $P(t,1)=0$, while $Q(t,1)=0$.
For fixed $t$, let $u_0=(j_{t-1,1}/x(t))^2$ be the only zero
of $Q(t,u)$ in $(0,1)$, and let $Y_0>0$ satisfy
\eqref{eq:contact-bessel}.
For any $y>0$,
$$
\int_0^1u^{y-1}Q(t,u)\,du
=\int_0^1u^{Y_0-1}Q(t,u)
\{u^{y-Y_0}-u_0^{y-Y_0}\}\,du.
$$
The right side is strictly negative for $y>Y_0$ and strictly
positive for $y<Y_0$, by the change of sign of $Q$ at $u_0$.
This ensures uniqueness among all positive exponents, including
at $t=3/2$.
When $t$ is fixed, we suppress it in $P,Q$ and write
$Y=Y(t)$, $X=X(t)$, $q=t-Y$. Spatial derivatives are written as
$\partial P/\partial u$, whereas primes on $X,Y$ differentiate $t$.
The identity
$$
Q(t,u)=\Gamma(t)(\sqrt{Xu})^{1-t}
J_{t-1}(2\sqrt{Xu}),\quad 0<u\leq1,
$$
shows that $P$ first increases and then decreases to zero. Therefore
\begin{equation}\label{eq:positive-primitive}
P(t,u)>0,\quad 0<u<1,
\quad \frac{\partial Q}{\partial u}(t,1)>0.
\end{equation}
Also $P(t,u)=O(u^Y)$ at zero and $P(t,u)=O((1-u)^2)$ at one.

The positive Bessel zeros are simple \cite[Section~10.21(i)]{DLMF};
the analytic implicit-function theorem therefore makes $X(t)$
real analytic for $t>0$.
The function $M(t,y)=\int_0^1u^{y-1}Q(t,u)\,du$ is real analytic
for $t,y>0$, as follows from its normally convergent Bessel series.
At a solution, integration by parts gives
$$
\frac{\partial M}{\partial y}(t,Y(t))
=-\int_0^1\frac{P(t,u)}u\,du<0.
$$
The analytic implicit-function theorem proves the asserted regularity
of $Y$, including its extension across $3/2$.

The equation for $Q$ and two integrations by parts give
\begin{equation}\label{eq:trace-integrals}
\begin{aligned}
u\frac{\partial^2Q}{\partial u^2}
+t\frac{\partial Q}{\partial u}+XQ&=0,\\[7pt]
\frac{\partial Q}{\partial u}(t,1)
&=Xq\int_0^1u^{q-1}P(t,u)\,du
=X\int_0^1P(t,u)\,du.
\end{aligned}
\end{equation}
The first equality in the second line is legitimate even before the
sign of $q$ is known: $u^qP=O(u^t)$ and
$u^{q-1}P=O(u^{t-1})$. Its positive left side proves $q>0$.
For the last equality, multiply the differential equation by $u^Y$;
the term left after the integrations by parts is a multiple of
$\int_0^1u^{Y-1}Q\,du=0$.

For $w\in C^1[0,1]$, introduce
$$
(A_tw)(u)=\int_0^1\frac{v^{t-1}}{1-v}\{w(uv)-w(u)\}\,dv.
$$
Its value on $u^k$ is $-u^k\sum_{j=0}^{k-1}(t+j)^{-1}$.
Termwise differentiation of the series for $Q$ consequently gives
$$
\frac{\partial Q}{\partial t}(t,u)
=(A_tQ)(u)+\frac{X'(t)}{X(t)}u\frac{\partial Q}{\partial u}(t,u).
$$
The two endpoint identities imply
$$
\int_0^1u^Y\frac{\partial Q}{\partial u}\,du=0,
\quad \int_0^1u^{Y-1}\log u\,Q\,du
=-\int_0^1\frac{P(t,u)}u\,du.
$$
Fubini's theorem, followed by $s=uv$, gives
$$
\int_0^1u^{Y-1}(A_tQ)(u)\,du
=\int_0^1\frac{v^{q-1}P(t,v)}{1-v}\,dv.
$$
To justify the cancellation in this formula, first restrict the
$v$-integral to $v<1-\varepsilon$ and use the defining equation before passing to
the limit. The original difference is bounded in absolute value by
$\|\partial Q/\partial u\|_\infty(1-v)$, which provides an
integrable majorant. Differentiating $P(t,1)=0$ now yields
\begin{equation}\label{eq:first-variation}
Y'(t)=\frac{\displaystyle\int_0^1u^{q-1}P(t,u)/(1-u)\,du}
{\displaystyle\int_0^1P(t,u)/u\,du}>0.
\end{equation}
All parameter differentiations are dominated locally by integrable
powers of $u$ times finite powers of $|\log u|$.
The endpoint values were identified at the start of the proof.
\end{proof}

It follows that $0<Y\leq1$ for $0<t\leq3/2$. Moreover,
\begin{equation}\label{eq:curvature-conversion}
\beta(t-1)=t-2Y(t),\quad \beta''(t-1)=-2Y''(t).
\end{equation}
Thus it remains to prove $Y''(t)<0$ for $0<t\leq3/2$.

\begin{proposition}\label{lem:zero-separation}
For $0<t\leq3/2$ and $x=j_{t-1,2}$,
$x<j_{t+n-1,1}$ for every integer $n\geq3$.
Furthermore,
\begin{equation}\label{eq:positive-expansion}
u^{-Y}P(t,u)=\sum_{n=2}^\infty c_n(t)(1-u)^n,
\quad c_n(t)>0,\quad 0<u\leq1.
\end{equation}
The function on the left extends to an entire function of $u$.
\end{proposition}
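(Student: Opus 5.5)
The plan is to read the expansion \eqref{eq:positive-expansion} off a first-order differential equation at $u=1$. The positivity of its coefficients will then reduce to the sign of Bessel values at $x$, which is where the zero separation enters.

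\emph{Entire extension and the first coefficients.} Put $F(u)=u^{-Y}P(t,u)$. Substituting $v=us$ in \eqref{eq:PQ} gives
$$
F(u)=\int_0^1 s^{Y-1}Q(t,us)\,ds .
$$
Since $Q(t,\cdot)={}_0F_1(;t;-X\,\cdot)$ is entire and the integral converges locally uniformly ($Y>0$), $F$ is entire. Differentiating $P=u^YF$ and using $\partial P/\partial u=u^{Y-1}Q$ gives
$$
uF'(u)+YF(u)=Q(t,u).
$$
We have $F(1)=P(t,1)=0$, and then $F'(1)=Q(t,1)-YF(1)=0$. Hence $F(u)=\sum_{n\ge2}c_n(1-u)^n$ with $c_n=(-1)^nF^{(n)}(1)/n!$, the series converging for all $u$.

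\emph{Recursion for the coefficients.} Differentiate the ODE $n$ times and evaluate at $u=1$. This gives $F^{(n+1)}(1)+(n+Y)F^{(n)}(1)=Q^{(n)}(1)$, that is,
$$
(n+Y)c_n-(n+1)c_{n+1}=d_n,\qquad d_n=\frac{(-1)^n}{n!}\frac{\partial^nQ}{\partial u^n}(t,1).
$$
Using $\partial_u^n\,{}_0F_1(;t;-Xu)=\frac{(-X)^n}{(t)_n}\,{}_0F_1(;t+n;-Xu)$ together with the Bessel representation of ${}_0F_1$, we get
$$
d_n=\frac{X^n}{n!\,(t)_n}\,\Gamma(t+n)\,(x/2)^{1-t-n}J_{t+n-1}(x).
$$
For $n=1$, $c_1=0$ gives $c_2=-d_1/2=\tfrac12\,\partial Q/\partial u(t,1)>0$ by \eqref{eq:positive-primitive}. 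For $n\ge2$, solve the recursion backwards:
$$
c_n=\frac{d_n+(n+1)c_{n+1}}{n+Y}
=\sum_{m\ge n}\frac{d_m}{m+Y}\prod_{k=n}^{m-1}\frac{k+1}{k+Y}.
$$
The remainder term $c_{M}\prod_{k=n}^{M-1}\frac{k+1}{k+Y}$ tends to $0$ as $M\to\infty$. Indeed the product grows at most polynomially in $M$, while $c_M$ decays faster than any geometric rate because $F$ is entire. Thus $c_n>0$ for all $n\ge3$ as soon as $d_m>0$ for every $m\ge3$. Since $x$ is below $j_{t+m-1,1}$, positivity of $J_{t+m-1}(x)$ is exactly that condition. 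The index $m=2$ is never needed: $c_2$ was obtained directly, and the backward formula is used only for $n\ge3$. This explains why the statement asks for $n\ge3$.

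\emph{Zero separation, the main obstacle.} It remains to show $j_{t-1,2}<j_{t+2,1}$ for $0<t\le3/2$; by monotonicity of $j_{\mu,1}$ in $\mu$ this gives all $n\ge3$. On the left side I would use that $j_{\nu,2}$ increases in $\nu$, so $j_{t-1,2}\le j_{1/2,2}=2\pi$. Near $t=0$ one can use the sharper value $j_{t-1,2}\le j_{0,2}$, or even $j_{-1,2}=j_{1,1}$ in the limit. On the right side I would first try the classical bound $j_{\mu,1}^2>(\mu+1)(\mu+5)$. This handles small $t$ (at $t=0$ it gives $21>j_{1,1}^2\approx14.7$), but it just fails near $t=3/2$, where it gives $38.25<4\pi^2$.

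So I would split $(0,3/2]$ into two or three subintervals and pair the monotone upper bound for $j_{t-1,2}$ on each piece with a sharper lower bound for $j_{t+2,1}$. A candidate is the Airy-type bound $j_{\mu,1}>\mu+1.8557\,\mu^{1/3}$. At $\mu=7/2$ it gives about $6.32$, which exceeds $2\pi\approx6.283$. Alternatively, the real-zero theorem of Cho, Chung, and Park cited in the introduction may give the separation directly. This comparison is tight at $t=3/2$, and that is where I expect the real difficulty.
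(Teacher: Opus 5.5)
The gap is the zero separation $j_{t-1,2}<j_{t+2,1}$, which is the first assertion of the proposition. It is also the only substantive input for positivity, and you leave it as an unfinished plan.

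Your treatment of the expansion is sound and is only a cosmetic variant of reading the Taylor coefficients off \eqref{eq:coeff-integral}. The equation $uF'+YF=Q$, the recursion $(n+Y)c_n-(n+1)c_{n+1}=d_n$, the value $c_2=\frac12\,\partial Q/\partial u(t,1)$, and the backward summation are all correct. The remainder vanishes because $c_M$ decays faster than any geometric rate. Together these correctly reduce $c_n>0$, $n\geq3$, to $J_{t+m-1}(x)>0$ for $m\geq3$.

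The plan does not close with the tools you name. Pairing $j_{t-1,2}\leq2\pi$ with $j_{\mu,1}>\mu+1.8557\mu^{1/3}$ covers only $t$ within about $0.03$ of $3/2$. Take the natural pieces $(0,1/2]$, $[1/2,1]$, $[1,3/2]$ with the monotone upper bounds $3\pi/2$, $j_{0,2}\approx5.52$ and $2\pi$. You would then need $j_{2,1}>3\pi/2$, $j_{5/2,1}>j_{0,2}$ and $j_{3,1}>2\pi$. Your two lower bounds give only about $4.58$, $5.12$ and $5.68$, so all three comparisons fail. Making this route rigorous would need many subintervals and certified enclosures of $j_{\nu,2}$ at non-half-integer orders, none of which you supply. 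The appeal to Cho--Chung--Park is not an argument. Also, the comparison is not tight at $t=3/2$: $j_{7/2,1}\approx6.99$ against $2\pi\approx6.28$. Only your bounds are tight there.

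The missing idea is to evaluate at the zero $x$ itself instead of comparing numerical values of zeros. Since $J_{t-1}(x)=0$, apply the recurrence $J_{\nu-1}(x)+J_{\nu+1}(x)=(2\nu/x)J_\nu(x)$ first with $\nu=t$, then with $\nu=t+1$. This gives
\[
J_{t+2}(x)=\Bigl(\frac{4t(t+1)}{x^2}-1\Bigr)J_t(x).
\]
Interlacing of the zeros of $J_{t-1}$ and $J_t$ gives $j_{t,1}<x<j_{t,2}$, so $J_t(x)<0$. The Rayleigh sum $\sum_kj_{t,k}^{-4}=1/\{16(t+1)^2(t+2)\}$ gives $x^2>j_{t,1}^2>4(t+1)\sqrt{t+2}>4t(t+1)$, the last step because $t<2$. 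Hence the bracket is negative and $J_{t+2}(x)>0$.

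Now $x<j_{t,2}<j_{t+2,2}$, and $J_{t+2}<0$ on $(j_{t+2,1},j_{t+2,2})$, so this sign forces $x<j_{t+2,1}$. Monotonicity of $j_{\mu,1}$ in $\mu$ then gives every $n\geq3$. This supplies exactly the signs $J_{t+m-1}(x)>0$ your backward recursion needs, with no numerical information about Bessel zeros.
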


\begin{proof}
Comparing the coefficients of degree four in the logarithms of the
product representation \cite[(10.21.15)]{DLMF} and the defining
series of $J_t$ gives
$$
\sum_{k=1}^\infty j_{t,k}^{-4}
=\frac1{16(t+1)^2(t+2)}.
$$
The positive zeros of $J_{t-1}$ and $J_t$ interlace
\cite[Section~10.21(i)]{DLMF}, so
$j_{t,1}<x<j_{t,2}$ and $J_t(x)<0$, while
$$
x^2>j_{t,1}^2>4(t+1)\sqrt{t+2}>4t(t+1).
$$
The recurrence \cite[(10.6.1)]{DLMF}, applied at the zero of
$J_{t-1}$, gives
$$
J_{t+2}(x)=\{4t(t+1)/x^2-1\}J_t(x)>0.
$$
Since $x<j_{t+2,2}$, the sign implies $x<j_{t+2,1}$.
The zeros increase with the order \cite[Section~10.21(iv)]{DLMF},
which proves the remaining separations.

The entire extension is
$H(t,u)=\int_0^1v^{Y-1}Q(t,uv)\,dv$.
Its coefficients in powers of $1-u$ are
\begin{equation}\label{eq:coeff-integral}
c_n(t)=\frac{X^n}{n!(t)_n}
\int_0^1v^{Y+n-1}{}_0F_1(;t+n;-Xv)\,dv.
\end{equation}
Contact gives $c_0=c_1=0$. The identity
$Q=YH+u\,\partial H/\partial u$ gives
$$
c_2=\frac12\frac{\partial Q}{\partial u}(t,1)>0.
$$
For $n\geq3$, zero separation makes the integrand in
\eqref{eq:coeff-integral} positive. Differentiating the same identity
twice and using the equation for $Q$ also gives
\begin{equation}\label{eq:first-coeff-ratio}
\frac{c_3}{c_2}=\frac{Y+t+2}{3}.
\end{equation}
\end{proof}

Put
\begin{equation}\label{eq:weights}
A_0(t)=\int_0^1\frac{P(t,u)}u\,du,
\quad w_n(t)=\frac{c_n(t)B(Y(t),n+1)}{A_0(t)},\quad n\geq2.
\end{equation}
Tonelli's theorem and \eqref{eq:positive-expansion} show that
$w_n>0$ and $\sum_{n\geq2}w_n=1$. Define, for independent $t,Y>0$,
\begin{equation}\label{eq:components}
\begin{aligned}
e_n(t,Y)&=\frac{(Y)_{n+1}}{(t)_{n+1}},
&u_n(Y)&=\frac{Y}{Y+n+1},\\[7pt]
v_n(t,Y)&=u_n(Y)-(t-Y)e_n(t,Y),
&g_n(t,Y)&=\frac{(Y)_{n+1}}{n(t)_n}.
\end{aligned}
\end{equation}
When arguments are omitted, these functions are evaluated at
$(t,Y(t))$. Beta integration in \eqref{eq:trace-integrals} and
\eqref{eq:first-variation} gives the two identities
\begin{equation}\label{eq:mean-trace}
Y'(t)=\sum_{n=2}^\infty w_ng_n,
\quad \sum_{n=2}^\infty w_nv_n=0.
\end{equation}
Also
\begin{equation}\label{eq:first-ratio}
\lambda(t,Y):=\frac{Y+t+2}{Y+3},
\quad \frac{w_3(t)}{w_2(t)}=\lambda(t,Y(t)).
\end{equation}

We record convergence needed later. On any compact subinterval of
$(0,3/2]$, the analytic functions $X,Y$ extend holomorphically to a
complex neighbourhood with $\operatorname{Re}t,
\operatorname{Re}Y>0$. The integral defining $H(t,u)$ is jointly
holomorphic there and entire in $u$. Applying Cauchy's estimates on
$|u-1|=R>1$ and on a smaller parameter neighbourhood gives
\begin{equation}\label{eq:normal-convergence}
\left|\frac{d^jc_n}{dt^j}\right|\leq C_RR^{-n},
\quad j=0,1,2.
\end{equation}
See \cite[Chapter~IV, Section~2, (2.14)]{Conway1978} for these estimates.
The beta factors and the rational factors in \eqref{eq:components},
together with their first two parameter derivatives, have at most
polynomial growth times powers of $\log n$. Thus every weighted
series differentiated below is locally absolutely and uniformly
convergent. In particular, $\sum|w_n'|<\infty$.
\section{The curvature argument}\label{sec:curvature}

We first give the argument that turns the two identities in
\eqref{eq:mean-trace} into strict curvature. The estimates used here
are proved in Sections~\ref{sec:bounds}--\ref{sec:algebra},
independently of the curvature conclusion.
Unless stated otherwise, $0<t\leq3/2$, $Y=Y(t)$, and $q=t-Y$.

The proof has three steps. Lemma~\ref{lem:curvature-reduction}
bounds $Y''(t)$ by a weighted average of the values $K_n(Y'(t))$,
with positive weights, and shows that each $K_n$ increases on the
required interval. Lemma~\ref{prop:slope-majorant} gives $Y'(t)<R$, and
Lemma~\ref{lem:negative-kernels} gives $K_n(R)<0$ for every $n\geq4$;
the functions $K_n$ and the bound $R$ are defined below.
These facts imply $Y''(t)<0$. Subsection~\ref{sec:main-proof}
combines them with Lemma~\ref{lem:contact} to prove the theorem.

\subsection{Combining the positive and negative terms}

Combine indices two and three into the averages
\begin{equation}\label{eq:block}
g_A=\frac{g_2+\lambda g_3}{1+\lambda},
\quad v_A=\frac{v_2+\lambda v_3}{1+\lambda},
\quad \lambda=\frac{Y+t+2}{Y+3}.
\end{equation}
As in \eqref{eq:components}, these are rational functions of the
independent variables $t,Y$, evaluated on the curve when the arguments
are omitted. The subscript $A$ records this two-term average.
Proposition~\ref{lem:trace-sign} and Corollary~\ref{cor:domain} give
$v_A>0$ and $v_n<0$ for $n\geq4$. Thus, with $W=w_2+w_3$,
the cancellation in \eqref{eq:mean-trace} reads
$$
Wv_A+\sum_{n=4}^\infty w_nv_n=0.
$$
It follows directly that
\begin{equation}\label{eq:trace-pairs}
\begin{aligned}
\theta_n&=w_n\frac{v_A-v_n}{v_A}>0,
&r_n&=\frac{-v_ng_A+v_Ag_n}{v_A-v_n},\\[7pt]
\sum_{n=4}^\infty\theta_n&=1,
&Y'(t)&=\sum_{n=4}^\infty\theta_nr_n.
\end{aligned}
\end{equation}
The number $r_n$ is the ordinate at $v=0$ of the segment joining
$(v_A,g_A)$ and $(v_n,g_n)$. Since $g_n$ decreases strictly with $n$,
$$
g_n<r_n<g_A<g_2,\quad n\geq4.
$$
The same pairing will be used for the second derivative.

\subsection{The negative covariance term}

In the following functions, $r$ is a real parameter and $t,Y$ are fixed. Set
\begin{equation}\label{eq:directional-components}
H_n(s)=\sum_{j=0}^n\frac1{s+j},\quad s>0,
\quad G_n(r)=g_n\{rH_n(Y)-H_{n-1}(t)\}.
\end{equation}
Thus $G_n(Y'(t))$ is the total derivative of $g_n(t,Y(t))$.
The domain bound $t\geq7Y/5$ of Corollary~\ref{cor:domain} gives
\begin{equation}\label{eq:g2-unit}
g_2\leq\frac{25(Y+1)(Y+2)}{14(7Y+5)}\leq\frac{25}{28}<1;
\end{equation}
the middle quotient increases with $Y$ on $(0,1]$.

\begin{lemma}[Curvature reduction]\label{lem:curvature-reduction}
For $0<t\leq3/2$, define along the curve
\begin{equation}\label{eq:A-block}
\begin{aligned}
D(r)&=\frac{r+1}{Y+t+2}-\frac r{Y+3},\\[7pt]
A(r)&=\frac{G_2(r)+\lambda G_3(r)-D(r)(g_2-r)}{1+\lambda}
-\frac{g_A-r}{16},\\[7pt]
K_n(r)&=\frac{-v_nA(r)+v_AG_n(r)}{v_A-v_n},\quad n\geq4.
\end{aligned}
\end{equation}
Here $D(Y'(t))=d\log\lambda(t,Y(t))/dt$. The weights $\theta_n$
in \eqref{eq:trace-pairs} are positive and sum to one. The series
$\sum_{n\geq4}\theta_nK_n(r)$ converges absolutely for $0<r<g_2$, and
\begin{equation}\label{eq:curvature-reduction}
Y''(t)\leq\sum_{n=4}^\infty\theta_nK_n(Y'(t)),
\end{equation}
and each $K_n$ is strictly increasing for $0<r<g_2$.
\end{lemma}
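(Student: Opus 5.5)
The plan is to differentiate the two identities in \eqref{eq:mean-trace} along the curve and to eliminate the unknown derivatives $w_n'$ with the same pairing that produced \eqref{eq:trace-pairs}. Positivity of $\theta_n$ and $\sum_{n\ge4}\theta_n=1$ follow from $v_A>0$, $v_n<0$ for $n\ge4$ (Proposition~\ref{lem:trace-sign}, Corollary~\ref{cor:domain}) and the vanishing sum, exactly as already noted. Absolute convergence of $\sum\theta_nK_n(r)$ for $0<r<g_2$ should follow from \eqref{eq:normal-convergence}. Indeed, $g_n$, $v_n$, $H_n$ and $G_n$ grow at most polynomially times powers of $\log n$, while $\theta_n\le w_n(v_A-v_n)/v_A$ decays geometrically. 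The denominators $v_A-v_n$ stay bounded below by $v_A>0$.

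\textbf{Differentiation.} Term-by-term differentiation is justified by \eqref{eq:normal-convergence}. It gives
$$
Y''=\sum_{n\ge2}w_n'g_n+\sum_{n\ge2}w_nG_n(Y'),\qquad
\sum_{n\ge2}w_n'v_n+\sum_{n\ge2}w_n\,\frac{dv_n}{dt}=0,\qquad \sum_{n\ge2}w_n'=0.
$$
I will treat the indices $2,3$ as a block. Write $w_2=W/(1+\lambda)$ and $w_3=W\lambda/(1+\lambda)$, and use $\lambda'/\lambda=D(Y')$ from \eqref{eq:first-ratio}. Then $\frac{d}{dt}(w_2g_2+w_3g_3)=W'g_A+W\,\frac{d g_A}{dt}$. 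Expanding $dg_A/dt$ produces $(G_2+\lambda G_3)/(1+\lambda)$ plus a term $\lambda D(Y')(g_3-g_2)/(1+\lambda)^2$. Rewriting this last term against $g_2-Y'$ is where the piece $-D(r)(g_2-r)/(1+\lambda)$ of $A(r)$ should come from.

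\textbf{Covariance term.} Using $\sum w_n'=0$, the remaining sum can be written as $\sum_{n\ge4}w_n'(g_n-Y')+W'(g_A-Y')$, a covariance between $(\log w_n)'$ and $g_n-Y'$. The ratios $w_{n+1}/w_n$ increase in $t$, i.e.\ $(\log w_n)'$ increases with $n$, while $g_n$ decreases strictly. A Chebyshev-type rearrangement inequality then shows this covariance is $\le0$. Moreover, the gap between the block $\{2,3\}$ and $n\ge4$ leaves a strictly negative quantitative remainder, which I expect to bound below by $(g_A-Y')/16$ times the block mass. I would obtain this from an explicit lower bound on $(\log w_4)'-(\log w_3)'$ using $t\le3/2$, $Y\le1$ and $t\ge7Y/5$.

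\textbf{Pairing.} What remains is to remove $W'$ and the $w_n'$ paired with $v_n$ through the differentiated trace identity. I will pair index $n$ with the block by the same segment construction as for $r_n$: the combination with weights $-v_n/(v_A-v_n)$ and $v_A/(v_A-v_n)$ kills the $v$-direction. This yields $Y''\le\sum_{n\ge4}\theta_nK_n(Y')$, which is \eqref{eq:curvature-reduction}. I expect this bookkeeping, together with proving the monotonicity of the ratios $w_{n+1}/w_n$, to be the main obstacle. For the monotonicity I would first try differentiating the integral \eqref{eq:coeff-integral} in $t$ and using zero separation (Proposition~\ref{lem:zero-separation}).

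\textbf{Monotonicity of $K_n$.} Since $-v_n/(v_A-v_n)$ and $v_A/(v_A-v_n)$ are positive and independent of $r$, it suffices that $G_n'(r)=g_nH_n(Y)>0$ and $A'(r)>0$ on $(0,g_2)$. For $A$, differentiation gives
$$
(1+\lambda)A'(r)=g_2H_2(Y)+\lambda g_3H_3(Y)-D'(r)(g_2-r)+D(r)+\tfrac{1+\lambda}{16},
$$
where $D'(r)=1/(Y+t+2)-1/(Y+3)\le0$ when $t\le1$. For $t>1$ the term $-D'(g_2-r)$ is negative, but it is at most $(t-1)/\bigl((Y+t+2)(Y+3)\bigr)$ in size because $0<g_2-r<g_2<1$ by \eqref{eq:g2-unit}. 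This is dominated by $g_2H_2(Y)\ge g_2/Y$ for $Y\le1$, $t\le3/2$. Also $D(r)\ge -r/(Y+3)>-1/(Y+3)$, which is absorbed similarly. So $A$, and hence each $K_n$, is strictly increasing on $(0,g_2)$.
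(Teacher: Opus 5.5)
Your outline follows the paper's route: write $\sum w_n'g_n$ as a covariance, take the signs from Proposition~\ref{lem:weight-ratios}, then redistribute the $\{2,3\}$ block over $n\ge4$. The gap is in the step that produces the term $-D(r)(g_2-r)$ in $A(r)$, which does not work as you describe it.

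After blocking indices $2,3$, the $\lambda'$ contribution to $W\,dg_A/dt$ is exactly $-WD(Y')(g_2-g_A)/(1+\lambda)$. Since $D(Y')>0$ and $g_A>Y'$, this exceeds the required $-WD(Y')(g_2-Y')/(1+\lambda)$ by $WD(Y')(g_A-Y')/(1+\lambda)>0$. No rewriting can turn one into the other inside an upper bound, so the deficit must come from the covariance.

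The block's logarithmic derivative is $\sigma_W=(\sigma_2+\lambda\sigma_3)/(1+\lambda)$. Hence the block-to-tail gap is $\sigma_4-\sigma_W=\delta_3+\delta_2/(1+\lambda)$, with $\delta_2=D(Y')$ exactly by \eqref{eq:first-ratio}. You propose to bound this gap only through $\delta_3>1/16$. That yields the inequality with $A(r)$ replaced by $A(r)+D(r)(g_A-r)/(1+\lambda)=G_A(r)-(g_A-r)/16$. This is weaker than the lemma, and the later comparison $A<B$ fails for it because $1/16<3/20$.

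The paper avoids this by not blocking before the covariance step. It writes $\operatorname{Cov}_w(g,\sigma)=-\sum_k\delta_kA_k$ with $A_k=\sum_{n\le k}w_n(g_n-Y')>0$. It then keeps both $\delta_2A_2=D(Y')w_2(g_2-Y')$ and $\delta_3A_3>W(g_A-Y')/16$.

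\textbf{Pairing step.} Once the covariance is bounded, no $w_n'$ remain, so the differentiated trace identity plays no role. Using it would bring in $dv_n/dt$ terms that do not appear in $K_n$. Instead, insert $W=-\sum_{n\ge4}w_nv_n/v_A$, from the undifferentiated identity, into $WA(Y')$. This gives $\sum_{n\ge4}w_n\{G_n-v_nA/v_A\}=\sum_{n\ge4}\theta_nK_n$.

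\textbf{Monotonicity step.} You have the sign of $D'(r)=1/(Y+t+2)-1/(Y+3)$ reversed: it is nonnegative for $t\le1$. So the possibly negative term $-D'(r)(g_2-r)$ arises for $t<1$, where $g_2(H_2(Y)-D'(r))+D'(r)\,r>0$ disposes of it. Your magnitude estimate happens to survive the mix-up. Also, $D(r)>0$ outright, since its numerator is $Y+3+r(1-t)$.

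\textbf{The bound $\delta_3>1/16$.} It does not follow from $t\le3/2$, $Y\le1$, $t\ge7Y/5$ alone. The ratio $w_4/w_3$ depends on the Bessel zero $X(t)$, so $\delta_3$ involves $X$ and $X'$. Cite Proposition~\ref{lem:weight-ratios} for this, as for $\delta_n>0$.
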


\begin{proof}
Put $W=w_2+w_3=(1+\lambda)w_2$. Write $\sigma_n=w_n'/w_n$ and
$\delta_n=\sigma_{n+1}-\sigma_n$. The covariance of two sequences
with respect to the weights $w_n$ is
$$
\operatorname{Cov}_w(f,h)=\sum_{n\geq2}w_nf_nh_n
-\left(\sum_{n\geq2}w_nf_n\right)
 \left(\sum_{n\geq2}w_nh_n\right),
$$
whenever these sums are absolutely convergent. Differentiating
\eqref{eq:mean-trace} gives
\begin{equation}\label{eq:curvature-identity}
Y''(t)=\sum_{n\geq2}w_nG_n(Y'(t))+\operatorname{Cov}_w(g,\sigma).
\end{equation}
Absolute convergence follows from \eqref{eq:normal-convergence}:
$g_n$ is bounded and $\sum w_n|\sigma_n|=\sum|w_n'|<\infty$.
The identity
$\theta_nK_n(r)=w_n\{G_n(r)-v_nA(r)/v_A\}$ and the same estimates
give absolute convergence for every $r$ with $0<r<g_2$.
For
$$
A_k=\sum_{n=2}^kw_n(g_n-Y'(t))
=\sum_{i=2}^k\sum_{j=k+1}^\infty w_iw_j(g_i-g_j)>0,
$$
expanding the definition of covariance gives
\begin{equation}\label{eq:covariance-sum}
\operatorname{Cov}_w(g,\sigma)
=-\sum_{i<j}w_iw_j(g_i-g_j)(\sigma_j-\sigma_i)
=-\sum_{k=2}^\infty\delta_kA_k.
\end{equation}
For the last equality use
$\sigma_j-\sigma_i=\sum_{k=i}^{j-1}\delta_k$ and interchange the
nonnegative sums using \cite[Theorem~2.37(a)]{Folland1999};
all summands after the minus sign are positive by
Proposition~\ref{lem:weight-ratios}. This argument avoids any boundary term
in an infinite summation by parts.

The bound $\delta_3>1/16$ retains a uniform part of
the negative covariance; all later terms have the same sign and
may be discarded in an upper bound. Retaining $k=2,3$ and using
$A_2=w_2(g_2-Y'(t))$ and $A_3=W(g_A-Y'(t))$ gives
$$
Y''(t)\leq WA(Y'(t))+\sum_{n=4}^\infty w_nG_n(Y'(t)).
$$
The cancellation in \eqref{eq:mean-trace} and \eqref{eq:trace-pairs}
turn the right side
into \eqref{eq:curvature-reduction}.

For monotonicity, let
$d_1=(Y+t+2)^{-1}-(Y+3)^{-1}$. Direct differentiation gives
$$
A'(r)=\frac{g_2H_2(Y)+\lambda g_3H_3(Y)+D(r)-d_1(g_2-r)}{1+\lambda}
+\frac1{16}>0.
$$
Indeed, $D(r)>0$ on this interval. If $t\geq1$,
$d_1\leq0$; if $t<1$, then $0<d_1<H_2(Y)$ and
$g_2H_2(Y)-d_1(g_2-r)=g_2(H_2(Y)-d_1)+d_1r>0$.
Finally
$$
K_n'(r)=\frac{(-v_n)A'(r)+v_Ag_nH_n(Y)}{v_A-v_n}>0.
$$
\end{proof}

\subsection{One comparison for the slope and the curvature}

The form of a common upper bound follows from the pairing:
$$
r_n=g_A-v_A\frac{g_A-g_n}{v_A-v_n},\quad n\geq4.
$$
A positive common strict lower bound $\kappa$ for these secant slopes gives
$r_n<R=g_A-\kappa v_A$, hence $Y'(t)<R$.
Geometrically, the line $g=R+\kappa v$ passes through $(v_A,g_A)$
and lies above every point $(v_n,g_n)$ with $n\geq4$.
Since each $K_n$ increases with its argument, the same $R$ can then
be used in all curvature comparisons. We use the explicit quadratic
choice
\begin{equation}\label{eq:common-slope}
\kappa(Y,q)=3-6Y+20q-\frac83Y(1-Y),
\quad R(Y,q)=g_A-\kappa(Y,q)v_A,
\quad t=Y+q.
\end{equation}
When $q\geq2Y/5$, one has
$\kappa-71/24\geq(8Y-1)^2/24\geq0$.
For independent $t,Y$, direct simplification gives
\begin{equation}\label{eq:gA-product}
g_A(t,Y)=\frac{Y(Y+1)(Y+2)(Y+3)(5t+2Y+10)}
{6t(t+1)(t+2)(t+2Y+5)}.
\end{equation}
Let $G_A(r)=(\partial/\partial t+r\,\partial/\partial Y)g_A(t,Y)$.
Equivalently,
\begin{equation}\label{eq:gA-direction}
G_A(r)=g_A\left\{
r\sum_{j=0}^3\frac1{Y+j}+\frac{5+2r}{5t+2Y+10}
-\sum_{j=0}^2\frac1{t+j}-\frac{1+2r}{t+2Y+5}\right\}.
\end{equation}
The partial derivatives here are taken before substituting $t=Y+q$.
For the curvature inequalities it is useful to replace the quadratic
function $A(r)$ by an affine upper bound. Put
\begin{equation}\label{eq:B-F}
B(r)=G_A(r)-\frac3{20}(g_A-r),
\quad F_n(r)=H_{n-1}(t)-rH_n(Y).
\end{equation}
As proved below, $A(r)<B(r)$ on the required interval. The constant
$3/20$ preserves a definite part of the negative covariance while
removing the quadratic dependence on $r$; no sharp constant is needed.
We also use the rational functions
\begin{equation}\label{eq:eta-D}
\eta_n=-\frac{v_n}{g_n},
\quad D_n=(t+n)(\eta_{n+1}-\eta_n),\quad n\geq4,
\end{equation}
and put $U(Y)=3Y/7+3Y^2/14$.

\smallskip
\noindent\emph{The eight inequalities.}
The analytic argument requires the following eight rational
inequalities. The first four control the secant slopes; the last four
control curvature. The arguments below show why these initial
comparisons suffice for every index. Section~\ref{sec:algebra} proves
them on an independent parameter domain; positivity of $v_A$ is
required only on the curve.

\begin{proposition}[Finite rational inequalities]\label{lem:finite}
For independent real $Y,q$ satisfying
\begin{equation}\label{eq:closed-domain}
0<Y\leq1,\quad \frac{2Y}{5}\leq q\leq\frac Y2,
\quad Y-q\leq\frac12,\quad q\leq U(Y),
\end{equation}
the following eight inequalities hold:
\begin{align}
g_A-g_m-\kappa(v_A-v_m)&>0,\quad m=4,5,6,
\label{eq:slope-head}\\[7pt]
g_6-g_7-\kappa(v_6-v_7)&>0,
\label{eq:slope-edge}\\[7pt]
v_AF_4(R)-\eta_4B(R)&>0,
\label{eq:curvature-head}\\[7pt]
v_A(1-R)-B(R)D_m&>0,\quad m=4,5,6.
\label{eq:curvature-edge}
\end{align}
All functions are the rational expressions in
\eqref{eq:components}, \eqref{eq:block}, \eqref{eq:eta-D}, and
\eqref{eq:directional-components}--\eqref{eq:B-F}, with $t=Y+q$.
\end{proposition}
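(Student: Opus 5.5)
We will treat each of the eight inequalities as a statement about a single rational function of the two independent variables $(Y,q)$ on the compact region described by \eqref{eq:closed-domain}, with $t=Y+q$. The first step is to clear denominators. Every denominator occurring in \eqref{eq:components}, \eqref{eq:block}, \eqref{eq:eta-D} and \eqref{eq:directional-components}--\eqref{eq:B-F} is a product of linear factors such as $t+j$, $Y+j$, $Y+t+2$, $Y+3$, $5t+2Y+10$ and $t+2Y+5$. All of these are positive on the domain, since $Y>0$ and $q\geq 2Y/5>0$. Multiplying by an explicit positive product therefore turns each inequality into the claim $N_i(Y,q)>0$ for a polynomial $N_i$ with rational coefficients.

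The one exception is the pair of terms $\eta_4$ and $D_m$, whose denominators contain $g_n$. These are harmless, because $g_n>0$.

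The quantity $R$ enters \eqref{eq:curvature-head} and \eqref{eq:curvature-edge} through $F_4(R)$, $B(R)$ and $1-R$. Since $R=g_A-\kappa v_A$ is itself rational and $B$ and $F_n$ are affine in $r$, substituting $R$ keeps everything polynomial after clearing denominators. The harmonic sums $H_n$ are finite, so no transcendental terms appear. All of these computations are exact symbolic algebra, as in Appendix~\ref{app:mathematica}.

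The second step is to reduce the two-dimensional positivity of each $N_i$ to a one-dimensional problem. The domain is a curvilinear polygon. It is bounded by the lines $q=2Y/5$, $q=Y/2$ and $Y-q=1/2$, together with the parabola $q=U(Y)$ and the degenerate corner $Y\to0$.

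For fixed $Y$, we try to show that $N_i$ is monotone or concave in $q$ on the admissible interval $[q_-(Y),q_+(Y)]$. Concavity can be checked by showing that $\partial_q^2N_i$ has coefficients of one sign after a substitution such as $q=2Y/5+s\,Y/10$ with $s\in[0,1]$. Monotonicity needs a sign condition on $\partial_qN_i$. Either property reduces positivity to the boundary curves $q=q_\pm(Y)$. On each boundary piece we obtain a univariate polynomial in $Y$, which we parametrize over $[0,1]$ or over the relevant subinterval determined by where $Y-q\le 1/2$ and $q\le U(Y)$ become active. We then prove positivity either by showing all coefficients are positive in a shifted variable, or by Bernstein expansion on the subinterval, where positive Bernstein coefficients suffice.

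Near $Y=0$ the inequalities degenerate, because $g_n$, $v_n$ and $v_A$ all vanish to some order in $Y$. We therefore first factor out the appropriate power of $Y$ and prove positivity of the quotient, which must be strictly positive at $Y=0$.

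The main obstacle will be the four curvature inequalities \eqref{eq:curvature-head}--\eqref{eq:curvature-edge}. After substituting $R$ they have large degree, and their margins are small. This is especially true near the corner where $q=U(Y)$ meets $q=Y/2$, and near $Y=1$, $q=1/2$, which corresponds to $t=3/2$, the Bessel case $\beta(1/2)=-1/2$.

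If plain coefficient positivity fails there, we would subdivide the $Y$-range into a few dyadic subintervals and apply Bernstein degree elevation on each. We would also use $\kappa-71/24\ge(8Y-1)^2/24$ to replace $\kappa$ by simpler bounds in the terms where it enters with a definite sign, and use the domain constraint $q\le U(Y)$ only on the pieces where it is actually binding.

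By contrast, the slope inequalities \eqref{eq:slope-head}--\eqref{eq:slope-edge} should follow more directly. They are linear in $\kappa$, so the $q$-derivative sign should be transparent there.
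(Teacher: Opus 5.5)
Your plan is essentially the paper's proof: clear the positive linear-factor denominators and the excluded factor $Y$, prove strict concavity in $q$ of the curvature polynomials via the substitution $q=Y(2/5+s/10)$ with Bernstein expansion in $Y$, and then certify positivity on the four boundary arcs $q=2Y/5$, $q=U(Y)$, $q=Y/2$, $q=Y-1/2$ by exact positive Bernstein coefficients. The one place your expectation is off is the slope side. Your ``linear in $\kappa$'' heuristic ignores that $\kappa$, $t$ and the clearing factor all depend on $q$. For $S_4,S_5,S_6$ the paper does not rely on plain monotonicity or concavity: it proves $\partial_q^3S_m>0$ and checks the sign of $\partial_qS_m$ (for larger $Y$) or $\partial_q^2S_m$ (for smaller $Y$) at $q=h(Y)$, which forces the minimum to an endpoint. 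The edge inequality is settled separately, by a direct two-variable Bernstein bound on the whole wedge $2Y/5\le q\le Y/2$.
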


\smallskip
\noindent\emph{Applying the inequalities.}
The next two lemmas use these finite comparisons to bound the slope
and then make every term in the curvature estimate negative.

\begin{lemma}[Upper bound for the slope]\label{prop:slope-majorant}
Along the curve, for every $0<t\leq3/2$,
\begin{equation}\label{eq:slope-majorant}
0<Y'(t)<R(Y(t),q(t))<g_A<g_2<1.
\end{equation}
\end{lemma}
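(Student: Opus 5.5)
We prove the chain in \eqref{eq:slope-majorant} from right to left. The inequality $g_2<1$ is \eqref{eq:g2-unit}. Since $g_A$ is a convex combination of $g_2$ and $g_3$ with positive weights, and $g_n$ decreases strictly in $n$, we get $g_A<g_2$. Next, $v_A>0$ on the curve (Proposition~\ref{lem:trace-sign}). On the curve $(Y,q)$ lies in the domain \eqref{eq:closed-domain} (Corollary~\ref{cor:domain}), where $\kappa\geq71/24>0$. Hence $R=g_A-\kappa v_A<g_A$. Finally $Y'(t)>0$ is part of Lemma~\ref{lem:contact}. So the content of the lemma is $Y'(t)<R$.

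For this we use the pairing \eqref{eq:trace-pairs}: $Y'(t)=\sum_{n\geq4}\theta_nr_n$ with $\theta_n>0$ and $\sum\theta_n=1$. It suffices to show $r_n<R$ for every $n\geq4$, with at least one strict inequality. The identity $r_n=g_A-v_A(g_A-g_n)/(v_A-v_n)$, together with $v_A>0$ and $v_A-v_n>0$ (since $v_n<0$), shows that $r_n<R$ is equivalent to
$$
g_A-g_n-\kappa(v_A-v_n)>0,\quad n\geq4.
$$
Geometrically, every point $(v_n,g_n)$ lies strictly below the line through $(v_A,g_A)$ with slope $\kappa$. For $n=4,5,6$ this is exactly \eqref{eq:slope-head}, applied at $(Y(t),q(t))$ in the domain.

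For $n\geq7$ we induct with \eqref{eq:slope-edge} as the base step. Write
$$
g_A-g_n-\kappa(v_A-v_n)=\bigl[g_A-g_6-\kappa(v_A-v_6)\bigr]+\sum_{k=6}^{n-1}\bigl[g_k-g_{k+1}-\kappa(v_k-v_{k+1})\bigr].
$$
It then suffices to show each increment $\Delta_k=g_k-g_{k+1}-\kappa(v_k-v_{k+1})$ is nonnegative for $k\geq6$. The case $k=6$ is \eqref{eq:slope-edge}. For larger $k$, the plan is to show that the secant slopes $(g_k-g_{k+1})/(v_k-v_{k+1})$ increase with $k$, i.e.\ that the points $(v_n,g_n)$, $n\geq6$, form a convex chain moving left and down. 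Concretely:
\begin{itemize}
\item $g_{k+1}/g_k=(Y+k+1)k/\{(k+1)(t+k)\}$, computed directly from \eqref{eq:components}.
\item $v_k=u_k-q e_k$, with $e_{k+1}/e_k=(Y+k+1)/(t+k+1)$.
\end{itemize}
Both differences are then explicit one-step expressions. We expect $v_k-v_{k+1}<0$, and a sign analysis of $\Delta_{k+1}$ against $\Delta_k$ should reduce to a rational inequality in $k$, $Y$, $q$ that is monotone in $k$. These are the estimates promised in Sections~\ref{sec:bounds}--\ref{sec:trace}. If the direct monotonicity fails, the fallback is a crude bound: for large $k$, $g_k=O(k^{Y-t})$ decays while $v_k\to Y/(Y+k+1)\cdot(\ldots)$. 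We would then show $g_A-g_n-\kappa(v_A-v_n)\geq g_A-\kappa v_A-\sup_{n\geq7}(g_n-\kappa v_n)$, with $g_n-\kappa v_n$ eventually decreasing.

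The main obstacle is this tail step: extending the finite comparison at $n=6,7$ to all $n$ uniformly in the two-parameter domain. Everything else is bookkeeping with results already stated.
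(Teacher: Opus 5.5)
Your reduction is the paper's. It comprises the chain $R<g_A<g_2<1$ from $\kappa,v_A>0$ and \eqref{eq:g2-unit}, the averaging \eqref{eq:trace-pairs}, and the equivalence of $r_n<R$ with $g_A-g_n-\kappa(v_A-v_n)>0$. It also uses \eqref{eq:slope-head} for $n=4,5,6$ and telescopes from $n=6$ with \eqref{eq:slope-edge} as the first increment.

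\textbf{The gap is the tail.} You leave it as a plan, and the plan misjudges the signs. Write $d_k=g_k-g_{k+1}>0$ and $h_k=v_k-v_{k+1}$, so your increment is $\Delta_k=d_k-\kappa h_k$.
\begin{itemize}
\item If $h_k<0$ held for all $k\geq6$, as you expect, then $\Delta_k>d_k>0$ trivially. In that case \eqref{eq:slope-edge} would be superfluous.
\item That expectation fails on the curve itself. At $t=3/2$, where $Y=1$ and $q=1/2$, one has $e_6=7!/(3/2)_7\approx0.318$ and $h_6=1/72-e_6/34\approx0.0045>0$. Moreover $h_k$ stays positive until about $k=17$ and only then turns negative.
\item The monotonicity of $d_k/h_k$ you propose therefore cannot hold for all $k\geq6$. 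It breaks where $h_k$ changes sign, since the ratio drops there from large positive to negative values.
\item Your asymptotic fallback gives no uniform control in $(Y,q)$ over the intermediate indices.
\end{itemize}

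\textbf{What is missing} is the case split that Proposition~\ref{lem:increment-ratio} supplies.
\begin{itemize}
\item Sign persistence: $h_k=Y(1-z_k)/\{(Y+k+1)(Y+k+2)\}$ with $z_{k+1}=\rho_kz_k$ and $\rho_k>1$. So once $h_k\leq0$ it stays nonpositive, and then $\Delta_k\geq d_k>0$.
\item Ratio monotonicity: for $k\geq6$ with $h_k,h_{k+1}>0$, one has $d_{k+1}/h_{k+1}>d_k/h_k$.
\end{itemize}
With these two facts the argument closes. If $h_6\leq0$, every increment is positive. If $h_6>0$, \eqref{eq:slope-edge} gives $d_6/h_6>\kappa$. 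Ratio monotonicity propagates $d_k/h_k>\kappa$ while $h_k>0$, and sign persistence covers all later indices. Hence $g_n-\kappa v_n$ decreases strictly for $n\geq6$. Together with \eqref{eq:slope-head}, this yields $R>g_n-\kappa v_n$ for every $n\geq4$.

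The ratio monotonicity is a genuine estimate, not bookkeeping, so it cannot be left as an expectation. For $k\geq9$ the paper proves it by convexity in $a=q/Y$. For $6\leq k\leq8$ it uses $v_k<0$ to bound $z_k$ from below, then the monotonicity in $Y$ of an explicit function $\Psi_k$, then a check at $Y=1$.
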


\begin{proof}
The domain is given by Corollary~\ref{cor:domain}.
Use $d_n,h_n$ from \eqref{eq:increments}. If $h_6\leq0$, then
$d_n-\kappa h_n>0$ for every $n\geq6$.
If $h_6>0$, \eqref{eq:slope-edge} gives $d_6/h_6>\kappa$;
Proposition~\ref{lem:increment-ratio} propagates this until the increments
become nonpositive, after which the preceding observation applies.
Thus $g_n-\kappa v_n$ decreases strictly for $n\geq6$.
The three signs \eqref{eq:slope-head} imply
$$
R>g_n-\kappa v_n,\quad n\geq4.
$$
Consequently
$$
R-r_n=\frac{v_A}{v_A-v_n}
\{g_A-g_n-\kappa(v_A-v_n)\}>0.
$$
Taking the average in \eqref{eq:trace-pairs} proves $Y'(t)<R$.
Since $\kappa,v_A>0$, $R<g_A<g_2$.
The last bound is \eqref{eq:g2-unit}.
\end{proof}

\begin{lemma}[Negativity of $K_n$]\label{lem:negative-kernels}
At every point of the curve with $0<t\leq3/2$, put
$R=R(Y(t),q(t))$. Then
$$
K_n(R)<0,\quad n\geq4.
$$
\end{lemma}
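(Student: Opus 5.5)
The sign of $K_n(R)$ equals the sign of its numerator, because $v_A-v_n>0$ for $n\geq4$ (Proposition~\ref{lem:trace-sign}, Corollary~\ref{cor:domain}). I therefore aim to show
$$
N_n:=-v_nA(R)+v_AG_n(R)<0,\quad n\geq4.
$$
The first step replaces $A$ by the affine majorant $B$. By Lemma~\ref{prop:slope-majorant}, $0<R<g_A<g_2<1$, so $R$ lies in the interval where the comparison $A(r)<B(r)$ announced after \eqref{eq:B-F} is to hold; I would prove that comparison at this point if it is not already available. Since $-v_n>0$, this gives $N_n<-v_nB(R)+v_AG_n(R)$. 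Writing $G_n(R)=-g_nF_n(R)$ and $-v_n=\eta_ng_n$, and dividing by $g_n>0$, it suffices to prove
$$
\Phi_n:=v_AF_n(R)-\eta_nB(R)>0,\quad n\geq4.
$$
For $n=4$ this is exactly \eqref{eq:curvature-head}, once Corollary~\ref{cor:domain} places $(Y(t),q(t))$ in the domain \eqref{eq:closed-domain}.

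The remaining indices follow by induction on $n$ using increments. From \eqref{eq:directional-components},
$$
F_{n+1}(R)-F_n(R)=\frac1{t+n}-\frac R{Y+n+1}>\frac{1-R}{t+n},
$$
where the inequality uses $t\leq Y+n+1$ (indeed $t<Y+1$ on the domain). Together with $\eta_{n+1}-\eta_n=D_n/(t+n)$ and $v_A>0$, this gives
$$
\Phi_{n+1}-\Phi_n>\frac{v_A(1-R)-B(R)D_n}{t+n}.
$$
For $n=4,5,6$ the numerator is positive by \eqref{eq:curvature-edge}. Hence $\Phi_5,\Phi_6,\Phi_7>\Phi_4>0$.

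For $n\geq7$ I split by the sign of $B(R)$. If $B(R)\leq0$, then $-\eta_nB(R)\geq0$ because $\eta_n>0$. Also $F_n(R)>0$ for all $n\geq4$: since $F_n$ increases in $n$, it suffices that $F_4(R)>0$, which holds because $R<1$ and $t\leq Y+1$. So $\Phi_n>0$ directly. If $B(R)>0$, the increment is positive as long as $D_n\leq D_6$, because $v_A(1-R)>B(R)D_6$. So the key tail fact is that $D_n$ is nonincreasing, or at least bounded by $\max(D_4,D_5,D_6)$, for $n\geq6$. I would derive this from the explicit rational form
$$
\eta_n=\frac{n(t)_n}{(Y)_{n+1}}\Bigl(t-Y-\frac{Y}{Y+n+1}\cdot\frac{(t)_{n+1}}{(Y)_{n+1}}\Bigr)^{-1}\cdots
$$
or more usefully $\eta_n=n(t)_n\{(t-Y)e_n-u_n\}/(Y)_{n+1}$, analysed with the increment-ratio techniques of Proposition~\ref{lem:increment-ratio} and the Section~\ref{sec:trace} estimates. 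Where $D_n\leq0$, the increment bound is trivially positive.

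I expect the main obstacle to be this tail monotonicity of $D_n$, uniformly over the domain. The ratio $(t)_n/(Y)_{n+1}$ grows like $n^{q-1}$ times a constant, so $\eta_n$ behaves like $c\,n^{q}$ and $D_n\approx(t+n)c\,q\,n^{q-1}\sim cq\,n^{q}$. This may actually grow, although slowly, since $q\leq1/2$. If so, a single comparison with $D_6$ is insufficient. My fallback is then to compare growth rates directly: $\Phi_n\geq v_AF_n(R)-\eta_nB(R)$, where $F_n(R)\sim(1-R)\log n$, while $\eta_n$ grows like $n^{q}$. In that case I would use instead the ratio of increments, $D_n/(1-R)$ against $D_6/(1-R)$, propagated by the ratio argument of Proposition~\ref{lem:increment-ratio}, mirroring the proof of Lemma~\ref{prop:slope-majorant}.
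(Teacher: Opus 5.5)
Your reduction to $\Phi_n=v_AF_n(R)-\eta_nB(R)>0$, the case $n=4$, and the increment bound $\Phi_{n+1}-\Phi_n>\{v_A(1-R)-B(R)D_n\}/(t+n)$ are correct and follow the paper's skeleton. The one variation is harmless. You keep the affine majorant, so \eqref{eq:curvature-head}--\eqref{eq:curvature-edge} apply verbatim. The paper keeps $A$ in $L_n=\eta_nA(R)-v_AF_n(R)$ and transfers the finite signs using $\eta_4>0$ and $D_m>0$.

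\textbf{Main gap: the tail.} For $B(R)>0$ you need $D_n\le\max(D_4,D_5,D_6)$ for all $n\ge7$. For $B(R)\le0$ you need $D_n\ge0$. Both are exactly Proposition~\ref{lem:curvature-increments} ($\eta_n$ strictly increasing, $D_n>0$, $D_{n+1}<D_n$ for $n\ge6$). You neither cite nor prove it, and your doubt about it rests on a wrong asymptotic. With $b_n=u_n/e_n=\frac{Y}{Y+n+1}\frac{(t)_{n+1}}{(Y)_{n+1}}$ one has $\eta_n=n(q-b_n)/(t+n)$. The Pochhammer ratio grows like $n^{q}$, but it carries the factor $Y/(Y+n+1)$, so $b_n=O(n^{q-1})\to0$. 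Hence $\eta_n$ increases to the finite limit $q$. By \eqref{eq:D-difference}, $D_n=\frac{qt}{t+n+1}+b_n\frac{(1-q)n-t}{Y+n+2}\to0$, so nothing grows.

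Your fallback cannot repair this. If $\eta_n\sim cn^{q}$ held and $B(R)>0$, it would force $\Phi_n\to-\infty$, i.e.\ refute the lemma. With the correct asymptotics it only gives positivity beyond a point-dependent index, not for all $n\ge7$ uniformly along the curve. The missing step is the paper's. By \eqref{eq:D-difference}, $D_n-D_{n+1}>0$ is immediate when the quantity $N_n$ defined there is nonnegative. Otherwise the bound $b_n\le b_6<4q/5$ for $n\ge6$ reduces it to positivity of the polynomial $V_n(Y,q)$, which is concave in $Y$ and is checked at $Y=0$ and $Y=1$.

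\textbf{Branch $B(R)\le0$.} Your claim that $F_4(R)>0$ ``because $R<1$ and $t\le Y+1$'' does not follow. Since $t\ge Y$, $H_3(t)<H_4(Y)$, so $F_4(r)<0$ for $r$ close to $1$. Drop that claim and use your own increment bound: if $B(R)\le0$ and $D_n>0$, then $\Phi_{n+1}-\Phi_n>v_A(1-R)/(t+n)>0$. So $\Phi_n$ increases from $\Phi_4>0$, which is the paper's argument.

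\textbf{The comparison $A(R)<B(R)$.} This is not proved elsewhere; the paper proves it inside this lemma, so your proof must too. Because $g_A=(g_2+\lambda g_3)/(1+\lambda)$ and $D(r)=(\partial_t+r\,\partial_Y)\log\lambda$, one gets the identity $A(r)=G_A(r)-\{D(r)/(1+\lambda)+1/16\}(g_A-r)$. For $0<r<1$ one has $D(r)/(1+\lambda)=\frac{Y+3+r(1-t)}{(Y+t+2)(2Y+t+5)}>\frac{14}{153}$. Since $14/153+1/16-3/20=49/12240>0$ and $R<g_A$, this gives $A(R)<B(R)$.
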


\begin{proof}
Corollary~\ref{cor:domain} and Proposition~\ref{lem:trace-sign} give
$v_A>0$ and $v_n<0$ for $n\geq4$.
Lemma~\ref{prop:slope-majorant} gives $0<R<g_A<g_2<1$.
First observe the exact identity
$$
A(r)=G_A(r)-\left\{\frac{D(r)}{1+\lambda}+\frac1{16}\right\}(g_A-r).
$$
For $0<r<1$, $0<t\leq3/2$, $0<Y\leq1$,
$$
\frac{D(r)}{1+\lambda}
=\frac{Y+3+r(1-t)}{(Y+t+2)(2Y+t+5)}
>\frac{Y+5/2}{(Y+7/2)(2Y+13/2)}\geq\frac{14}{153}.
$$
The last inequality is the identity
$$
153(Y+5/2)-14(Y+7/2)(2Y+13/2)=4(1-Y)(7Y+16)\geq0.
$$
Since $14/153+1/16-3/20=49/12240>0$, this proves the affine comparison
\begin{equation}\label{eq:A-B}
A(r)<B(r),\quad 0<r<\min(1,g_A).
\end{equation}

Set $L_n=\eta_nA(R)-v_AF_n(R)$.
By Proposition~\ref{lem:curvature-increments}, $\eta_4>0$
and $D_m>0$ for $m=4,5,6$. Hence the finite signs
\eqref{eq:curvature-head}--\eqref{eq:curvature-edge}, together with
\eqref{eq:A-B}, imply
$$
L_4<0,\quad v_A(1-R)-A(R)D_m>0,\quad m=4,5,6.
$$
Also
$$
F_{n+1}(R)-F_n(R)=\frac1{t+n}-\frac R{Y+n+1}
>\frac{1-R}{t+n}>0.
$$
If $A(R)\leq0$, the increase of $\eta_n$ and the strict increase
of $F_n(R)$ show that $L_n$ decreases strictly. If $A(R)>0$,
Proposition~\ref{lem:curvature-increments} gives
$D_n\leq\max(D_4,D_5,D_6)$ for every $n\geq4$, whence
$$
v_A\{F_{n+1}(R)-F_n(R)\}
-A(R)(\eta_{n+1}-\eta_n)
>\frac{v_A(1-R)-A(R)D_n}{t+n}>0.
$$
Thus $L_n<0$ for every $n\geq4$. Since $G_n(R)=-g_nF_n(R)$ and
$v_n=-g_n\eta_n$, it follows that
$K_n(R)=g_nL_n/(v_A-v_n)<0$.
\end{proof}

\subsection{Proof of the main theorem}\label{sec:main-proof}

\begin{proof}[Proof of Theorem~\ref{thm:main}]
\emph{Existence and regularity.}
For $t=\alpha+1$, the substitution $b=t-2Y$ maps $Y>0$
bijectively onto $b<\alpha+1$ and turns \eqref{eq:contact-bessel}
into $I_{\alpha,b}(j_{\alpha,2})=0$.
Lemma~\ref{lem:contact} gives existence, uniqueness, and analyticity;
$Y(3/2)=1$ gives $\beta(1/2)=-1/2$.

\emph{Strict curvature.}
Fix $0<t\leq3/2$ and write $R=R(Y(t),q(t))$.
Lemma~\ref{prop:slope-majorant} gives $0<Y'(t)<R<g_2$.
Lemma~\ref{lem:curvature-reduction} gives the reduction and strict
increase of $K_n$; Lemma~\ref{lem:negative-kernels} gives $K_n(R)<0$.
Consequently,
$$
Y''(t)\leq\sum_{n=4}^\infty\theta_nK_n(Y'(t))
<\sum_{n=4}^\infty\theta_nK_n(R)<0.
$$
The series converge absolutely and $\theta_n>0$, so the strict
inequalities are preserved on summation. All three lemmas include
$t=3/2$. By \eqref{eq:curvature-conversion},
$$
\beta''(\alpha)=-2Y''(\alpha+1)>0,
\quad -1<\alpha\leq\frac12.
$$

\emph{The left endpoint.}
Since $0<Y(t)<t$, one has $|\beta(t-1)|<t$, giving the continuous
extension $\beta(-1)=0$. Passing to the limit in the convexity
inequality proves convexity on $[-1,1/2]$. Equality at an interior
point of a chord would force affinity on that segment, contrary to
$\beta''(\alpha)>0$ in its interior. The extension is therefore
strictly convex.
\end{proof}

It remains to justify the estimates invoked above.
Section~\ref{sec:bounds} bounds the Bessel zero and the exponent.
Section~\ref{sec:weight-ratios} proves the weight comparisons used
in Lemma~\ref{lem:curvature-reduction}.
Section~\ref{sec:trace} establishes the signs, the parameter domain,
and the comparisons between consecutive terms needed in
Lemmas~\ref{prop:slope-majorant} and~\ref{lem:negative-kernels}.
Section~\ref{sec:algebra} proves Proposition~\ref{lem:finite}, with
the exact polynomial calculations given in Appendix~\ref{app:mathematica}.
None of these arguments assumes the curvature conclusion.

\section{Analytic bounds for the Bessel zero and the exponent}
\label{sec:bounds}

We need bounds for the moving Bessel zero and for the position and slope
of the curve. The zero estimates enter both the slope argument
below and the variation of the weights in
Proposition~\ref{lem:weight-ratios}. The bounds for the curve will restrict the
parameters in the rational inequalities.

\subsection{Bounds for the Bessel zero}

\begin{proposition}\label{lem:bessel-bounds}
For $0<t\leq3/2$, put $x(t)=j_{t-1,2}$,
$X(t)=x(t)^2/4$, and $c(t)=X'(t)/X(t)$.  Then
\begin{equation}\label{eq:bessel-bounds}
\frac{17}{5}<X(t)<4(t+1),\quad
\frac{188}{259+94t}<c(t)<\frac2{t+2}.
\end{equation}
The function $X$ extends analytically to $t=0$, with
$X(0)=j_{1,1}^2/4$ and $c(0)<1$.  Moreover,
\begin{equation}\label{eq:left-bessel-value}
J_0(j_{1,1})<-\frac25.
\end{equation}
\end{proposition}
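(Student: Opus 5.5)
We have to prove four things: the bounds $17/5<X(t)<4(t+1)$, the bounds on $c=X'/X$, the analytic extension of $X$ to $t=0$ with $c(0)<1$, and the numerical bound $J_0(j_{1,1})<-2/5$. The plan is to get $X$ from classical monotonicity of zeros together with Rayleigh-type sums, and $c$ from a derivative formula for the zero.

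\emph{Bounds on $X$.} Put $x=j_{t-1,2}$.

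For the upper bound, the interlacing used in Proposition~\ref{lem:zero-separation} gives $x<j_{t,2}$. It then suffices to show $j_{t,2}^2<16(t+1)$ for $0\le t\le 3/2$. I would get this from a known explicit upper bound on zeros, or from a convexity argument for $j_{\nu,2}^2$ in $\nu$. Checking the endpoints:
\begin{itemize}
\item at $t=0$, $j_{0,2}^2\approx 30.5<16$ fails, so this route is wrong as stated;
\item the correct comparison is directly $x=j_{t-1,2}$. At $t=0$, $j_{-1,2}=j_{1,1}\approx3.83$ gives $X\approx3.67<4$.
\item At $t=3/2$, $X=\pi^2<10$.
\end{itemize}
So I would use that $j_{\nu,2}^2$ is convex in $\nu$ (a classical result, e.g.\ of Elbert--Laforgia for $\nu>-1$, extended by analytic continuation). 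A convex function lies below its chord, and the chord values $3.67$ and $\pi^2$ are below the linear function $4(t+1)$ at both endpoints $t=0$ and $t=3/2$, giving $X<4(t+1)$.

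For the lower bound, $j_{\nu,2}$ increases in $\nu$, so $X(t)\ge X(0)=j_{1,1}^2/4$. Then $j_{1,1}^2>13.6$ follows from the Rayleigh sum $\sum j_{1,k}^{-4}=1/(16\cdot4\cdot3)=1/192$ used in Proposition~\ref{lem:zero-separation}. This gives $j_{1,1}^4>192$, i.e.\ $j_{1,1}^2>13.86$, and hence $X>3.46>17/5$.

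\emph{Bounds on $c$.} I would use the Watson/Schläfli formula
\[
\frac{dj_{\nu,k}}{d\nu}=\frac{2\nu}{j_{\nu,k}}\int_0^\infty K_0(2j_{\nu,k}\sinh s)e^{-2\nu s}\,ds ,
\]
valid for $\nu>0$, and for $\nu\in(-1,0]$ an analogous integral representation of $\frac{d}{d\nu}j_{\nu,k}^2$ from DLMF~10.21(iv). This yields
\[
c=\frac{d}{dt}\log j_{t-1,2}^2 .
\]
I would then bound this by comparing with the known inequalities $\frac{d}{d\nu}j_{\nu,k}^2<2j_{\nu,k}^2/(\nu+1)$-type estimates (Ismail--Muldoon), which produce the upper bound $2/(t+2)$. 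The lower bound $188/(259+94t)$ would come from the convexity of $\nu\mapsto j_{\nu,2}^2$ together with a lower bound on $j_{\nu,2}^2$ that is affine in $\nu$.

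\emph{Extension to $t=0$ and the Bessel value.} $Q$ is the analytic Bessel-Clifford function ${}_0F_1(;t;-Xu)$ divided by $\Gamma(t)$, and it stays analytic at $t=0$ with a simple zero there. Hence the implicit-function argument of Lemma~\ref{lem:contact} extends $X$ across $0$, and $c(0)<1$ then follows from the upper bound $2/(t+2)$ by continuity.

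For $J_0(j_{1,1})<-2/5$, I would truncate the alternating series of $J_0$ at a rational upper bound for $j_{1,1}$, with a tail controlled by the alternating series test. Since $J_0'=-J_1<0$ for $x\in(j_{0,1},j_{1,1})$ and $J_0$ attains its minimum at $j_{1,1}$, an enclosure of $j_{1,1}$ is needed:
\begin{itemize}
\item the Rayleigh bound above gives the lower end;
\item sign checks of $J_1$ at rational points give the upper end, e.g.\ $3.83<j_{1,1}<3.84$.
\end{itemize}
Then $J_0(3.83)\approx-0.4028$, which is barely below $-2/5$, and the truncated series gives the bound with a monotonicity correction.

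\emph{Main obstacle.} I expect the derivative bounds on $c$, especially the lower bound, to be the hardest step: they require a sharp, explicit, rigorous control of $dj_{\nu,2}/d\nu$ uniformly for $\nu\in(-1,1/2]$.
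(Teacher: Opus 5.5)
\textbf{Main gap: the bounds on $c$.} The bounds on $c$, and with them $c(0)<1$, are not established, and the tools you name cannot give them as described.

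An estimate of the form $\frac{d}{d\nu}j_{\nu,k}^2<2j_{\nu,k}^2/(\nu+1)$ only says $c(t)<2/t$; for example it gives $4/3$ instead of $4/7$ at $t=3/2$. Both target bounds are nearly sharp at one endpoint: $c(0)\approx0.976$ against $1$, and $c(3/2)\approx0.4750$ against $188/400=0.47$.

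Convexity of $X$ points the wrong way. Since $X''/X=c'+c^2$, $X''\ge0$ means $(1/c)'\le1$. This propagates lower bounds on $c$ forward and upper bounds backward, the opposite of what the sharp endpoint data allow. Secants through known values cannot reach $0.47$ either: the one through $X(1/2)=9\pi^2/16$ gives only $c(3/2)\ge7/16$.

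Nor can $c(0)<1$ come ``by continuity'' from $c<2/(t+2)$. That gives only $c(0)\le1$, and the logic must run the other way.

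The missing idea is concavity of the zero itself. For $x(t)=j_{t-1,2}$, $x''(t)\le0$ (Elbert; Muldoon) together with $x'>0$ gives $c'=2x''/x-c^2/2\le-c^2/2$, i.e.\ $(1/c)'\ge1/2$. So everything reduces to the two endpoint values of $c$, which are explicit:
\begin{itemize}
\item Implicit differentiation at order $-1$ gives $c(0)=(1+J_0(j_{1,1})^{-2})/(2X_0)<29/(8X_0)<1$. This is precisely why $J_0(j_{1,1})<-2/5$ is part of the statement. It also uses $X_0>29/8$, which comes from the third power sum $\sum_k z_k^{-3}=1/48$.
\item At $t=3/2$, the half-order derivative formula gives $c(3/2)=\operatorname{Si}(4\pi)/\pi>1/2-1/(4\pi^2)>47/100$.
\end{itemize}
Integrating $(1/c)'\ge1/2$ forward from $0$ and backward from $3/2$ yields exactly $2/(t+2)$ and $188/(259+94t)$.

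\textbf{Further unsupported steps.} Your upper bound $X<4(t+1)$ needs convexity of $\nu\mapsto j_{\nu,2}^2$ on all of $(-1,1/2]$. You give no source covering $-1<\nu<0$, and convexity is not a property that analytic continuation extends. If such a result is available, your chord argument works, provided you also prove $j_{1,1}<4$ rigorously. The paper avoids convexity with a Rayleigh--Ritz argument: for the weighted form $\int_0^1 s^{2t-1}|u'|^2\,ds$ with trial functions $s^{2i}(1-s^2)$, $i=0,1,2$, the matrix $A-16(t+1)B$ is negative definite on a two-dimensional subspace, so $j_{t-1,2}^2<16(t+1)$.

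For the extension to $t=0$, continuing the simple zero at $j_{1,1}^2/4$ is not enough. A new zero $z_1(t)=t+O(t^2)$ emerges from $z=0$, and you must check that the continued zero is the \emph{second} one for small $t>0$.

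For $J_0(j_{1,1})<-2/5$, you only need a lower bound on $j_{1,1}$, since $J_0$ decreases on $(0,j_{1,1})$. The bound $X_0>29/8>18/5$ lets you evaluate the truncated series at $x_*=2\sqrt{18/5}$ in rational arithmetic. Your point $3.83$ would instead require a separate rigorous certificate that $j_{1,1}>3.83$.
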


\begin{proof}
We begin by continuing the second zero to $t=0$. The series
$$
F(t,z)=t+\sum_{k=1}^{\infty}
\frac{(-z)^k}{k!(t+1)_{k-1}}
$$
is jointly analytic near $t=0$ and on bounded $z$ sets.  For $t>0$ it
equals $t\,{}_0F_1(;t;-z)$, whereas
$F(0,z)=-\sqrt z\,J_1(2\sqrt z)$.  At $(t,z)=(0,0)$ its
$z$ derivative is $-1$, so the implicit-function theorem gives a
unique small zero $z_1(t)=t+O(t^2)>0$.  If
$X_0=j_{1,1}^2/4$, the function $F(0,z)$ is negative on $(0,X_0)$
and has a simple zero at $X_0$.  Uniform convergence on intervening
compact intervals, together with local uniqueness at the two endpoint
zeros, shows that the zero continuing $X_0$ is the second positive zero
for small $t>0$.  This proves the asserted analytic extension.

Let $z_k=j_{1,k}^2/4$. The product representation
\cite[(10.21.15)]{DLMF} for ${}_0F_1(;2;-z)$ gives
$$
\sum_{k\geq1}z_k^{-2}=\frac1{12},\quad
\sum_{k\geq1}z_k^{-3}=\frac1{48}.
$$
Indeed, its first three elementary symmetric coefficients are
$1/2$, $1/12$, and $1/144$, and the identities for the second and third
power sums give the displayed values.  Thus
\begin{equation}\label{eq:X0-bounds}
X_0>2\sqrt3>\frac{17}{5},\quad
X_0>48^{1/3}>\frac{29}{8}.
\end{equation}
The zeros increase strictly with the order \cite[(10.21.17)]{DLMF},
so $X(t)>X_0$.
To prove \eqref{eq:left-bessel-value}, put $x_*=2\sqrt{18/5}<j_{1,1}$.
Since $J_0'=-J_1<0$ before $j_{1,1}$ \cite[(10.6.3)]{DLMF},
the alternating Bessel series,
whose term magnitudes decrease after index one at $x_*$, gives
\begin{equation}\label{eq:J0-rational-bound}
J_0(j_{1,1})<J_0(x_*)<
\sum_{k=0}^{6}\frac{(-1)^k(18/5)^k}{(k!)^2}
=-\frac{157106}{390625}<-\frac25.
\end{equation}

The upper bound for $X$ follows from a two-dimensional trial space and
the variational principle for quadratic forms
\cite[Theorem~XIII.2]{ReedSimon1978}. Consider the form
$$
a[u]=\int_0^1 s^{2t-1}|u'(s)|^2\,ds
$$
in $L^2((0,1),s^{2t-1}ds)$, whose norm we denote by $\|\cdot\|$.
Its domain consists of locally absolutely
continuous functions with $u(1)=0$ and
$\int_0^1s^{2t-1}(|u|^2+|u'|^2)\,ds<\infty$; no essential boundary
condition is imposed at zero. Completeness on compact subintervals and
in the weighted norms shows that the form is closed. By
Cauchy--Schwarz and integration in $s$,
$$
|u(s)|^2\leq a[u]\int_s^1r^{1-2t}\,dr,
\quad \|u\|^2\leq\frac{a[u]}{4t}.
$$
The form domain is dense, since it contains every smooth function
compactly supported in $(0,1)$. The representation theorem for closed
positive forms \cite[Theorem~VIII.15]{ReedSimon1972} therefore gives a
unique positive self-adjoint operator; the preceding bound makes it
invertible. Integration by parts gives its natural boundary condition
$$
\lim_{s\downarrow0}s^{2t-1}u'(s)=0.
$$
Solving the inhomogeneous equation with this condition and $u(1)=0$
gives the symmetric inverse kernel
$$
G(s,v)=\int_{\max\{s,v\}}^1r^{1-2t}\,dr
$$
with respect to the weighted measure $v^{2t-1}dv$.
The integral of its square over the product measure is
$1/t$ times
$\int_0^1s^{4t-1}(\int_s^1r^{1-2t}dr)^2ds<\infty$.
Approximation of this kernel in the product $L^2$ space by finite
sums of separated functions makes its integral operator compact.
The spectrum is therefore discrete
\cite[Theorem~XIII.64]{ReedSimon1978}. In the eigenvalue equation
$$
-(s^{2t-1}u')'=\lambda s^{2t-1}u
$$
the zero-flux condition selects the solution proportional to
$s^{1-t}J_{t-1}(\sqrt\lambda s)$: the other independent solution has
nonzero limiting flux. This also excludes that solution when $0<t<1$,
although it then has finite form energy. The condition at one gives
the eigenvalues
$j_{t-1,m}^2$, $m\geq1$.

To construct the required trial space, take
$\phi_i(s)=s^{2i}(1-s^2)$,
$i=0,1,2$, and form the real symmetric matrices
$$
A_{ij}=\int_0^1s^{2t-1}\phi_i'\phi_j'\,ds,
\quad B_{ij}=\int_0^1s^{2t-1}\phi_i\phi_j\,ds.
$$
Direct integration gives
$$
\begin{aligned}
B_{ij}&=\frac1{2(t+i+j)}-\frac1{t+i+j+1}
+\frac1{2(t+i+j+2)},\\[7pt]
A_{ij}&=2\left\{\frac{ij}{t+i+j-1}
-\frac{i(j+1)+(i+1)j}{t+i+j}
+\frac{(i+1)(j+1)}{t+i+j+1}\right\},
\end{aligned}
$$
where a term with zero numerator is interpreted as zero.
The three leading principal minors of $A-16(t+1)B$ are
$$
\begin{aligned}
\Delta_1&=\frac{2(t^2-6t-8)}{t(t+1)(t+2)}<0,\\[7pt]
\Delta_2&=\frac{16(t^2-9t-4)}
{(t+1)(t+2)^2(t+3)^2(t+4)}<0,\\[7pt]
\Delta_3&=\frac{1152p(t)}
{t(t+1)(t+2)^3(t+3)^3(t+4)^3(t+5)^2(t+6)},
\end{aligned}
$$
where
$$
p(t)=t^5-7t^4-24t^3-44t^2+96t+64.
$$
On $[0,3/2]$ this polynomial is strictly concave, because
$p''(t)=20t^3-84t^2-144t-88<0$; also
$p(0)=64$ and $p(3/2)=5/32$.  Hence $\Delta_3>0$.
Successive completion of squares gives diagonal coefficients
$\Delta_1$, $\Delta_2/\Delta_1$, and $\Delta_3/\Delta_2$, with
signs $-,+,-$. Thus the matrix is negative definite on a
two-dimensional subspace.
The corresponding two-dimensional trial space satisfies
$a[u]<16(t+1)\|u\|^2$ for every nonzero $u$ in that space.
Its maximum Rayleigh quotient is strictly less than $16(t+1)$, so the
variational principle gives
$j_{t-1,2}^2<16(t+1)$.

To bound $c$, we use the endpoint estimates and the concavity of
the Bessel zero with respect to its order. This gives $x''(t)\leq0$; see
\cite{Elbert1977} and \cite[p.~155, Section~1]{Muldoon1986}.
Together with $x'(t)>0$ \cite[(10.21.17)]{DLMF}, this implies
\begin{equation}\label{eq:zero-reciprocal-slope}
c'(t)=\frac{2x''(t)}{x(t)}-\frac{c(t)^2}{2}
\leq-\frac{c(t)^2}{2},\quad
\left(\frac1{c(t)}\right)'\geq\frac12.
\end{equation}
It therefore suffices to bound $c$ at the two endpoints.

Put $x_0=j_{1,1}$. Implicit differentiation at order $-1$, using
the identities
\cite[(10.2.4), (10.15.3), and (10.5.2)]{DLMF}, gives
$$
x'(0)=\frac{1+J_0(x_0)^{-2}}{x_0},\quad
c(0)=\frac{1+J_0(x_0)^{-2}}{2X_0}<\frac{29}{8X_0}<1.
$$
The strict estimates follow from \eqref{eq:X0-bounds} and
\eqref{eq:J0-rational-bound}.

At the other endpoint, the half-order derivative identity
\cite[(10.15.6)]{DLMF} gives
$$
x'(3/2)=\operatorname{Si}(4\pi),\quad
c(3/2)=\frac{\operatorname{Si}(4\pi)}\pi,
\quad \operatorname{Si}(z)=\int_0^z\frac{\sin s}s\,ds.
$$
Here the integrand is assigned its limiting value at zero. The limit
$\operatorname{Si}(+\infty)=\pi/2$ is given in
\cite[(6.2.14)]{DLMF}.
Integration by parts gives
$$
\int_{4\pi}^\infty\frac{\sin s}s\,ds
=\frac1{4\pi}-2\int_{4\pi}^\infty\frac{\sin s}{s^3}\,ds
<\frac1{4\pi}.
$$
The last integral is positive by pairing each positive half-period with
the following negative half-period, whose denominator is larger.
Therefore, using $\pi>3$,
$$
c(3/2)>\frac12-\frac1{4\pi^2}>
\frac12-\frac1{36}>\frac{47}{100}.
$$
Integrating \eqref{eq:zero-reciprocal-slope} forward from zero and
backward from $3/2$, respectively, gives
$$
\frac1{c(t)}>1+\frac t2,\quad
\frac1{c(t)}<\frac{100}{47}-\frac{3/2-t}{2}
=\frac{259+94t}{188}.
$$
These are the two bounds in \eqref{eq:bessel-bounds}.
\end{proof}

\subsection{Bounds for the exponent and its derivative}

\begin{proposition}\label{lem:basic-bounds}
On $0<t\leq3/2$, the exponent satisfies
\begin{equation}\label{eq:basic-bounds}
\frac{2t}{3}\leq Y(t)<t,\quad 0<Y(t)\leq1,
\quad 0<q(t)=t-Y(t)\leq\frac12,
\quad \frac12<Y'(t)<1.
\end{equation}
The first inequality is strict for $t<3/2$.  In addition,
\begin{equation}\label{eq:affine-contact-bound}
Y(t)<\frac t2+\frac14,\quad 0<t<\frac32.
\end{equation}
\end{proposition}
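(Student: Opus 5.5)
The bounds on $Y$ itself should follow by translating the Cho--Chung--Yun positivity bounds through the substitution \eqref{eq:curvature-conversion}, $\beta(t-1)=t-2Y(t)$ with $\alpha=t-1$. The upper bound $\beta\leq-(\alpha+1)/3=-t/3$ becomes $t-2Y\leq-t/3$, that is, $Y\geq2t/3$. The lower bound $\beta>-\alpha-1=-t$ gives $Y<t$, which is also the statement $q>0$ of Lemma~\ref{lem:contact}. The lower bound $\beta>-1/2$ gives $Y<t/2+1/4$, which is \eqref{eq:affine-contact-bound}.

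The remaining position bounds then follow quickly. Lemma~\ref{lem:contact} gives $Y'>0$ and $Y(3/2)=1$, so $0<Y\leq1$. Next, $q=t-Y\leq t/3\leq1/2$.

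For strictness of $Y\geq2t/3$ when $t<3/2$, the Cho--Chung--Yun bound is stated only with $\leq$. I would obtain strictness from the real-zero theorem of Cho--Chung--Park \cite[Theorem~7.2(A)]{ChoChungPark2026}. The key observation is that equality $\beta=-(\alpha+1)/3$ at an interior $\alpha$ corresponds to a hypergeometric integral whose positivity is an endpoint case of that theorem. Equality would therefore force a double zero, or a zero in a forbidden location, which the theorem excludes for $\alpha<1/2$. At $t=3/2$ one checks directly that $Y=1=2t/3$, so the inequality is attained there.

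The derivative bounds $1/2<Y'<1$ are the real work. I would use the two identities \eqref{eq:mean-trace}, namely $Y'=\sum w_ng_n$ and $\sum w_nv_n=0$, together with the now-established domain $2t/3\leq Y<t$, $Y\leq1$.

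\emph{Upper bound $Y'<1$.} A bare bound $g_n<1$ does not suffice, because $g_2$ can approach $(Y+2)/2$ when $q$ is small. I would exploit the cancellation instead: it is enough to find $\mu\geq0$ with
$$
g_n-1\leq\mu v_n\quad\text{for all }n\geq2,
$$
with strict inequality for some $n$. Summing against $w_n$ and using $\sum w_nv_n=0$ then gives $Y'<1$. Concretely, $v_n$ is positive for small $n$ and negative for large $n$, while $g_n-1$ is negative for $n\geq3$ on this domain. So I would check the pair $n=2,3$ by hand, using $t\leq3Y/2$ and $t>Y$, and verify that the ratios $(g_n-1)/v_n$ are monotone in $n$ for the tail.

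\emph{Lower bound $Y'>1/2$.} I would argue symmetrically, choosing $\nu\geq0$ with
$$
g_n-\tfrac12\geq\nu v_n\quad\text{for all }n\geq2.
$$
For this direction the lower bound $Y\geq2t/3$ is what makes $g_n$ large enough at small $n$. An alternative I would try first is the integral form \eqref{eq:first-variation}. There $Y'>1/2$ reads
$$
\int_0^1P\,u^{-1}\left\{\frac{2u^q}{1-u}-1\right\}du>0,
$$
and the identity $\int_0^1u^{q-1}P\,du=q^{-1}\int_0^1P\,du$ from \eqref{eq:trace-integrals} can absorb the part of the bracket near $u=0$.

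I expect this derivative step to be the main obstacle. The constants must hold uniformly up to $t\to0$, where $Y,q\to0$ simultaneously and the ratio $q/Y\in(0,1/2]$ is the only surviving information. I would handle that regime by rescaling $g_n$ and $v_n$ in $Y$ before choosing $\mu$ and $\nu$.
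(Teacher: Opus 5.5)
\textbf{The gap: the lower bound $Y'>1/2$.} No comparison $g_n-\frac12\geq\nu v_n$ can hold for all $n$ with a finite $\nu$. Since $q>0$, $g_n\sim\frac{\Gamma(t)}{\Gamma(Y)}n^{-q}\to0$, and also $u_n\to0$, $e_n\to0$. So $(v_n,g_n)\to(0,0)$, and this point lies below every line through $(0,\frac12)$.

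More fundamentally, positivity of the weights together with the two identities \eqref{eq:mean-trace} cannot by themselves force $Y'>1/2$. Take an index $n_0$ with $v_{n_0}>0$; one exists because $v_n<0$ for large $n$. Put weight only on $n_0$ and on a large $N$, with $w_{n_0}v_{n_0}+w_Nv_N=0$. Then $w_{n_0}\to0$ and $\sum w_ng_n\to0$ as $N\to\infty$.

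Your integral form is the same statement in other notation. Writing $P=A_0\sum_nw_nu^Y(1-u)^n/B(Y,n+1)$, the integral $\int_0^1u^{q-1}P/(1-u)\,du$ becomes $A_0\sum w_ng_n$. The identity $\int_0^1u^{q-1}P\,du=q^{-1}\int_0^1P\,du$ becomes exactly $\sum w_nv_n=0$. So absorbing the part of the bracket near $u=0$ runs into the same obstruction. Unlike $Y'<1$, this bound needs quantitative information about the Bessel zero $X$.

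\textbf{How the paper supplies that information.} It works with the explicit coefficients $a_k=X^k/(k!(t)_k(Y+k))$ and writes $A_0(Y'-\frac12)=-\sum_k(-1)^ka_kh_k$. It removes the affine part of $h_k$ using both alternating moment identities, $\sum(-1)^ka_k=0$ and $\sum k(-1)^ka_k=0$ (the latter is $Q(t,1)=0$). This yields $\sum_n(-1)^nC_nk_n$. The bound $X<4(t+1)$ of Proposition~\ref{lem:bessel-bounds} makes $C_n$ positive and decreasing for $n\geq2$ and controls $C_1/C_0$. The bound $Y\geq2t/3$ makes the second differences $k_n$ positive and decreasing. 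A polynomial estimate then handles the first pair.

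\textbf{Strictness of $Y>2t/3$ for $t<3/2$.} Your Cho--Chung--Park sketch does not establish this. The paper invokes that theorem only with $a>1$, whereas here $a=Y\leq1$. Strictness instead follows from Cho--Chung--Yun, Theorem~5.1(ii), which gives strict positivity of the integral at $\beta=-t/3$, combined with the sign-change argument of Lemma~\ref{lem:contact}.

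\textbf{The other parts.} The rest of your position bounds is correct. Deriving \eqref{eq:affine-contact-bound} directly from $\beta>-1/2$ is a valid shortcut; the paper instead integrates $Y'>1/2$ backward from $Y(3/2)=1$. Likewise $q\leq t/3\leq1/2$ is a valid shortcut. Your scheme for $Y'<1$ is the paper's, with $\mu=2$: it proves $g_n<1+2v_n$ for every $n\geq2$ at once. One correction there: your claim $g_n<1$ for $n\geq3$ fails on $2t/3\leq Y<t$ as $q\to0$, where $g_n\to(Y+n)/n$. This is harmless with $\mu=2$.
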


\begin{proof}
The inequalities $0<Y<t$ and $0<Y\leq1$ follow from
Lemma~\ref{lem:contact}. The bound $Y\geq2t/3$ is precisely
\cite[Corollary~5.1(i)]{ChoChungYun2020} under
$\beta(t-1)=t-2Y(t)$; strictness for $t<3/2$ follows from
\cite[Theorem~5.1(ii)]{ChoChungYun2020}, since the Bessel integral
is strictly positive at $\beta= -t/3$. At $t=3/2$ one has $Y=1$.

The upper slope bound follows by averaging the component inequality
\begin{equation}\label{eq:unit-component-bound}
g_n<1+2v_n.
\end{equation}
To prove it for every $n\geq2$, set
$$
C=\frac{n+Y}{n},\quad D=\frac{2n+1}{n},\quad
H_n(Y)=\sum_{k=0}^n\frac1{Y+k}.
$$
For $0<Y\leq1$ one has $H_n(Y)>D/C$.  Indeed, the derivative of
$H_n(Y)-(2n+1)/(n+Y)$ is strictly negative, since
$$
-\sum_{k=0}^n\frac1{(Y+k)^2}+
\frac{2n+1}{(n+Y)^2}<-
\frac1{Y^2}+\frac{2n+1}{(n+Y)^2}<0.
$$
At $Y=1$ the difference equals
$\sum_{k=1}^{n+1}1/k-2+1/(n+1)$; its value at $n=2$ is $1/6$,
and its increment is $n/\{(n+1)(n+2)\}>0$.
Since $q>0$, expansion of a finite product gives
$$
e_n=\prod_{k=0}^n\left(1+\frac q{Y+k}\right)^{-1}
<\frac1{1+qH_n(Y)}<\frac C{C+qD}.
$$
The identity $g_n=e_n(t+n)/n$ therefore implies
$$
g_n+2qe_n=e_n(C+qD)<C,
$$
and hence
$$
1+2v_n-g_n>
1+\frac{2Y}{Y+n+1}-\frac{n+Y}{n}
=\frac{Y(n-Y-1)}{n(Y+n+1)}\geq0.
$$
This proves \eqref{eq:unit-component-bound}, including the strict sign
when $n=2$ and $Y=1$.  Averaging it in \eqref{eq:mean-trace} gives
$Y'<1$.  Thus $q'=1-Y'>0$, and $q(3/2)=1/2$ implies $q<1/2$
for $t<3/2$.

For the lower slope bound, the two defining equations remove the affine
part of a sequence of parameter derivatives. The resulting alternating
sum has a positive tail; its first pair will be estimated separately.
Here $t,Y,X$ are independent variables in the function
$$
M(t,Y,X)=\int_0^1u^{Y-1}{}_0F_1(;t;-Xu)\,du
=\sum_{k\geq0}(-1)^ka_k,
\quad a_k=\frac{X^k}{k!(t)_k(Y+k)}.
$$
All expressions are subsequently evaluated along the curve.
The vanishing integral and the vanishing endpoint value give
\begin{equation}\label{eq:two-alternating-moments}
\sum_{k\geq0}(-1)^ka_k=0,\quad
\sum_{k\geq0}k(-1)^ka_k=0.
\end{equation}
Also $\partial M/\partial Y=-A_0$, by integration by parts, with
$A_0>0$ as in \eqref{eq:weights}.  Put
$$
N=\frac{\partial M}{\partial t}
+X'\frac{\partial M}{\partial X}
+\frac12\frac{\partial M}{\partial Y},\quad
h_k=\sum_{j=0}^{k-1}\frac1{t+j}+\frac1{2(Y+k)}.
$$
Differentiating the defining equation and using
\eqref{eq:two-alternating-moments} gives
$$
N=A_0\left(Y'-\frac12\right)
=-\sum_{k\geq0}(-1)^ka_kh_k.
$$
The second forward difference of $h_k$ is $-k_n$, where
$$
k_n=\frac1{(t+n)(t+n+1)}
-\frac1{(Y+n)(Y+n+1)(Y+n+2)},\quad n\geq0.
$$
Subtracting the affine function $h_0+k(h_1-h_0)$ from $h_k$ and
then reversing the sums yields
\begin{equation}\label{eq:alternating-slope}
N=\sum_{n\geq0}(-1)^nC_nk_n,
\quad C_n=\sum_{\ell\geq0}(-1)^\ell(\ell+1)a_{n+2+\ell}.
\end{equation}
The coefficients $C_n$ are not the Taylor coefficients $c_n$
in \eqref{eq:positive-expansion}. Every rearrangement is absolutely
convergent, since $a_{k+1}/a_k=O(k^{-2})$ and the additional factors
have at most polynomial or logarithmic growth.  The two moment
identities give
\begin{equation}\label{eq:first-alternating-coefficients}
C_0=\frac1Y,\quad
C_1=\frac{X}{t(Y+1)}-\frac2Y.
\end{equation}

First, $k_0>k_1>\cdots>0$. Put $z=Y+n$. The bound $Y\geq2t/3$ gives
$0<q\leq Y/2\leq z/2$.  Replacing $q$ by its upper bound gives
$$
z(z+1)(z+2)-(z+q)(z+q+1)
\geq z\left(z^2+\frac34z+\frac12\right)>0,
$$
and
$$
\begin{aligned}
&2z(z+1)(z+2)(z+3)\\[7pt]
&\quad-3(z+q)(z+q+1)(z+q+2)\\[7pt]
&\quad\geq\frac z8(16z^3+15z^2+14z+24)>0.
\end{aligned}
$$
These are precisely the cross-multiplied inequalities $k_n>0$ and
$k_n-k_{n+1}>0$.

The alternating coefficients satisfy $C_n>0$ for $n\geq1$ and
$C_2>C_3>\cdots>0$.  Indeed,
$$
\frac{a_{k+1}}{a_k}
=\frac{X(Y+k)}{(k+1)(t+k)(Y+k+1)}
<\frac{4(t+1)}{(k+1)(t+k+1)},
$$
because $t-Y>0$ and $X<4(t+1)$. For $k\geq1$, the last bound increases
with $t$ and decreases with $k$. Using $t\leq3/2$ and $k=n+2+\ell$, for
$n\geq1$ the consecutive magnitudes in the series defining $C_n$
have ratio bounded above by
$$
\frac{20(\ell+2)}{(\ell+1)(\ell+4)(2\ell+11)}<1,
$$
since the difference between the denominator and the numerator is
$2\ell^3+21\ell^2+43\ell+4>0$.  Thus $C_n>0$ by the
alternating-series estimate.  Moreover,
$$
C_n-C_{n+1}=\sum_{\ell\geq0}(-1)^\ell(2\ell+1)a_{n+2+\ell}.
$$
For $n\geq2$ its consecutive magnitudes have ratio bounded above by
$$
\frac{20(2\ell+3)}{(2\ell+1)(\ell+5)(2\ell+13)}<1,
$$
the difference being $4\ell^3+48\ell^2+113\ell+5>0$.
This proves strict decrease. Pairing consecutive terms now gives
\begin{equation}\label{eq:positive-alternating-tail}
\sum_{n\geq2}(-1)^nC_nk_n>0.
\end{equation}

It remains to estimate the first pair. Put
$$
L=\frac{4Y(t+1)}{t(Y+1)}-2,\quad G=k_0-Lk_1.
$$
We show $G>0$ on the larger domain
$0<t\leq3/2$, $2t/3\leq Y\leq t$.
Clearing the positive denominators gives
\begin{equation}\label{eq:first-pair-polynomial}
G=\frac{p(t,Y)}{tY(t+1)(t+2)(Y+1)^2(Y+2)(Y+3)},
\end{equation}
where
$$
\begin{aligned}
p(t,Y)={}&-(t+2)Y^5-(3t+10)Y^4+(7t-10)Y^3\\[7pt]
&+(t^3+7t^2+41t+18)Y^2\\[7pt]
&+(-6t^3-18t^2+6t+12)Y-3t(t+1)(t+2).
\end{aligned}
$$
For fixed $t$, its third $Y$ derivative decreases on $Y\geq0$, because
$$
\frac{\partial^4p}{\partial Y^4}=-120(t+2)Y-72t-240<0.
$$
At the left endpoint,
$$
\frac{\partial^3p}{\partial Y^3}\left(t,\frac{2t}{3}\right)
=-\frac23(40t^3+152t^2+177t+90)<0.
$$
Therefore $\partial p/\partial Y$ is strictly concave on the required
interval, and its value at the left endpoint is positive:
$$
\begin{aligned}
\frac{\partial p}{\partial Y}\left(t,\frac{2t}{3}\right)
&=\frac2{81}(486+1215t+945t^2+33t^3-170t^4-40t^5)\\[7pt]
&\geq\frac2{81}\left(486+1215t+\frac{855}{2}t^2+33t^3\right)>0.
\end{aligned}
$$
Here $t^4\leq9t^2/4$ and $t^5\leq27t^2/8$.
Since this concave derivative can change sign only from positive to
negative, $p(t,Y)$ attains its minimum at an endpoint. There,
$$
\begin{aligned}
p(t,t)&=t(t+1)^2(t+2)(3-t^2)>0,\\[7pt]
p\left(t,\frac{2t}{3}\right)
&=\frac t{243}(486+729t+63t^2-192t^3-100t^4-32t^5)\\[7pt]
&\geq\frac t{243}(486+729t-558t^2)>0.
\end{aligned}
$$
For the second bound use $t^3\leq3t^2/2$, $t^4\leq9t^2/4$,
and $t^5\leq27t^2/8$.  The last quadratic is concave with endpoint
values $486$ and $324$ on $[0,3/2]$.  This proves $G>0$.

By \eqref{eq:first-alternating-coefficients} and $X<4(t+1)$,
$C_1/C_0<L$.  Since $C_0,k_1>0$, we conclude that
$$
C_0k_0-C_1k_1>C_0(k_0-Lk_1)>0.
$$
Together with \eqref{eq:alternating-slope} and
\eqref{eq:positive-alternating-tail}, this gives
$A_0(Y'-1/2)>0$, and therefore $Y'>1/2$.
The argument includes $t=3/2$ itself. Finally, integrate this strict
slope bound backward from $Y(3/2)=1$ to obtain
\eqref{eq:affine-contact-bound}.
\end{proof}
\section{Increasing ratios of consecutive weights}
\label{sec:weight-ratios}

The curvature argument requires the logarithmic derivatives of the
positive weights to increase strictly with the index. The next
proposition proves this property. We treat the indices $2$ and $3$
separately: the first ratio is explicit, and the second contains the
moving Bessel zero and requires the lower bound $\delta_3>1/16$.
A single integral comparison covers every index $n\geq4$. These
inequalities give the sign of the covariance term in
Lemma~\ref{lem:curvature-reduction}. The formula for $\delta_2$ and
the lower bound for $\delta_3$ give the two explicit contributions
used there. The proof is entirely analytic.

\begin{proposition}\label{lem:weight-ratios}
Let $w_n(t)$ be the positive weights in \eqref{eq:weights}.  Then
\begin{equation}\label{eq:weight-increments}
\delta_n(t):=\frac{d}{dt}\log\frac{w_{n+1}(t)}{w_n(t)}>0,
\quad n\geq2,\quad 0<t\leq\frac32.
\end{equation}
Moreover, $\delta_3(t)>1/16$ throughout this interval.
\end{proposition}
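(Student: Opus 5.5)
We need $\delta_n=\frac{d}{dt}\log(w_{n+1}/w_n)>0$. Since $A_0$ cancels in the ratio,
$$
\frac{w_{n+1}}{w_n}=\frac{c_{n+1}}{c_n}\cdot\frac{B(Y,n+2)}{B(Y,n+1)}
=\frac{c_{n+1}}{c_n}\cdot\frac{n+1}{Y+n+1}.
$$
So $\delta_n=\frac{d}{dt}\log(c_{n+1}/c_n)-\frac{Y'}{Y+n+1}$. The plan treats $n=2$, $n=3$ and $n\geq4$ separately.

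\emph{The case $n=2$.} By \eqref{eq:first-ratio}, $w_3/w_2=\lambda=(Y+t+2)/(Y+3)$, so $\delta_2=D(Y')$ with $D$ from \eqref{eq:A-block}. Writing $D(r)=\{Y+3+r(1-t)\}/\{(Y+t+2)(Y+3)\}$, this is positive for $0<r<1$ and $t\leq3/2$, because $Y+3-\tfrac12>0$. Proposition~\ref{lem:basic-bounds} gives $0<Y'<1$, so $\delta_2>0$.

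\emph{The case $n\geq4$.} Use \eqref{eq:coeff-integral}:
$$
\frac{c_{n+1}}{c_n}=\frac{X}{(n+1)(t+n)}\,\frac{I_{n+1}}{I_n},
\qquad I_n=\int_0^1v^{Y+n-1}{}_0F_1(;t+n;-Xv)\,dv>0 .
$$
The positivity comes from zero separation (Proposition~\ref{lem:zero-separation}). The logarithmic $t$-derivative has four parts:
\begin{itemize}
\item $c=X'/X$, bounded below by $188/(259+94t)$ in \eqref{eq:bessel-bounds};
\item the term $-1/(t+n)$;
\item $-Y'/(Y+n+1)$, bounded using $Y'<1$;
\item the variation of $\log(I_{n+1}/I_n)$.
\end{itemize}
For the last part, note that $I_{n+1}/I_n$ is the mean of $v$ under a positive measure, corrected by the ratio of hypergeometric factors. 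I would try to write $I_{n+1}/I_n$ via the contiguous relation $\frac{d}{dv}\bigl[v^{t+n-1}{}_0F_1(;t+n;-Xv)\bigr]=(t+n-1)v^{t+n-2}{}_0F_1(;t+n-1;-Xv)$, or via the Mellin–Barnes/power-series form $I_n=\sum_k(-X)^k/\{k!(t+n)_k(Y+n+k)\}$. Since $X<4(t+1)\leq10$ and $n\geq4$, this series is dominated by its first terms with ratio $O(X/(n+k)^2)$, and that yields explicit expansions
$$
\log\frac{I_{n+1}}{I_n}=-\frac{1}{Y+n+1}+\frac{1}{Y+n}+\frac{X\,\phi_n}{(t+n)(t+n+1)}+\cdots .
$$
Their derivatives in $t$, $Y$ and $X$ are controlled by alternating-series bounds, exactly as in the proof of Proposition~\ref{lem:basic-bounds}.

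The key inequality is that the positive term $c(t)\geq188/400$ dominates. The negative contributions are $1/(t+n)+Y'/(Y+n+1)\leq2/(n)$ plus the small correction. This is clear for large $n$, and it is the hardest at $n=4$, where $c\approx0.47$ must beat about $1/4.x+1/5.x\approx0.42$. So I would keep the next term of the $I$-expansion (its $X'$ contribution is positive, since $I_{n+1}/I_n$ increases with $X$ through a shift of mass toward larger $v$) and verify the residual with the domain bounds $2Y/5\leq q\leq Y/2$. A uniform comparison in $n$ then follows because each negative piece decreases like $1/n$ while $c$ is independent of $n$.

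\emph{The case $n=3$, and the main obstacle.} Here $c_3/c_2$ is explicit but $c_4/c_3$ is not. Write
$$
\frac{w_4}{w_3}=\frac{w_4}{w_2}\Big/\lambda,
$$
and express $c_4$ through $\partial^3H/\partial u^3$ at $u=1$. Repeated differentiation of $Q=YH+uH_u$ together with the Bessel equation \eqref{eq:trace-integrals} gives $c_4/c_2$ as a rational function of $t$, $Y$ and $X$, affine in $X$ (the $X$ term enters via $XQ$). Then
$$
\delta_3=\frac{d}{dt}\log\frac{c_4}{c_2}-\delta_2-\frac{Y'}{Y+4}+\frac{Y'}{Y+3}.
$$
This is a rational expression in $t$, $Y$, $X$, $Y'$ and $c$. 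The quantity $\delta_3-1/16$ is affine in $Y'$ and in $c$, so I would check its positivity at the extreme values $Y'\in(1/2,1)$, $c$ in \eqref{eq:bessel-bounds}, $X\in(17/5,4(t+1))$, with $Y,q$ in the domain of Proposition~\ref{lem:basic-bounds}. This is a finite polynomial sign check by cleared denominators and monotonicity, in the style of Section~\ref{sec:algebra}. I expect this step, with its margin $1/16$ and the coupling to the moving zero through $c$, to be the main obstacle. If the crude box is too loose, I would first try the sharper bound \eqref{eq:affine-contact-bound} on $Y$.
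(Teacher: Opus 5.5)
\paragraph{The cases $n=2$ and $n=3$.} Your treatment of $n=2$ matches the paper. For $n=3$ you follow the paper's route: the explicit ratio $c_4/c_2=F/12$ with $F=Y^2+Yt+t^2+5Y+4t+6-X$, followed by a polynomial sign check. The bookkeeping needs fixing. Since $w_4/w_3=(c_4/c_3)\cdot4/(Y+4)$ and $c_3/c_2=(Y+t+2)/3$,
\[
\delta_3=\frac{d}{dt}\log\frac{c_4}{c_2}-\frac{1+Y'}{Y+t+2}-\frac{Y'}{Y+4}
=\frac{d}{dt}\log\frac{c_4}{c_2}-\delta_2-\frac{Y'}{Y+3}-\frac{Y'}{Y+4}.
\]
So $Y'/(Y+3)$ enters with a minus sign, and with your sign the inequality you would check is off by $2Y'/(Y+3)$. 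Also, $\delta_3$ is affine in $Y'$ and $c$ but not in $X$, which appears both in $F$ (a denominator) and in $X'=cX$. After clearing $FD>0$ you must use the sign of each coefficient separately, as the paper does.

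\paragraph{The gap: the case $n\ge4$.} Your expansion assumes that $I_n=\sum_k(-X)^k/\{k!(t+n)_k(Y+n+k)\}$ is dominated by its first terms, with ratio $O(X/(n+k)^2)$. That is false. The ratio of consecutive terms is $X(Y+n+k)/\{(k+1)(t+n+k)(Y+n+k+1)\}$, which at $k=0$ is about $X/(t+n)$. At $t=3/2$ (where $X=\pi^2$, $Y=1$) this ratio is about $1.5$ for $n=4$ and stays above $1$ up to $n=7$. For instance, the terms of $I_4$ are roughly $0.2,-0.30,0.19,-0.075,0.019,\dots$ and sum to about $0.037$. So for the first several indices, $I_n$ is a small difference of larger terms. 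No truncated expansion of $\log(I_{n+1}/I_n)$, let alone of its derivatives in $t$, $Y$, $X$, is then controlled by alternating-series bounds.

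The remark that ``each negative piece decreases like $1/n$'' does not give an argument valid for all $n$. The correction terms are never bounded, including the one from varying the parameter $t+n$, which is negative to first order in $X$.

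Your reason for the sign of the $X'$ term is also backwards. Put $\phi_b(z)={}_0F_1(;b;-z)$ and $R_b=\phi_{b+1}/\phi_b$. Then $I_{n+1}/I_n=E_\mu[vR_{t+n}(Xv)]$ with $\mu\propto v^{Y+n-1}\phi_{t+n}(Xv)\,dv$, and increasing $X$ tilts $\mu$ toward smaller $v$. Monotonicity in $X$ does hold, but it is the inequality $zh'>h$ for $h=-zf'/f$, $f={}_1F_2(Y+n;t+n,Y+n+1;-z)$. The paper derives this from the real-zero theorem of Cho, Chung, and Park.

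The missing idea is the paper's structural argument. Set $a=Y+n$, $b=t+n$ and write $w_{n+1}/w_n=h(X)/(a+1)$. Use the Riccati identity $zh'=(a-h)(S-h)$, with $S=-z\phi_b'/\phi_b$, together with an integrating factor. This transfers $D_0S\ge\gamma zS'$ to $D_0h\ge\gamma zh'$, where $D_0=\partial_b+cz\partial_z$. The inequality for $S$ comes from the nonnegative Taylor coefficients of $R_b$ and the bound $\lambda_1>b\sqrt{b+1}$ on the first zero of $\phi_b$. The variation in $a$ is handled by a covariance inequality. Together these give $\delta_n>\gamma-1/(t+9/2)>0$, uniformly in $n$.
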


\begin{proof}
Fix a point of the branch and write $Y=Y(t)$, $X=X(t)$, $q=t-Y$,
$r=Y'(t)$, and $c=X'(t)/X(t)$.  We use the bounds of
Propositions~\ref{lem:basic-bounds} and \ref{lem:bessel-bounds}:
\begin{equation}\label{eq:mlr-inputs}
\begin{gathered}
0<Y\leq1,\quad 0<q\leq\frac12,\quad \frac12<r<1,
\\[7pt]
\frac{17}{5}<X<4(t+1),
\quad \frac{188}{259+94t}<c<\frac2{t+2}.
\end{gathered}
\end{equation}

\smallskip
\noindent\emph{The first ratio, $n=2$.}
By \eqref{eq:first-ratio},
$$
\delta_2(t)=\frac{r+1}{Y+t+2}-\frac r{Y+3}
=\frac{Y+3+r(1-t)}{(Y+t+2)(Y+3)}>0.
$$
For $t\leq1$ the numerator is positive term by term.  For
$1<t\leq3/2$, it is larger than $Y+4-t>0$, because $r<1$.

\smallskip
\noindent\emph{The second ratio, $n=3$.}
Here we need the stronger bound $\delta_3>1/16$, not just positivity.
The hypergeometric differential equation, or direct substitution of the
coefficient formula \eqref{eq:coeff-integral}, gives
$$
\frac{c_3}{c_2}=\frac{Y+t+2}{3},\quad
\frac{c_4}{c_2}=\frac{F}{12},\quad
F=Y^2+Yt+t^2+5Y+4t+6-X.
$$
In particular $F>0$.  Since $B(Y,5)/B(Y,4)=4/(Y+4)$,
\begin{equation}\label{eq:mlr-index3-ratio}
\frac{w_4}{w_3}=\frac{F}{D},\quad D=(Y+t+2)(Y+4).
\end{equation}
Differentiation gives $\delta_3=N/(FD)$, where
$$
\begin{aligned}
N={}&(Y+4)(X+2Yt+Y+t^2+4t+2)+rH-X'D,\\[7pt]
H={}&X(2Y+t+6)+Y^2-2Yt^2+4Y-t^3-6t^2-2t+4.
\end{aligned}
$$
Here $F,D>0$, so the problem is to obtain a uniform lower bound for
$N$.  The estimate $X>17/5$ first gives
$$
H>Y^2-2Yt^2+\frac{54}{5}Y-t^3-6t^2+
\frac75t+\frac{122}{5}>\frac{77}{8}.
$$
Indeed, the first expression on the right increases with $Y\geq0$;
at $Y=0$ it is concave in $t$, with endpoint values $122/5$ and
$77/8$ on $[0,3/2]$.
Using $r>1/2$ and $X'<2X/(t+2)$ then gives
$$
2(t+2)N>P(Y,t,X),
$$
where the following identity records the elimination explicitly:
$$
\begin{aligned}
P(Y,t,X)={}&X(t^2-4Y^2-16Y-4)
+(4t^2+11t+6)Y^2\\[7pt]
&+(24t^2+64t+32)Y-t^4+34t^2+80t+40.
\end{aligned}
$$
The coefficient of $X$ is negative on the parameter range.  Thus
$X<4(t+1)$ yields
$$
2(t+2)N>P_*(Y,t),
$$
where
\begin{equation}\label{eq:mlr-index3-pstar}
\begin{split}
P_*(Y,t)={}&(4t^2-5t-10)Y^2+(24t^2-32)Y\\[7pt]
&-t^4+4t^3+38t^2+64t+24.
\end{split}
\end{equation}
For the denominator, use the lower bound $X>17/5$ and put
$$
F_+(Y,t)=Y^2+Yt+t^2+5Y+4t+\frac{13}{5},
\quad E(Y,t)=5\{8P_*(Y,t)-(t+2)F_+(Y,t)D\}.
$$
We have $0<F<F_+$.  The inequality $E>0$ is equivalent to
$8P_*>(t+2)F_+D$; in particular $P_*>0$, and hence
$$
\delta_3=\frac{N}{FD}>
\frac{P_*}{2(t+2)F_+D}>\frac1{16}.
$$
The polynomial $P_*$ is strictly concave in $Y$ on $0\leq t\leq3/2$.
Every coefficient of $F_+D$ as a polynomial in $Y,t$ is nonnegative,
so $E$ is strictly concave in $Y$ as well.  It suffices to consider
the endpoints of $0\leq Y\leq\min\{t,1\}$: these are $Y=0,t$ when
$0\leq t\leq1$, and $Y=0,1$ when $1\leq t\leq3/2$.  Direct
factorization gives
$$
\begin{aligned}
E(0,t)&=-4(t+2)(15t^3-30t^2-207t-94),\\[7pt]
E(t,t)&=-2(t+1)(15t^4+75t^3+3t^2+278t-376),\\[7pt]
E(1,t)&=-5(13t^4+18t^3-330t^2-107t+402).
\end{aligned}
$$
The first expression is positive for $0\leq t\leq3/2$, since
$15t^3\leq(45/2)t^2$.  The polynomial in parentheses in the second
expression increases on $[0,1]$ and has value $-5$ at one.
For the last expression, the derivative of its parenthesized polynomial
is bounded above by
$$
52t^3+54t^2-660t-107\leq-462t-107<0,
\quad 1\leq t\leq\frac32,
$$
and that polynomial has value $-4$ at one.  Thus all endpoint values are
positive, and concavity proves $E>0$.  This establishes the uniform gap
$\delta_3>1/16$.

\smallskip
\noindent\emph{The remaining ratios, $n\geq4$.}
Fix such an index and put $a=Y+n$, $b=t+n$.  The proof rewrites the
weight ratio as a logarithmic derivative of one positive integral.  Its
variation in $(b,z)$ is controlled by a differential inequality, while its
variation in $a$ has a positive covariance.  For these comparisons
$a,b,z$ are independent variables; we substitute $z=X(t)$ only after
differentiation.  Define
$$
\phi_b(z)={}_0F_1(;b;-z),\quad
f_{a,b}(z)={}_1F_2(a;b,a+1;-z),
$$
$$
I(a,b,z)=\int_0^1v^{a-1}\phi_b(zv)\,dv=\frac{f_{a,b}(z)}a.
$$
The zero separation in Proposition~\ref{lem:zero-separation} shows that
$\phi_b(z)>0$ for $0\leq z\leq X$.  In particular $f_{a,b}$ is
positive there.  Put
$$
h(z)=-z\frac{f_{a,b}'(z)}{f_{a,b}(z)},\quad
S(z)=-z\frac{\phi_b'(z)}{\phi_b(z)},\quad
R_b(z)=\frac{\phi_{b+1}(z)}{\phi_b(z)}.
$$
Primes in these three expressions denote derivatives with respect to
$z$.  Differentiating under the integral gives
$f_{a,b}'(z)=-aI(a+1,b+1,z)/b$.  The coefficient formula
\eqref{eq:coeff-integral} and the beta identity
$B(Y,n+2)/B(Y,n+1)=(n+1)/(Y+n+1)$ now give
\begin{equation}\label{eq:mlr-euler-ratio}
\frac{w_{n+1}}{w_n}
=\frac{XI(a+1,b+1,X)}{b(a+1)I(a,b,X)}
=\frac{h(X)}{a+1}.
\end{equation}
It is therefore enough to show that the logarithmic growth of $h(X)$
dominates that of $a+1$.
Integration by parts gives
$$
(a-h)f_{a,b}=a\phi_b,
$$
and logarithmic differentiation yields the differential identity
\begin{equation}\label{eq:mlr-riccati}
z h'(z)=(a-h(z))(S(z)-h(z)).
\end{equation}

\smallskip
\noindent\emph{Variation in $b$ and $z$.}
We first control these two variables with $a$ fixed.
Comparison of the defining series gives
$$
R_b(z)=\frac1{1-zR_{b+1}(z)/\{b(b+1)\}}
=\sum_{k\geq0}A_k(b)z^k.
$$
Formal induction shows that $A_k(b)$ is a polynomial with nonnegative
coefficients, homogeneous of degree $k$, in the numbers
$1/\{(b+j)(b+j+1)\}$, $0\leq j<k$.  Consequently
\begin{equation}\label{eq:mlr-coefficient-derivative}
A_k'(b)\geq-k\left(\frac1b+\frac1{b+1}\right)A_k(b).
\end{equation}
These series and their $b$ derivatives converge locally uniformly for
$0\leq z\leq X$.  To see this, let $\lambda_1$ be the first positive
zero of $\phi_b$ and choose $X<R<\lambda_1$. The product
representation \cite[(10.21.15)]{DLMF}
shows that the closed disk $|z|\leq R$ contains no zero of $\phi_b$.
The same remains true for $b$ in a sufficiently small complex
neighborhood of its present value. Uniform coefficient estimates on this
disk \cite[Chapter~IV, Section~2, (2.14)]{Conway1978} justify the
asserted parameter differentiation.

Define the differential operator
$$
D_0=\frac{\partial}{\partial b}+cz\frac{\partial}{\partial z},
$$
holding $a$ fixed and treating the branch value $c$ as a fixed constant
in comparisons as $z$ varies.  Since
$S(z)=zR_b(z)/b$, coefficientwise application of
\eqref{eq:mlr-coefficient-derivative} gives
\begin{equation}\label{eq:mlr-S-derivative}
D_0S\geq\left(c-\frac1b-\frac1{b+1}\right)zS'(z)
+\frac{S(z)}{b+1}.
\end{equation}
Write $\lambda_k=j_{b-1,k}^2/4$. Logarithmic differentiation of
the same product and comparison of its coefficients give
$$
S(z)=\sum_{k\geq1}\frac{z}{\lambda_k-z},\quad
zS'(z)=\sum_{k\geq1}\frac{\lambda_kz}{(\lambda_k-z)^2},
\quad \lambda_1>b\sqrt{b+1}.
$$
Indeed, comparison of the quadratic terms in its logarithm gives
$\sum_k\lambda_k^{-2}=1/\{b^2(b+1)\}$, with more than one
positive summand. Comparison of corresponding summands gives
$$
\frac{S(z)}{zS'(z)}\geq1-\frac z{\lambda_1},\quad 0<z\leq X.
$$
Thus \eqref{eq:mlr-S-derivative} implies
\begin{equation}\label{eq:mlr-gamma}
D_0S\geq\gamma zS'(z),\quad
\gamma=c-\frac1b-\frac{X}{(b+1)\lambda_1}
>c-\frac1b-\frac{X}{b(b+1)^{3/2}},
\quad 0\leq z\leq X.
\end{equation}

The differential identity \eqref{eq:mlr-riccati} transfers this comparison
from $S$ to $h$. Put
$U=D_0h$, $V=zh'$, and $W=U-\gamma V$, with $\gamma$ fixed as in
\eqref{eq:mlr-gamma}.  The operators $D_0$ and $z\partial/\partial z$
commute.  Applying them to \eqref{eq:mlr-riccati} gives
$$
zW'-(2h-a-S)W=(a-h)(D_0S-\gamma zS')\geq0.
$$
Here $a-h>0$ by $(a-h)f_{a,b}=a\phi_b$.  An integrating factor is
$$
M(z)=z^a\exp\left\{\int_0^z\frac{S(s)-2h(s)}s\,ds\right\}.
$$
The integrand in the exponential is regular at zero.  Since $W(z)=O(z)$,
the regular solution satisfies
$$
W(z)=\frac1{M(z)}\int_0^z
\frac{M(s)}s(a-h(s))(D_0S(s)-\gamma sS'(s))\,ds\geq0.
$$
In particular $D_0h\geq\gamma zh'$.

To compare $zh'$ and $h$, note that
$$
a>1,\quad b=a+q\leq a+\frac12<a+1-\frac1{2a}.
$$
Under these conditions, Theorem~7.2(A) and the product representation
(1.3) in \cite{ChoChungPark2026}, applied to $f_{a,b}(s^2/4)$, give
$$
f_{a,b}(z)=\prod_{k\geq1}\left(1-\frac z{\zeta_k}\right).
$$
Here $0<\zeta_1\leq\zeta_2\leq\cdots$ are its zeros in the
$z$ variable, counted with multiplicity, and
$\sum_{k\geq1}\zeta_k^{-1}<\infty$.
Positivity of the integral representation on $[0,X]$ implies
$X<\zeta_1$.  Hence, for $0<z\leq X$,
\begin{equation}\label{eq:mlr-h-product}
h(z)=\sum_{k\geq1}\frac z{\zeta_k-z},\quad
zh'(z)=\sum_{k\geq1}\frac{\zeta_kz}{(\zeta_k-z)^2}>h(z)>0.
\end{equation}
The lower bound for $\gamma$ is uniform in the index. Since $b\geq t+4$,
$$
\sqrt{b+1}\geq\sqrt{t+5}>\frac{t+11}{5},
\quad 25(t+5)-(t+11)^2=4+3t-t^2>0.
$$
Equations \eqref{eq:mlr-inputs} and \eqref{eq:mlr-gamma} therefore give
$$
\begin{aligned}
\gamma-\frac1{t+9/2}
&>\frac{188}{259+94t}-\frac1{t+4}
-\frac{20(t+1)}{(t+4)(t+5)(t+11)}-\frac1{t+9/2}\\[7pt]
&=\frac{83455-5150t-19683t^2-3198t^3}
{(t+4)(t+5)(t+11)(2t+9)(94t+259)}>0.
\end{aligned}
$$
The numerator decreases on $[0,3/2]$ and equals $20650$ at
$t=3/2$; every denominator factor is positive. In particular $\gamma>0$,
so the comparison $D_0h\geq\gamma zh'$ and \eqref{eq:mlr-h-product}
imply
\begin{equation}\label{eq:mlr-logh-lower}
D_0\log h=\frac{D_0h}{h}>\gamma.
\end{equation}

\smallskip
\noindent\emph{Variation in $a$ and conclusion.}
The remaining variation, in $a$, has a positive sign. At $z=X$ define
the probability measure
$$
d\mu(v)=\frac{v^{a-1}\phi_b(Xv)}{I(a,b,X)}\,dv,
\quad 0<v<1.
$$
For integrable functions $g,k$, write
$$
E_\mu[g]=\int_0^1g(v)\,d\mu(v),\quad
\operatorname{Cov}_\mu(g,k)=E_\mu[gk]-E_\mu[g]E_\mu[k],
$$
whenever the displayed integrals exist.  The identity
$$
\frac{I(a+1,b+1,X)}{I(a,b,X)}=E_\mu[vR_b(Xv)]
$$
and differentiation in $a$ give
$$
\frac{\partial}{\partial a}\log h(X)
=\frac{\operatorname{Cov}_\mu(vR_b(Xv),\log v)}
{E_\mu[vR_b(Xv)]}>0.
$$
The strict sign follows by expanding the definition of covariance:
$$
\operatorname{Cov}_\mu(g,k)=\frac12
\int_0^1\int_0^1(g(v)-g(u))(k(v)-k(u))\,d\mu(u)\,d\mu(v):
$$
both $vR_b(Xv)$ and $\log v$ are strictly increasing.  The former
assertion follows from the nonnegative coefficients of $R_b$; the
density of $\mu$ is strictly positive.  All logarithmic integrals
converge because $a>0$.  Differentiation under the integral and
Fubini's theorem are justified, for example, by
\cite[Theorems~2.27 and~2.37(b)]{Folland1999}.

Finally, $a+1=b-q+1\geq t+9/2$. Along the branch,
$d/dt=r\partial/\partial a+D_0$, so
\eqref{eq:mlr-euler-ratio} and \eqref{eq:mlr-logh-lower} yield
$$
\begin{aligned}
\delta_n
&=r\frac{\partial}{\partial a}\log h(X)
+D_0\log h(X)-\frac r{a+1}\\[7pt]
&>\gamma-\frac1{t+9/2}>0.
\end{aligned}
$$
This proves the strict increment for every tail index. All three
arguments apply at $t=3/2$ itself, on the analytic branch through
that endpoint.
\end{proof}

\section{Parameter bounds and comparison of consecutive terms}\label{sec:trace}

This section proves the signs and inequalities between consecutive
terms used in Section~\ref{sec:curvature}. We first work with independent
parameters, then use the resulting inequalities to bound the curve.
Finally, two monotonicity properties extend the initial comparisons
in Proposition~\ref{lem:finite} to every index.
Throughout, $t=Y+q$ when $Y,q$ are independent variables, and
$g_A,v_A$ are defined by \eqref{eq:block}.

\subsection{Signs of the terms}

\begin{proposition}\label{lem:trace-sign}
Suppose $0<Y\leq1$ and $2/5\leq q/Y\leq1/2$. Then $v_n<0$
for every $n\geq4$. At a point of the curve satisfying these
bounds, $v_A>0$, the positive-average representation
\eqref{eq:trace-pairs} holds, and $0<Y'(t)<g_A<g_2$.
\end{proposition}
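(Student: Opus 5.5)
The proof has three parts. First comes a purely algebraic sign statement for $v_n$, $n\geq4$, on the independent domain. The remaining assertions then follow along the curve from the vanishing moment in \eqref{eq:mean-trace} and a monotonicity property of $g_n$.

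\emph{Step 1: $v_n<0$ for $n\geq4$.} Write $t=Y+q$ and
$$
v_n=\frac{Y}{Y+n+1}\,(1-s_n),\quad s_n=\frac{q(Y+n+1)e_n}{Y}.
$$
Since $e_{n+1}/e_n=(Y+n+1)/(t+n+1)$, we get
$$
\frac{s_{n+1}}{s_n}=\frac{Y+n+2}{t+n+1}>1,
$$
because $q<1$. So $s_n$ increases with $n$, and it suffices to prove $s_4>1$, that is,
$$
q\,(Y+1)(Y+2)(Y+3)(Y+4)(Y+5)>t(t+1)(t+2)(t+3)(t+4).
$$
Put $q=\rho Y$ with $2/5\leq\rho\leq1/2$ and $0<Y\leq1$. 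The left side divided by $Y$ involves the factor $\rho/(1+\rho)\geq2/7$ times $\prod_{k=1}^4(Y+k)/(t+k)\cdot(Y+5)$.

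As $Y\to0$ this tends to at least $5\cdot2/7>1$. At $Y=1$, $\rho=2/5$ a quick evaluation gives about $1.06$. I would clear denominators to get a polynomial inequality in $(Y,\rho)$ on the rectangle and verify it by a monotonicity/concavity argument in each variable, in the style of Section~\ref{sec:bounds}. This is the only step with real computation, and the one I expect to be the main obstacle. It is tight near $Y=1$, $\rho=2/5$, so the worst corner must be located carefully. My first attempt would be to show that $\log s_4$ is monotone in $\rho$ for fixed $Y$, reducing to the edge $\rho=2/5$ (or $1/2$), and then to a one-variable polynomial in $Y$.

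\emph{Step 2: $v_A>0$ on the curve.} At a point of the curve in the stated range, the weights satisfy $w_n>0$ and $w_3/w_2=\lambda$ by \eqref{eq:first-ratio}. Then \eqref{eq:mean-trace} gives
$$
(1+\lambda)w_2v_A=w_2v_2+w_3v_3=-\sum_{n\geq4}w_nv_n>0,
$$
by Step 1. Hence $v_A>0$, and the derivation of \eqref{eq:trace-pairs} is purely algebraic once $v_A>0>v_n$. Its outputs are $\theta_n>0$, $\sum_{n\geq4}\theta_n=1$, and $Y'=\sum\theta_nr_n$, where $r_n$ is the intercept at $v=0$ of the segment joining $(v_A,g_A)$ and $(v_n,g_n)$.

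\emph{Step 3: the slope bounds.} We have
$$
\frac{g_{n+1}}{g_n}=\frac{n(Y+n+1)}{(n+1)(t+n)},
$$
and
$$
(n+1)(t+n)-n(Y+n+1)=nq+Y+q>0.
$$
So $g_n$ is strictly decreasing and positive. Since the intercept $r_n$ lies strictly between $g_n$ and $g_A$, and $g_A$ is a convex combination of $g_2>g_3$ with $\lambda>0$, we obtain
$$
0<g_n<r_n<g_A<g_2.
$$
Averaging with the weights $\theta_n$ then yields $0<Y'(t)<g_A<g_2$.
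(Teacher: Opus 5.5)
Your proof follows the paper's route step for step. Your $s_n$ is $q/b_n$ in the paper's notation $b_n=u_n/e_n$, so $s_4=1/T(Y,\rho)$. Your ratio $s_{n+1}/s_n=(Y+n+2)/(t+n+1)$ is the paper's $b_{n+1}/b_n<1$. Steps 2 and 3 are exactly how the paper obtains $v_A>0$, the representation \eqref{eq:trace-pairs}, and $g_n<r_n<g_A<g_2$.

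The one real gap is Step 1. It carries all the nontrivial content of the proposition, yet you give only a plan and a floating-point value at the corner, not a proof of $s_4>1$. The plan works as you guessed, and you do not need to clear denominators. Write
\[
\log s_4=\log\frac{\rho}{1+\rho}+\log(Y+5)+\sum_{k=1}^4\log\frac{Y+k}{(1+\rho)Y+k}.
\]
For $0<Y\le1$ and $2/5\le\rho\le1/2$,
\[
\frac{\partial\log s_4}{\partial\rho}=\frac1{\rho(1+\rho)}-\sum_{k=1}^4\frac{Y}{(1+\rho)Y+k}\ge\frac43-\sum_{k=1}^4\frac1{k+1}=\frac1{20}.
\]
So $s_4$ increases in $\rho$, and the relevant edge is $\rho=2/5$, not $1/2$. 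On that edge,
\[
\frac{\partial\log s_4}{\partial Y}=\frac1{Y+5}-\frac25\sum_{k=1}^4\frac{k}{(Y+k)(7Y/5+k)}<0.
\]
This holds because each summand decreases in $Y$, and the first three already exceed $1/6$ at $Y=1$ (they equal $5/24$, $10/51$, $15/88$), while $1/(Y+5)\le1/5$. Hence $s_4\ge s_4(1,2/5)=12500/11781>1$ on the whole rectangle. This is precisely the paper's bound $T(1,2/5)=11781/12500<1$.

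The roughly $6\%$ margin at the corner is enough for these crude estimates. With this inserted, your proof is complete.
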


\begin{proof}
Set $a=q/Y$ and $b_n=u_n/e_n$. Direct cancellation gives
$$
\frac{b_{n+1}}{b_n}=\frac{t+n+1}{Y+n+2}<1,
\quad \frac{b_4}{q}=\frac{1+a}{a}
\frac{\prod_{j=1}^4((1+a)Y+j)}{\prod_{j=1}^5(Y+j)}=:T(Y,a).
$$
On the stated rectangle,
$$
\frac{\partial\log T}{\partial a}
=-\frac1{a(1+a)}+\sum_{j=1}^4\frac{Y}{(1+a)Y+j}
\leq-\frac43+\sum_{j=1}^4\frac1{j+1}=-\frac1{20}.
$$
At $a=2/5$,
$$
\frac{\partial\log T}{\partial Y}
=\frac25\sum_{j=1}^4\frac{j}{(7Y/5+j)(Y+j)}-\frac1{Y+5}>0.
$$
Indeed, the first three summands inside the sum exceed $1/6$,
whereas $1/(Y+5)\leq1/5$. Hence
$b_4/q\leq T(1,2/5)=11781/12500<1$, proving $v_n=e_n(b_n-q)<0$.
On the curve the second identity in \eqref{eq:mean-trace} reads
$$
Wv_A+\sum_{n=4}^\infty w_nv_n=0,
\quad W=w_2+w_3=(1+\lambda)w_2.
$$
It implies $v_A>0$; substitution into the definitions of $\theta_n$
and $r_n$ gives the two sums in \eqref{eq:trace-pairs}. Finally,
$$
\frac{g_{n+1}}{g_n}=
\frac{n(Y+n+1)}{(n+1)(t+n)}<1.
$$
Thus $g_n<r_n<g_A<g_2$ for $n\geq4$.
\end{proof}

\subsection{Comparing consecutive terms for the slope bound}

\begin{proposition}\label{lem:increment-ratio}
Under the hypotheses of Proposition~\ref{lem:trace-sign}, set
$$
d_n=g_n-g_{n+1},\quad h_n=v_n-v_{n+1},\quad n\geq4.
$$
Once $h_n$ becomes nonpositive it remains nonpositive. If $n\geq6$
and $h_n,h_{n+1}>0$, then
$$
\frac{d_{n+1}}{h_{n+1}}>\frac{d_n}{h_n}.
$$
\end{proposition}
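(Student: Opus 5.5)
The plan is to make every quantity explicit. Both statements then reduce to comparing ratios of consecutive terms, which are rational in $n,Y,q$. From \eqref{eq:components}, $e_{n+1}=e_n(Y+n+1)/(t+n+1)$ and $g_{n+1}/g_n=n(Y+n+1)/\{(n+1)(t+n)\}$. Using $(n+1)(t+n)-n(Y+n+1)=nq+t$, this gives
$$
d_n=g_n\frac{nq+t}{(n+1)(t+n)},\qquad
h_n=\frac{Y}{(Y+n+1)(Y+n+2)}-\frac{q^2e_n}{t+n+1}
=\frac{Y(1-\tau_n)}{(Y+n+1)(Y+n+2)},
$$
where $\tau_n=q^2e_n(Y+n+1)(Y+n+2)/\{Y(t+n+1)\}>0$.

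\emph{Sign persistence.} We have $h_n>0$ if and only if $\tau_n<1$. Direct cancellation gives
$$
\frac{\tau_{n+1}}{\tau_n}=\frac{Y+n+3}{t+n+2}=:\rho_n\geq1,
$$
because $q=t-Y\leq1$; in fact $\rho_n-1=(1-q)/(t+n+2)>0$. Hence $(\tau_n)$ is increasing. Once $\tau_n\geq1$, that is $h_n\leq0$, it stays so. This settles the first assertion.

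\emph{Ratio monotonicity.} Write
$$
\frac{d_n}{h_n}=\frac{\Phi_n}{Y(1-\tau_n)},\qquad
\Phi_n=g_n\frac{(nq+t)(Y+n+1)(Y+n+2)}{(n+1)(t+n)}.
$$
The required inequality becomes $\Phi_{n+1}(1-\tau_n)>\Phi_n(1-\tau_{n+1})=\Phi_n(1-\rho_n\tau_n)$. Here both $1-\tau_n$ and $1-\rho_n\tau_n$ are positive, since $h_n,h_{n+1}>0$. A short computation gives
$$
\frac{\Phi_{n+1}}{\Phi_n}=\frac{n(Y+n+3)((n+1)q+t)}{(n+2)(t+n+1)(nq+t)}=:\pi_n .
$$
Cross-multiplying, the claim is $(\pi_n-1)+\tau_n(\rho_n-\pi_n)>0$. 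Expanding, $\pi_n-1$ has the sign of
$$
Q_n:=q(1-q)n^2+q(1-3q-2Y)n-2t(t+1),
$$
after substituting $t=Y+q$. The leading coefficient of $Q_n$ is positive and $Q_n$ is increasing for $n\geq6$. Whenever $Q_6\geq0$ and $\rho_n\geq\pi_n$, the conclusion is immediate. I will check $\rho_n-\pi_n$ explicitly; it is again a polynomial sign, and it is positive when $\pi_n$ is close to $1$.

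\emph{Main obstacle.} The hard step is the region where $Q_n<0$. This genuinely occurs: at $Y=1$, $q=1/2$, $n=6$ one finds $Q_6<0$, with a relative deficit of about $5\%$. There the positive contribution $\tau_n(\rho_n-\pi_n)$ must compensate, so a lower bound for $\tau_n$ is needed. I would first bound $e_n=\prod_{k=0}^n(1+q/(Y+k))^{-1}$ from below by $\exp(-qH_n(Y))$, or by a cruder rational bound. I would then show that $Q_n<0$ forces $q$ to be close to $1/2$ and $n$ to be small. In that subregion $\tau_n$ is bounded below by an explicit constant (about $0.67$ at the extreme point), which more than offsets the deficit of $\pi_n$ below $1$. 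For larger $n$, or smaller $q$ relative to $Y$, the inequality $Q_n\geq0$ holds directly from the domain bounds $2Y/5\leq q\leq Y/2$ and $Y\leq1$. I expect the argument to reduce, after this case split, to finitely many polynomial sign checks in $(Y,q)$ at the first offending indices $n=6,7,\dots$. These can be handled by monotonicity in $n$ and concavity in $Y$, in the style of Section~\ref{sec:algebra}.
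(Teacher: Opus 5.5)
Your setup matches the paper's. Your $\tau_n,\rho_n,\pi_n$ are its $z_n,\rho_n,p_n$. Sign persistence via $\tau_{n+1}=\rho_n\tau_n$ is the same argument, and so is the reduction to $(\pi_n-1)+\tau_n(\rho_n-\pi_n)>0$; your $Q_n$ equals $-Y(YC_n-E_n)$ in the paper's notation. Your treatment of $n\geq9$ (monotonicity of $Q_n$ plus $Q_9>0$) is also fine. Some smaller points need fixing:
\begin{itemize}
\item You should prove $\rho_n>\pi_n$ rather than defer it. It is immediate from $\rho_n-\pi_n=t(Y+n+3)(2Y+nq+n+2q+2)/\{(n+2)(t+nq)(t+n+1)(t+n+2)\}$.
\item Your description of the bad set is inaccurate. $Q_n<0$ is driven by $Y$ being near $1$, not by $q/Y$. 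At $Y=1$, $q=2/5$ one has $Q_6=-84/25$ and $Q_7=-28/25$, so smaller $q$ relative to $Y$ does not give $Q_n\geq0$.
\item At $(Y,q)=(1,1/2)$ one gets $\pi_6=300/306$, a deficit of about $2\%$, not $5\%$.
\end{itemize}

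The genuine gap is the only hard case, $6\leq n\leq8$ with $Q_n<0$. There you give neither a usable lower bound for $\tau_n$ nor a verification. The value of $\tau_6$ at one point is not a bound on the region. The estimate $e_n\geq\exp(-qH_n(Y))$ turns the target into a transcendental inequality that would still have to be proved on a two-dimensional set, and the ``cruder rational bound'' is never specified.

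The idea you are missing is that the hypotheses already give $v_n<0$ (Proposition~\ref{lem:trace-sign}), that is, $qe_n>Y/(Y+n+1)$. This yields the low-degree rational bound $\tau_n>q(Y+n+2)/(t+n+1)$ without touching the product defining $e_n$. Since $\rho_n-\pi_n>0$, it suffices to prove the inequality with this bound in place of $\tau_n$. After cancelling positive factors this becomes $\Psi_n(Y,a)<1$ with $a=q/Y$. The paper shows that $\Psi_n$ increases in $Y$ on $\{YC_n>E_n\}$, so only $Y=1$ remains. There the numerators factor as $(1-a)$ times cubics that increase in $a$ and are positive at $a=2/5$.

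Your route could in principle be finished by brute-force rational checks, since for fixed $n$ everything is rational in $(Y,q)$. That would involve the nine-factor product in $e_8$, and it is not carried out. As written, the proposal does not prove the second assertion.
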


\begin{proof}
The definitions give
\begin{equation}\label{eq:increments}
d_n=\frac{e_n(t+nq)}{n(n+1)},
\quad h_n=\frac{Y}{(Y+n+1)(Y+n+2)}-\frac{q^2e_n}{t+n+1}.
\end{equation}
Put
$$
z_n=\frac{q^2e_n(Y+n+1)(Y+n+2)}{Y(t+n+1)},
\quad \rho_n=\frac{Y+n+3}{t+n+2}>1.
$$
Then $z_{n+1}=\rho_nz_n$ and
$h_n=Y(1-z_n)/((Y+n+1)(Y+n+2))$, proving persistence of the sign.
When both increments are positive, cancellation gives
$$
\frac{d_{n+1}/h_{n+1}}{d_n/h_n}
=p_n\frac{1-z_n}{1-\rho_nz_n},
\quad p_n=\frac{n(Y+n+3)\{t+(n+1)q\}}
{(n+2)(t+n+1)(t+nq)}.
$$
It is enough to prove $p_n-1+z_n(\rho_n-p_n)>0$. Here
\begin{equation}\label{eq:increment-positive-factor}
\rho_n-p_n=
\frac{t(Y+n+3)(2Y+nq+n+2q+2)}
{(n+2)(t+nq)(t+n+1)(t+n+2)}>0.
\end{equation}
Writing $a=q/Y$, set
$$
C_n=a^2(n+1)(n+2)+2a(n+2)+2,
\quad E_n=a(n+2)(n-1)-2.
$$
Then
$$
p_n-1=-\frac{Y(YC_n-E_n)}{(n+2)(t+nq)(t+n+1)}.
$$
For $n\geq9$, the polynomial $C_n-E_n$ is convex in $a$.
Its endpoint values, multiplied by $25$ and $4$, are respectively
$-6n^2+22n+168$ and $-n^2+5n+30$, both negative.
Since $C_n>0$ and $Y\leq1$, also $YC_n-E_n<0$, so $p_n>1$.

For $6\leq n\leq8$, only the case $YC_n>E_n$ remains.
The inequality $v_n<0$ gives
$z_n>q(Y+n+2)/(t+n+1)=:z_n^0$.
After positive factors are cancelled,
$p_n-1+z_n^0(\rho_n-p_n)>0$ amounts to $\Psi_n(Y,a)<1$, where
$$
\Psi_n(Y,a)=\frac{(YC_n-E_n)(t+n+1)(t+n+2)}
{a(1+a)Y(Y+n+2)(Y+n+3)\{n+2+Y[2+a(n+2)]\}}.
$$
On the set $YC_n>E_n$, this function increases with $Y$.
For completeness, its logarithmic derivative is
$$
\begin{aligned}
\frac{\partial\log\Psi_n}{\partial Y}
={}&\frac{E_n}{Y(YC_n-E_n)}
+\frac{1+a}{t+n+1}+\frac{1+a}{t+n+2}\\[7pt]
&-\frac1{Y+n+2}-\frac1{Y+n+3}
-\frac{2+a(n+2)}{n+2+Y[2+a(n+2)]}.
\end{aligned}
$$
The first term is at least $13/16$. Indeed,
$$
29E_n-13C_n=(n+2)a\{(29-13a)n-55-13a\}-84\geq0;
$$
the expression increases with $n\geq4$, and at $n=4$ equals
$-6(5a-2)(13a-7)\geq0$. Moreover,
$0<Y(YC_n-E_n)\leq C_n-E_n\leq16E_n/13$.
The remaining terms give
$$
\frac{\partial\log\Psi_n}{\partial Y}
\geq\frac{13}{16}
+\frac75\left(\frac1{n+5/2}+\frac1{n+7/2}\right)
-\frac12-\frac3{n+2}-\frac1{n+3}>0.
$$
The last rational expression has value $7829/109200$ at $n=4$,
and forward difference
$$
\frac{6(4n^3+56n^2+234n+301)}
{5(n+2)(n+3)(n+4)(2n+5)(2n+9)}>0.
$$
At $Y=1$, the numerator of
$p_n-1+z_n^0(\rho_n-p_n)$, after positive factors are removed,
is respectively
$$
\begin{array}{c|l}
n&\text{numerator factor}\\[7pt]\hline
6&8(1-a)(7a^3+33a^2+284a-36)\\[7pt]
7&18(1-a)(4a^3+23a^2+223a-20)\\[7pt]
8&10(1-a)(9a^3+61a^2+656a-44).
\end{array}
$$
Each cubic increases with $a>0$ and is positive at $a=2/5$.
This proves the assertion.
\end{proof}

\subsection{The parameter domain along the curve}

The preceding comparisons will now restrict the position of the curve.
The resulting domain is the one required in
Proposition~\ref{lem:finite}.

\begin{proposition}\label{lem:ratio-bound}
Along the curve,
\begin{equation}\label{eq:ratio-bound}
\frac{Y(t)}t<\frac57,\quad 0<t\leq\frac32.
\end{equation}
\end{proposition}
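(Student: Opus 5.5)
The strategy is a first-crossing argument that contradicts the vanishing moment in \eqref{eq:mean-trace}. Put $a(t)=q(t)/Y(t)$. The bound \eqref{eq:ratio-bound} is equivalent to $a(t)>2/5$. The function $a$ is continuous on $(0,3/2]$ by Lemma~\ref{lem:contact}, and $a(3/2)=1/2$. Proposition~\ref{lem:basic-bounds} gives $Y\geq 2t/3$, that is $a\leq1/2$. Suppose $a\leq 2/5$ somewhere, and let $t_0$ be the largest such point. Then $a(t_0)=2/5$, and $t_0<3/2$. On $[t_0,3/2]$ we have $2/5\leq a\leq 1/2$ and $Y\leq1$, so Proposition~\ref{lem:trace-sign} gives $v_n<0$ for all $n\geq4$ at $t_0$. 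It therefore suffices to show that at every point with $q=2Y/5$, $0<Y\leq1$, $t=7Y/5\leq3/2$, the weighted sum $\sum_{n\geq2}w_nv_n$ is strictly negative. This contradicts \eqref{eq:mean-trace}.

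The head terms alone will not suffice. A quick check as $Y\to0$ on the line $q=2Y/5$ gives $v_2\approx Y/21>0$, $v_3\approx -Y/28$ and $\lambda\to2/3$. Hence $v_2+\lambda v_3\approx Y/42>0$, and part of the tail must be used. Normalise by $w_2$. Then $w_3/w_2=\lambda$ by \eqref{eq:first-ratio}, and $w_4/w_3=F/D$ by \eqref{eq:mlr-index3-ratio}. Since $F$ decreases in $X$, the upper bound $X<4(t+1)$ from Proposition~\ref{lem:bessel-bounds} gives an explicit rational lower bound $\mu_0(t,Y)\leq w_4/w_3$. Proposition~\ref{lem:weight-ratios} states that $\log(w_{n+1}/w_n)$ increases in $t$, but not that the ratio increases in $n$, so I would prefer not to need a geometric lower bound for the whole tail. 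Instead I would first try to show that
\[
v_2+\lambda v_3+\lambda\mu_0 v_4<0
\]
on the line $q=2Y/5$. Since $v_n<0$ for $n\geq4$, discarding $n\geq5$ only increases the sum, so this inequality already gives $\sum w_nv_n<0$.

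This is a one-variable rational inequality in $Y\in(0,5/7\cdot 3/2]\cap(0,1]$. After clearing the positive denominators I would verify it as in Section~\ref{sec:algebra}, using endpoint values together with concavity or monotonicity of the resulting polynomial.

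The main obstacle is the regime of small $Y$ (that is, $t\to0$). In the leading order the margin is thin: $\lambda\mu_0 v_4$ is roughly $-0.023Y$ against $+0.024Y$. With the crude bound $X<4(t+1)$ the term $v_4$ may not win. If it fails, I would first sharpen $\mu_0$ near $t=0$ by using $X(0)=j_{1,1}^2/4$ together with the analytic extension and the slope bound $c<2/(t+2)$ from Proposition~\ref{lem:bessel-bounds}, in the form $X(t)\leq X(0)e^{\int c}$. Failing that, I would bring in $w_5$ through the analogue of \eqref{eq:mlr-euler-ratio} with $h(X)>0$ and a lower bound on $h$, obtained from the product in \eqref{eq:mlr-h-product} truncated to $h(X)\geq X/(\zeta_1-X)$.

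For $t\geq7/6$ there is a cheap alternative. The affine bound \eqref{eq:affine-contact-bound} already gives $Y<t/2+1/4\leq5t/7$. The algebraic verification is therefore only needed for $t_0<7/6$, that is $Y<5/6$, which shrinks the range and helps the concavity argument.
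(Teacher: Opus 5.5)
Your reduction to a sign of $\sum_n w_nv_n$ on the line $q=2Y/5$ breaks down near $t=0$. The problem is not crude constants: there the sum you want to show negative is actually positive.

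Your leading-order estimate is off. As $t\to0$ one has $\lambda\to2/3$ and $v_4\sim-3Y/35$. Also $w_4/w_3=F/D\to(6-X_0)/8$ with $X_0=j_{1,1}^2/4\approx3.67$. So even with the exact $X_0$, the fourth term is about $-0.0166\,Y$, and with $X<4(t+1)$ it is about $-Y/70$. This is against $+Y/42\approx0.0238\,Y$ from the head. Hence $v_2+\lambda v_3+\lambda\mu_0v_4<0$ is false for small $Y$, and sharpening $\mu_0$ cannot save it.

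Adding $w_5,w_6,\dots$ does not help either. The ratios $w_{n+1}/w_n$ given by the recursion from the equation for $Q$ (or by \eqref{eq:mlr-euler-ratio}) are continuous in $(t,Y,X)$. Their limits as $t,Y\to0$ depend only on $X_0$, not on $Y/t$: $w_3/w_2\to2/3$, $w_4/w_2\to(6-X_0)/12$, $w_5/w_2\to(4-X_0)/10$, and so on. Call the limiting normalized weights $\omega_n$. Passing to the limit in the vanishing moment along the actual curve gives $\sum_n\omega_n/(n+1)=1-s_0$, where $s_0=\lim_{t\downarrow0}Y/t=1/(1-J_0(j_{1,1}))$.

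On the line $q=2Y/5$, however, $v_n/Y\to1/(n+1)-2/7$. So with the weights these ratios prescribe, the weighted sum is $Y(5/7-s_0)+o(Y)>0$. Numerically, the whole tail $n\geq4$ contributes about $-0.0211\,w_2Y$, against $+0.0238\,w_2Y$ from the head. Since the tail terms are negative, any truncation using valid lower bounds on tail weights is even larger.

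A static contradiction would therefore have to go the other way: head beats tail, via upper bounds on the tail weights, with relative margin only $5/7-s_0\approx1.4\cdot10^{-3}$ at $t=0$. Your remark that $t\geq7/6$ follows from \eqref{eq:affine-contact-bound} is correct, but it does not reach this region.

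The missing idea is to use the slope identity, not the vanishing moment alone, in a first-crossing argument for $s=Y/t$. The paper first computes $s(0^+)=1/(1-J_0(j_{1,1}))<5/7$, by dominated convergence in the defining series together with $J_0(j_{1,1})<-2/5$. This disposes of small $t$ without any uniform inequality there. At a first point with $s=5/7$ (so $q=2Y/5$ and $Y<5/6$), the vanishing moment serves only to produce the positive averaging weights $\theta_n$ of \eqref{eq:trace-pairs}. Then $Y'=\sum\theta_nr_n<5/7$ follows from $r_n<5/7$ for all $n\geq4$. For $n\geq6$, $Z_6>0$ and Proposition~\ref{lem:increment-ratio} make $r_n$ decrease. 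For $n=4,5,6$, exact polynomials $T_4,T_5,T_6$ with positive coefficients settle the sign. Hence $s'=(Y'-5/7)/t<0$ at that point, which contradicts the crossing.
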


\begin{proof}
By Proposition~\ref{lem:bessel-bounds}, $X(t)\to X_0=j_{1,1}^2/4$
as $t\downarrow0$.

Put $s(t)=Y(t)/t$. The defining equation gives
$$
0=\frac1{s(t)}+
\sum_{k=1}^\infty\frac{t(-X(t))^k}{k!(t)_k(ts(t)+k)}.
$$
Since $2/3\leq s(t)<1$, the summands are bounded in absolute value
by $L^k/(k!)^2$ on a short initial interval, for a fixed $L$.
Dominated convergence \cite[Theorem~2.24]{Folland1999} shows that
every convergent subsequence of
$s(t)$ has the same limit. Compactness of $[2/3,1]$ therefore gives
\begin{equation}\label{eq:ratio-left}
\lim_{t\downarrow0}s(t)=\frac1{1-J_0(j_{1,1})}<\frac57.
\end{equation}
The last inequality follows from the elementary bound
$J_0(j_{1,1})<-2/5$ proved in Proposition~\ref{lem:bessel-bounds}.

Suppose that $s=5/7$ at a point of the curve. At that point
\begin{equation}\label{eq:barrier-domain}
t=7Y/5,\quad q=2Y/5,\quad 0<Y<5/6.
\end{equation}
The last inequality follows by integrating $Y'>1/2$ backwards
from $Y(3/2)=1$. Proposition~\ref{lem:trace-sign} applies, so it suffices
to prove $r_n<5/7$ for every $n\geq4$.
Set
$$
Z_n=(v_A-v_n)d_n-(g_A-g_n)h_n.
$$
Subtraction gives
$$
r_{n+1}-r_n=-\frac{v_AZ_n}{(v_A-v_n)(v_A-v_{n+1})}.
$$
If $h_n\leq0$ the sign is negative. Otherwise put
$s_n=(g_A-g_n)/(v_A-v_n)$. Whenever $Z_n>0$, one has
$d_n/h_n>s_n$, and
$$
s_{n+1}=\frac{(v_A-v_n)s_n+d_n}{v_A-v_n+h_n}
$$
lies between $s_n$ and $d_n/h_n$. Proposition~\ref{lem:increment-ratio}
therefore propagates $Z_6>0$ to strict decrease of $r_n$ for $n\geq6$.

Here the starting sign has an elementary polynomial proof.
On \eqref{eq:barrier-domain}, exact expansion gives
$$
Z_6=\frac{625Y^2(Y+1)(Y+2)(Y+3)(2Y-5)F(Y)}
{1029(Y+7)(Y+8)(17Y+25)\prod_{k=1}^6(7Y+5k)^2},
$$
where
$$
\begin{aligned}
F(Y)={}&160768750Y^{11}+7196784375Y^{10}
+122777933929Y^9\\[7pt]
&+1111731243910Y^8+6006573685000Y^7
+20081074013875Y^6\\[7pt]
&+40431416015625Y^5+41717794150000Y^4
+1814329125000Y^3\\[7pt]
&-43824112500000Y^2-41465418750000Y-11930625000000.
\end{aligned}
$$
Every nonconstant term of $F(Y)/Y^3$ has positive derivative for
$Y>0$. Moreover,
$$
F(5/6)=-\frac{2795043934376107421875}{90699264}<0.
$$
Thus $F(Y)<0$ on \eqref{eq:barrier-domain}, and $Z_6>0$.
Consequently $\sup_{n\geq4}r_n=\max(r_4,r_5,r_6)$.

To check these three values, put
$$
D(Y)=(7Y+5)^2(7Y+10)^2(7Y+15)^2(17Y+25)
$$
and $T_n=-v_n(5/7-g_A)+v_A(5/7-g_n)$.
The identities
$$
\begin{aligned}
T_4&=\frac{5Y^2(5-2Y)F_4(Y)}
{588(Y+4)(Y+5)(7Y+20)D(Y)},\\[7pt]
T_5&=\frac{5Y^2(5-2Y)F_5(Y)}
{294(Y+4)(Y+6)(7Y+20)(7Y+25)D(Y)},\\[7pt]
T_6&=\frac{5Y^2(5-2Y)F_6(Y)}
{147(Y+4)(Y+7)(7Y+20)(7Y+25)(7Y+30)D(Y)}
\end{aligned}
$$
follow by expansion of the finite products in \eqref{eq:components}.
Their numerators are
$$
\begin{aligned}
F_4(Y)={}&3609375Y^9+51186625Y^8+356632500Y^7
+1624200018Y^6\\[7pt]
&+5136726915Y^5+11051480025Y^4+15434489250Y^3\\[7pt]
&+13056552500Y^2+5892525000Y+1030500000,
\end{aligned}
$$
$$
\begin{aligned}
F_5(Y)={}&12140625Y^{10}+199921750Y^9+1720639375Y^8
+10443869856Y^7\\[7pt]
&+45966978675Y^6+140214297150Y^5+284869787625Y^4\\[7pt]
&+371482047500Y^3+293296962500Y^2
+124359000000Y+20700000000,
\end{aligned}
$$
$$
\begin{aligned}
F_6(Y)={}&35546875Y^{11}+661124625Y^{10}+6729102625Y^9
+53094414709Y^8\\[7pt]
&+321318004495Y^7+1365848355175Y^6+3907924666875Y^5\\[7pt]
&+7386400099375Y^4+8991672431250Y^3+6679280062500Y^2\\[7pt]
&+2689588125000Y+429975000000.
\end{aligned}
$$
All their coefficients are positive, and all remaining factors have
the displayed sign. Hence $5/7-r_n=T_n/(v_A-v_n)>0$ for
$n=4,5,6$, and therefore for all $n\geq4$. Equation
\eqref{eq:trace-pairs} gives $Y'(t)<5/7$ whenever $s(t)=5/7$.
By \eqref{eq:ratio-left}, the first point where $s(t)=5/7$ would have
$s'(t)\geq0$, whereas $s'(t)=(Y'(t)-5/7)/t<0$ there.
The endpoint $t=3/2$ is excluded since $s(3/2)=2/3$.
This proves \eqref{eq:ratio-bound}.
\end{proof}

\begin{corollary}\label{cor:domain}
At every point of the curve with $0<t\leq3/2$,
\begin{equation}\label{eq:domain}
0<Y\leq1,\quad \frac{2Y}{5}<q\leq\frac Y2,
\quad Y-q\leq\frac12,\quad q<U(Y)=\frac{3Y}{7}+\frac{3Y^2}{14}.
\end{equation}
Moreover $v_A>0$.
\end{corollary}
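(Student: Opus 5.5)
The corollary collects the bounds already established; I will derive each inequality from an earlier result and then finish with the sign of $v_A$.

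\emph{The first three bounds.} From Proposition~\ref{lem:basic-bounds} we have $0<Y\leq1$ and $q\leq1/2$. We also have $2t/3\leq Y$, which is equivalent to $q=t-Y\leq Y/2$. From Proposition~\ref{lem:ratio-bound} we have $Y/t<5/7$. This is $5Y<7(Y+q)$, that is $q>2Y/5$.

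\emph{The bound $Y-q\leq1/2$.} Note that $Y-q=2Y-t$. Proposition~\ref{lem:basic-bounds} gives $Y<t/2+1/4$ for $t<3/2$, hence $2Y-t<1/2$. At $t=3/2$ we have $Y=1$, and then $2Y-t=1/2$ with equality. So the bound holds on the whole interval.

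\emph{The bound $q<U(Y)$.} This is the step I expect to be delicate, because no earlier result states it directly. I would argue with a barrier, as in Proposition~\ref{lem:ratio-bound}, using $\phi(t)=U(Y(t))-q(t)$.

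First, near $t=0$: $Y\sim s_0t$ with $s_0<5/7$, so $q\sim(1-s_0)t>\tfrac27 t$. Then $U(Y)\approx\tfrac37 s_0t$, and $\tfrac37 s_0<\tfrac15$ while $1-s_0>\tfrac27$. So $q>U(Y)$ near zero, which is the wrong direction. This means $q<U(Y)$ cannot be literally what holds near zero. I must therefore recheck the formula: with $q\leq Y/2$ we have $U(Y)=3Y/7+3Y^2/14$, and near zero $U\approx 3Y/7<Y/2$. So the target $q<U$ is stronger than $q\leq Y/2$ near zero. Since $q\geq 2Y/5<3Y/7$ and $q/Y\to (1-s_0)/s_0$, what is needed is $(1-s_0)/s_0<3/7$, i.e. $s_0>7/10$.

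I would obtain this from $J_0(j_{1,1})\approx-0.4028$, which gives $s_0=1/(1-J_0(j_{1,1}))\approx0.7129>0.7$. This requires an \emph{upper} bound $J_0(j_{1,1})<-3/7$? No: $s_0>7/10$ requires $1-J_0<10/7$, i.e. $J_0(j_{1,1})>-3/7$. So near zero I need a lower bound $J_0(j_{1,1})>-3/7$, provable by the same truncated alternating series at a rational point slightly beyond $j_{1,1}$ (using $J_0$ decreasing there, with an upper bound on $X_0$).

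At $t=3/2$ we have $q=1/2$ and $U(1)=9/14>1/2$, so the inequality holds at that end. For the interior, I would suppose a first point where $q=U(Y)$ and derive a contradiction from the slope. There $\phi'=U'(Y)Y'-(1-Y')=Y'(1+U'(Y))-1\leq0$ is needed, i.e. $Y'\leq 1/(1+3/7+3Y/7)$. I would contradict this by showing $Y'>1/(1+U'(Y))$ on that curve. This would use the lower-slope machinery of Proposition~\ref{lem:basic-bounds}, or rather, pointing the other way, the $r_n$ averaging. Since $\phi$ crosses from positive to zero, we need $\phi'\leq0$ there; so it suffices to show $Y'>1/(1+U'(Y))$ whenever $q=U(Y)$. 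I would try bounding $Y'=\sum\theta_n r_n>\inf_n r_n$ below, via the analogue of the $T_n$ polynomial computations with $5/7$ replaced by $1/(1+U'(Y))$ on the curve $t=Y+U(Y)$. I expect the monotonicity of $r_n$ in $n$, from Proposition~\ref{lem:increment-ratio} applied with the reversed sign of $Z_6$, to reduce this to finitely many polynomial checks. This is the main obstacle.

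\emph{Sign of $v_A$.} Once the domain is established, the hypotheses of Proposition~\ref{lem:trace-sign} hold along the whole curve, and that proposition yields $v_A>0$.
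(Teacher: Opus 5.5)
Your derivations of $0<Y\le1$, $2Y/5<q\le Y/2$ and $Y-q\le1/2$ from Propositions~\ref{lem:basic-bounds} and~\ref{lem:ratio-bound} are correct, and they match what the paper uses implicitly. The gap is in $q<U(Y)$. The barrier argument is not carried out, and the mechanism you propose for its key step cannot work.

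At a first crossing you need a \emph{lower} bound $Y'>1/(1+U'(Y))=7/(10+3Y)$. You propose to get it from $Y'=\sum\theta_nr_n>\inf_nr_n$. But $e_n=\prod_{k\le n}(1+q/(Y+k))^{-1}\to0$, so $v_n\to0$ and $g_n\to0$, and therefore $r_n\to0$. The infimum is zero, and no finite polynomial check can change that. The propagation of Proposition~\ref{lem:increment-ratio} only works in the direction giving $\sup_nr_n=\max(r_4,r_5,r_6)$, which is why Proposition~\ref{lem:ratio-bound} obtains an \emph{upper} bound $5/7$. A lower bound on $Y'$ would require knowing how the weights $\theta_n$ are distributed, which the rational comparisons do not provide. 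The only available lower bound, $Y'>1/2$, is too weak everywhere, since $7/(10+3Y)\ge7/13$. You would also need a new estimate $J_0(j_{1,1})>-3/7$ at the left end.

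The missing idea is that no barrier is needed: $q<U(Y)$ is an algebraic consequence of $v_A>0$. Proposition~\ref{lem:trace-sign} requires only $0<Y\le1$ and $2/5\le q/Y\le1/2$, which you had already established. So $v_A>0$ is available \emph{before} the last bound, not after it. With $a=q/Y$, cancellation in the two-term average gives
\[
v_A=\frac{Y(q-1)V(Y,a)}{(Y+4)(a+1)(t+1)(t+2)(t+3)(q+3Y+5)},
\]
with an explicit polynomial $V$. For $a\ge0$ and $0<Y\le1$ it satisfies $\partial V/\partial a>84-2Y^4>0$, and also $V(Y,3/7+3Y/14)>0$. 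Since $q\le1/2$ makes $q-1<0$ and every other factor is positive, $v_A>0$ forces $V(Y,a)<0$. Strict monotonicity in $a$ then gives $a<3/7+3Y/14$, which is exactly $q<U(Y)$. The behaviour near $t=0$ that worried you is handled automatically by this argument.
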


\begin{proof}
Only the last bound needs proof. Put $a=q/Y$. Cancellation in
\eqref{eq:block} gives
$$
v_A=\frac{Y(Ya-1)V(Y,a)}
{(Y+4)(a+1)(t+1)(t+2)(t+3)(Ya+3Y+5)},
$$
where
$$
\begin{aligned}
V(Y,a)={}&-2Y^4a-3Y^4
+Y^3(a^4+7a^3+18a^2+2a-24)\\[7pt]
&+Y^2(13a^3+67a^2+56a-69)\\[7pt]
&+Y(60a^2+130a-84)+84a-36.
\end{aligned}
$$
For $a\geq0$, $0<Y\leq1$, its derivative in $a$ exceeds
$84-2Y^4>0$, and
$$
V\left(Y,\frac37+\frac{3Y}{14}\right)
=\frac{3Y(Y+2)^2}{38416}
\{27Y^4+990Y^3+8606Y^2+20720Y+2352\}>0.
$$
Since $Ya=q\leq1/2$ and $v_A>0$, one has $V(Y,a)<0$.
Strict increase in $a$ proves $a<3/7+3Y/14$.
\end{proof}

\subsection{Comparing consecutive terms for the curvature bound}

The final comparison extends the curvature inequalities in
Proposition~\ref{lem:finite} to every index, as used in
Lemma~\ref{lem:negative-kernels}.

\begin{proposition}\label{lem:curvature-increments}
On the parameter region $0<Y\leq1$, $2/5\leq q/Y\leq1/2$, the
quantities in \eqref{eq:eta-D} satisfy $\eta_n>0$ and $D_n>0$.
Moreover, $\eta_n$ is strictly increasing, and
$D_{n+1}<D_n$ for every $n\geq6$.
\end{proposition}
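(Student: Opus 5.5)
We work with the explicit forms. Since $v_n=u_n-qe_n$ and $g_n=e_n(t+n)/n$, we have
$$
\eta_n=-\frac{v_n}{g_n}=\frac{n}{t+n}\Bigl(q-\frac{b_n}{1}\Bigr),\quad b_n=\frac{u_n}{e_n},
$$
with $b_n$ as in the proof of Proposition~\ref{lem:trace-sign}. Positivity of $\eta_n$ for $n\geq4$ is then exactly $v_n<0$, which that proposition gives on the stated region.

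For strict increase, write $\eta_n=\frac{nq}{t+n}-\frac{n\,b_n}{t+n}$. The first term increases in $n$. For the second, we use $b_{n+1}/b_n=(t+n+1)/(Y+n+2)$. This gives
$$
\frac{(n+1)b_{n+1}/(t+n+1)}{n b_n/(t+n)}=\frac{(n+1)(t+n)}{n(Y+n+2)}.
$$
This ratio is less than one iff $(n+1)(t+n)<n(Y+n+2)$, i.e. $n(q-1)+t<n(1)\cdot 0+\dots$. More precisely, the condition is $nt+n^2+t+n<nY+n^2+2n$, i.e. $nq+t<n$. That holds since $q\leq1/2$ and $t\leq3/2$ imply $nq+t\leq n/2+3/2<n$ for $n\geq4$. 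So $n b_n/(t+n)$ strictly decreases, and $\eta_n$ strictly increases. This immediately gives $D_n=(t+n)(\eta_{n+1}-\eta_n)>0$.

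For $D_{n+1}<D_n$ with $n\geq6$, we split $D_n=D_n^{(1)}+D_n^{(2)}$ according to the two parts of $\eta_n$. The first part is
$$
D_n^{(1)}=(t+n)q\Bigl(\frac{n+1}{t+n+1}-\frac n{t+n}\Bigr)=\frac{qt}{t+n+1}.
$$
It is strictly decreasing. The second part is
$$
D_n^{(2)}=nb_n\Bigl(1-\frac{(n+1)(t+n)}{n(Y+n+2)}\Bigr)=\frac{b_n\,(n-t-nq)}{Y+n+2},
$$
up to the exact algebra.

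Since $b_n$ decays geometrically-like by the factor $(t+n+1)/(Y+n+2)<1$, I would show that
$$
\frac{D^{(2)}_{n+1}}{D^{(2)}_n}=\frac{(t+n+1)\,(n+1-t-(n+1)q)\,(Y+n+2)}{(Y+n+2)(Y+n+3)\,(n-t-nq)}
$$
is less than one. The leftover factor $(n+1)(1-q)-t$ over $n(1-q)-t$ exceeds one by $(1-q)/(n(1-q)-t)$. This must be beaten by $(Y+n+3)/(t+n+1)=1+(2-q)/(t+n+1)$. That reduces to $(1-q)(t+n+1)<(2-q)(n(1-q)-t)$, a linear-in-$n$ inequality. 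It holds for $n\geq6$ given $q\leq1/2$, $t\leq3/2$ after a short check; this is presumably why the threshold is $n\geq6$.

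The main obstacle is getting this last elementary reduction exactly right, including the precise form of $D_n^{(2)}$, and checking the linear inequality at the worst corner of the region, e.g. $q=1/2$ with $t$ maximal. If the crude bound fails at $n=6$, I would instead compare $D_n$ directly as a rational function. I would clear denominators and verify sign via monotonicity in $n$ together with an endpoint check at $n=6$, using $Y\leq1$, $2Y/5\leq q\leq Y/2$.
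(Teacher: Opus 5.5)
Your treatment of $\eta_n$ is correct. Positivity is exactly $v_n<0$ from Proposition~\ref{lem:trace-sign}. Showing that $nb_n/(t+n)$ strictly decreases (its ratio is $<1$ iff $nq+t<n$) is a valid, slightly different route to strict increase, and hence to $D_n>0$. Your split $D_n=\frac{qt}{t+n+1}+\frac{b_n\{n(1-q)-t\}}{Y+n+2}$ also agrees with \eqref{eq:D-difference}.

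The gap is in the final step. Your linear inequality $(1-q)(t+n+1)<(2-q)\{n(1-q)-t\}$ is equivalent to
\[
N_n=n(1-q)^2+2q^2-2q-1+Y(2q-3)>0,
\]
and this is false on much of the region for exactly the indices that matter. At $(Y,q)=(1,1/2)$ it reads $n/4>7/2$, so it fails for every $6\leq n\leq14$; at $n=6$ the two sides are $17/4$ and $9/4$. For $n=6$ it fails at every admissible point with $Y\geq3/4$. At such points the second part of $D_n$ genuinely increases, so termwise monotonicity cannot be rescued. The decrease of $D_n$ must instead come from the first part dominating:
\[
\frac{qt}{(t+n+1)(t+n+2)}>\frac{b_n(-N_n)}{(Y+n+2)(Y+n+3)}.
\]
At $(1,1/2)$ with $n=6$ this reads roughly $0.00929>0.00873$, so a sharp, uniform upper bound on $b_n/q$ is indispensable. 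Your fallback (clear denominators, use monotonicity in $n$, check $n=6$) does not supply it. The quantity $b_n=\frac{Y}{Y+n+1}\frac{(t)_{n+1}}{(Y)_{n+1}}$ is a product of growing length, not a rational function of $(n,Y,q)$ of fixed degree.

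The paper supplies exactly this missing ingredient: $b_n\leq b_6<\frac45q$ for $n\geq6$. This holds because $b_n$ decreases in $n$, while $b_6/q$ increases in $Y$ and is log-convex in $a=q/Y$. Its maximum is therefore at $(Y,a)=(1,2/5)$ or $(1,1/2)$, with values $62271/78125$ and $6435/8192$. When $N_n<0$, this bound reduces $D_n-D_{n+1}>0$ to the polynomial inequality
\[
V_n=5t(Y+n+2)(Y+n+3)+4N_n(t+n+1)(t+n+2)>0.
\]
The paper proves it by concavity in $Y$, together with convexity and monotonicity in $q$ on the edges $Y=0,1$. This bound, not your linear inequality, explains the threshold $n\geq6$. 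At $(1,1/2)$ one has $D_5>D_4$ and $D_6>D_5$, so monotonicity really does fail before $n=6$.
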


\begin{proof}
Again put $b_n=u_n/e_n$. Then
$\eta_n=n(q-b_n)/(t+n)$ is positive and increasing, since $b_n$
decreases. Direct subtraction gives
\begin{equation}\label{eq:D-difference}
\begin{aligned}
D_n&=\frac{qt}{t+n+1}+b_n\frac{(1-q)n-t}{Y+n+2},\\[7pt]
D_n-D_{n+1}
&=\frac{qt}{(t+n+1)(t+n+2)}
+\frac{b_nN_n}{(Y+n+2)(Y+n+3)},\\[7pt]
N_n&=n(1-q)^2+2q^2-2q-1+Y(2q-3).
\end{aligned}
\end{equation}
For $n\geq6$, we first show $b_n<4q/5$. It suffices to estimate
$$
\frac{b_6}{q}=\frac{1+a}{a}
\frac{\prod_{j=1}^6((1+a)Y+j)}{\prod_{j=1}^7(Y+j)},
\quad a=q/Y.
$$
This expression increases with $Y$: its logarithmic derivative is
$$
a\sum_{j=1}^6\frac{j}{((1+a)Y+j)(Y+j)}-\frac1{Y+7}>0,
$$
because each of its first three summands exceeds $1/15$.
Its logarithm is convex in $a$, since the second derivative is at least
$$
\frac1{a^2}-\frac1{(1+a)^2}
-\sum_{j=1}^6\frac{Y^2}{((1+a)Y+j)^2}
\geq\frac{32}{9}-\frac32>0.
$$
The two possible maximum values are consequently
$62271/78125$ and $6435/8192$, at $(Y,a)=(1,2/5)$ and $(1,1/2)$.
Both are smaller than $4/5$.

If $N_n\geq0$, \eqref{eq:D-difference} is positive. If $N_n<0$,
the preceding bound reduces its positivity to that of
$$
V_n(Y,q)=5(Y+q)(Y+n+2)(Y+n+3)
+4N_n(Y+q+n+1)(Y+q+n+2).
$$
This polynomial is concave in $Y$ on $0\leq Y\leq1$,
$0\leq q\leq1/2$, since
$$
\frac{\partial^3V_n}{\partial Y^3}=48q-42<0,
\quad \frac{\partial^2V_n}{\partial Y^2}(0,q)
\leq-10n-21<0.
$$
At $Y=0,1$, the respective coefficients of $q^2,q^3,q^4$ are
$$
\begin{array}{c|ccc}
Y&[q^2]V_n&[q^3]V_n&[q^4]V_n\\[7pt]\hline
0&4n^3+4n^2-4n-12&8n^2+20n+16&4n+8\\[7pt]
1&4n^3+12n^2+28n+32&8n^2+28n+40&4n+8.
\end{array}
$$
They are positive for $n\geq6$, so the endpoint polynomials are
convex in $q$. Their derivatives at $q=1/2$ are
$-4n^3-9n^2+2n+6$ and $-4n^3-9n^2+26n+46$, both negative.
Thus they decrease on $[0,1/2]$. Writing $n=6+z$, their minimum
values are
$$
V_n(0,1/2)=z^3+\frac{37}{2}z^2+\frac{425}{4}z+180,
\quad V_n(1,1/2)=z^3+\frac{35}{2}z^2+\frac{317}{4}z+29.
$$
These are positive for $z\geq0$. Concavity in $Y$ completes the proof.
\end{proof}
\section{The finite algebraic inequalities}\label{sec:algebra}

We prove the eight inequalities of Proposition~\ref{lem:finite}.
Monotonicity and concavity reduce them to finitely many polynomial
sign checks, as shown below. All polynomials are defined by rational
expressions in independent variables $Y,q$, with $t=Y+q$.
Appendix~\ref{app:mathematica} reproduces their construction and the
exact coefficient calculations. No approximations to the curve or
to special functions enter this argument.

For a polynomial $p(z)=\sum_{i=0}^d a_i z^i$, write
\begin{equation}\label{eq:bernstein-conversion}
p(z)=\sum_{j=0}^d b_j\binom dj z^j(1-z)^{d-j},
\quad b_j=\sum_{i=0}^j a_i\frac{\binom ji}{\binom di}.
\end{equation}
This follows by expanding $(z+(1-z))^{d-i}$ in each monomial.
Since the basis functions are nonnegative and sum to one on $[0,1]$,
positive $b_j$ imply $p>0$. For coefficients that are themselves
polynomials in $s$, we also use
\begin{equation}\label{eq:row-lower}
\sum_{j=0}^e c_js^j\geq
c_0+\sum_{j=1}^e\min\{0,c_j\},
\quad 0\leq s\leq1.
\end{equation}
Thus every sign check reduces to rational arithmetic.

The vertical sections of \eqref{eq:closed-domain} are
$\ell(Y)\leq q\leq h(Y)$, where
\begin{equation}\label{eq:vertical-sections}
\ell(Y)=\max\left(\frac{2Y}{5},Y-\frac12\right),
\quad h(Y)=\min\left(\frac Y2,U(Y)\right),\quad 0<Y\leq1.
\end{equation}
The lower boundary changes at $Y=5/6$, and the upper at $Y=1/3$.

\subsection{Slope}

Denote the left sides of \eqref{eq:slope-head} and
\eqref{eq:slope-edge} by $T_4,T_5,T_6,E$, respectively. Set
$C=3Y+q+5$ and define
\begin{equation}\label{eq:slope-polynomials}
\begin{aligned}
S_m&=L_mT_m/Y,\quad
L_m=a_m(t)_{m+1}(Y+4)(Y+m+1)C,\quad m=4,5,6,\\[7pt]
S_E&=L_EE/Y,\quad L_E=42(t)_8(Y+7)(Y+8),
\quad (a_4,a_5,a_6)=(12,30,6).
\end{aligned}
\end{equation}
These are integer polynomials and all $L_m,L_E$ are positive.

For fixed $Y$, the appendix verifies
$\partial^3S_m/\partial q^3>0$ on $0\leq q\leq Y/2$.
At $q=h(Y)$ it verifies
$\partial S_m/\partial q<0$ when $Y\geq1/3$, and also when
$m=6$, $1/4\leq Y\leq1/3$; in the remaining cases it verifies
$\partial^2S_m/\partial q^2<0$.
Either upper-endpoint condition puts the minimum of $S_m$ at an
endpoint of \eqref{eq:vertical-sections}. Indeed, the second makes
$S_m$ concave. Under the first, its first derivative is convex;
after any zero it is negative up to the upper endpoint, so $S_m$
has no interior minimum.

The same calculation gives $S_m(Y,\ell(Y))>0$ and
$S_m(Y,h(Y))>0$. These endpoint and derivative checks use
\eqref{eq:bernstein-conversion}, with an affine change of variable
on each indicated interval. The factor $Y$ at the excluded endpoint
$Y=0$ is removed first. The exact intervals, including the subdivision
at $5/6$ for $S_5(Y,Y/2)$ and $S_6(Y,Y/2)$, are specified in the code.
The third derivatives are bounded below by discarding their positive
terms containing $q$ and replacing $q^j$ by $(Y/2)^j$ in their
negative terms; the resulting polynomials have positive coefficients.
Consequently $S_4,S_5,S_6>0$ on the domain.

For $S_E$, substitute $q=Y(2/5+s/10)$ and expand
$S_E(Y,Y(2/5+s/10))/Y$ in the Bernstein basis in $Y$.
Applying \eqref{eq:row-lower} to its twelve coefficient polynomials
gives positive lower bounds. Hence $S_E>0$ as well.

\subsection{Curvature}

Denote the left sides of \eqref{eq:curvature-head} and
\eqref{eq:curvature-edge} by $E_0,E_4,E_5,E_6$, respectively. Define
\begin{equation}\label{eq:curvature-polynomials}
P_i=\frac{\Delta_iE_i}{Y(1-q)},\quad i=0,4,5,6,
\end{equation}
where the positive denominators are
\begin{equation}\label{eq:curvature-denominators}
\begin{aligned}
\Delta_0={}&90(Y+1)(Y+2)(Y+3)(Y+5)(Y+4)^3(t+4)C^3(t)_4^2,\\[7pt]
\Delta_m={}&180(Y+1)(Y+2)(Y+3)(Y+4)^2(t+m+1)C^3(t)_4^2
(Y+5)_{m-2},\\[7pt]
&\hfill m=4,5,6.
\end{aligned}
\end{equation}
The appendix checks these polynomial identities directly.
Since $1-q\geq1/2$, it suffices to prove $P_i>0$.

Apply the same substitution $q=Y(2/5+s/10)$ to
$-\partial^2P_i/\partial q^2$. Bernstein expansion in $Y$,
followed by \eqref{eq:row-lower}, gives strictly positive lower
bounds for every coefficient polynomial in $s$. Thus each $P_i$
is strictly concave in $q$, and its positivity reduces to the four
boundary arcs of \eqref{eq:vertical-sections}. They are parameterized by
\begin{equation}\label{eq:four-arcs}
\begin{aligned}
&\left(\frac{5z}{6},\frac z3\right),\quad
\left(\frac z3,\frac{z(z+6)}{42}\right),\\[7pt]
&\left(\frac{1+2z}{3},\frac{1+2z}{6}\right),\quad
\left(\frac{5+z}{6},\frac{2+z}{6}\right),\quad 0\leq z\leq1.
\end{aligned}
\end{equation}
On each arc, use the factor $f$ specified in the array \texttt{factors}
of Appendix~\ref{app:mathematica}. The code verifies the factorization
of the substituted polynomial as $f(z)H(z)$ and checks that every
Bernstein coefficient of $H$ is positive. It also checks that $f$, after
division by $z^2$ on the first two arcs, has nonnegative monomial
coefficients and a positive constant term. Hence $f>0$ everywhere
required: only $z=0$ on the first two arcs is excluded, since $Y>0$.
All boundary values are therefore positive, and concavity proves
$P_i>0$ throughout the domain.

This proves the eight inequalities and completes Proposition~\ref{lem:finite}.
The appendix contains all polynomial constructions and sign checks:
267 positive Bernstein coefficients for the slope tests and 376 for
the curvature boundaries, together with 12 and 82 rational lower bounds.
No special-function evaluations enter this finite step.

\appendix
\section{Exact polynomial calculations}\label{app:mathematica}

The following \textsc{Mathematica} code implements the polynomial
calculations of Section~\ref{sec:algebra} in exact rational arithmetic.
It checks polynomial identities, factorizations and coefficient signs;
the arguments showing why these checks suffice are given in the text.
Copy the code without the line numbers into a single input cell and
evaluate; no additional packages are required.

\lstset{style=wlcode}
\begin{lstlisting}[firstnumber=1,
caption={Wolfram Language code for the finite polynomial
calculations.\\},label={lst:mathematica-checks}]
Module[{y, q, z, s, h, t, i, j, k, m, n, check, zeroQ, rationalQ, polynomial,
  bernstein, positiveBernstein, rowBounds, rising, e, u, v, g, lam, ga, va,
  kappa, rr, upper, slopeDen, sp, edge, lower, expectedLower,
  targets, intervals, mapped, order, core, sc = 0, nc, eb,
  expectedEB, eta, jj, tt, dn, independentGA, directionalGA,
  partialY, partialT, blockB, expr, delta0, deltaTail, delta, cp,
  degrees, rowDegrees, minima, bounds, cb = 0, cc = 0,
  fullPositive = 0, fullZero = 0, rules, factors, residualDegrees,
  residual, full, data, names = {0, 4, 5, 6}, seams},
 Catch[
  check[test_, label_] := If[! TrueQ[test],
    Print["FAIL: ", label]; Throw[$Failed]];
  zeroQ[a_] := Cancel[Together[a]] === 0;
  rationalQ[a_] := IntegerQ[a] || Head[a] === Rational;
  polynomial[a_, vars_List] := Module[{p = Expand[Cancel[Together[a]]]},
    check[PolynomialQ[p, vars] &&
      Complement[Variables[p], vars] === {}, "polynomial construction"];
    check[AllTrue[Flatten[CoefficientList[p, vars]], rationalQ],
      "exact rational coefficients"]; p];

  (* Bernstein conversion, including exact reconstruction.
     Coefficients may themselves be polynomials in another variable. *)
  bernstein[p_, x_] := Module[{d = Exponent[p, x], b},
    b = Table[Sum[Coefficient[p, x, j] Binomial[k, j]/Binomial[d, j],
      {j, 0, k}], {k, 0, d}];
    check[Expand[p - Sum[b[[k + 1]] Binomial[d, k]
      x^k (1 - x)^(d - k), {k, 0, d}]] === 0,
      "Bernstein reconstruction"]; b];
  positiveBernstein[p_, label_] := Module[{b = bernstein[p, z]},
    check[AllTrue[b, (rationalQ[#] && TrueQ[# > 0]) &], label];
    Print["PASS: ", label, "; coefficients = ", Length[b]]; Length[b]];
  rowBounds[p_] := Module[{rows},
    rows = CoefficientList[Expand[#], s] & /@ bernstein[p, y];
    check[AllTrue[Flatten[rows], rationalQ], "rational row coefficients"];
    (First[#] + Total[Select[Rest[#], TrueQ[# < 0] &]]) & /@ rows];

  (* Common rational product definitions; t = y + q. *)
  t = y + q;
  rising[x_, n_Integer] := Product[x + j, {j, 0, n - 1}];
  e[n_Integer] := rising[y, n + 1]/rising[t, n + 1];
  u[n_Integer] := y/(y + n + 1);
  v[n_Integer] := u[n] - q e[n];
  g[n_Integer] := e[n] (t + n)/n;
  lam = (y + t + 2)/(y + 3);
  ga = Cancel[(g[2] + lam g[3])/(1 + lam)];
  va = Cancel[(v[2] + lam v[3])/(1 + lam)];
  kappa = 3 - 6 y + 20 q - (8/3) y (1 - y);
  rr = Cancel[ga - kappa va];
  upper = 3 y/7 + 3 y^2/14;

  (* I. sp[[1]], sp[[2]], sp[[3]] are S_4, S_5, S_6;
     edge is S_E. Their displayed denominators are positive products. *)
  slopeDen = {12 rising[t, 5] (y + 4) (y + 5) (3 y + q + 5),
    30 rising[t, 6] (y + 4) (y + 6) (3 y + q + 5),
    6 rising[t, 7] (y + 4) (y + 7) (3 y + q + 5)};
  sp = Table[polynomial[slopeDen[[m - 3]]
    (ga - g[m] - kappa (va - v[m]))/y, {y, q}], {m, 4, 6}];
  edge = polynomial[42 rising[t, 8] (y + 7) (y + 8)
    (g[6] - g[7] - kappa (v[6] - v[7]))/y, {y, q}];
  check[AllTrue[Join[sp, {edge}],
    AllTrue[Flatten[CoefficientList[#, {y, q}]], IntegerQ] &],
    "integer slope polynomials"];
  Do[check[zeroQ[ga - g[m] - kappa (va - v[m]) -
    y sp[[m - 3]]/slopeDen[[m - 3]]],
    {"slope polynomial identity", m}], {m, 4, 6}];
  check[zeroQ[g[6] - g[7] - kappa (v[6] - v[7]) -
    y edge/(42 rising[t, 8] (y + 7) (y + 8))],
    "edge polynomial identity"];
  expectedLower = {
    384 y^7 + 10272 y^6 + 99372 y^5 + 439110 y^4 +
      1267476 y^3 + 2281788 y^2 + 1874448 y + 773280,
    3360 y^8 + 79080 y^7 + 751980 y^6 + 3931980 y^5 +
      (22887195/2) y^4 + 28299570 y^3 + 54157980 y^2 +
      44443140 y + 16900560,
    1440 y^9 + 38520 y^8 + 415554 y^7 + (4561527/2) y^6 +
      (33587967/4) y^5 + (83864667/4) y^4 + 49865688 y^3 +
      97435680 y^2 + 84362436 y + 30170592};
  Do[
    rules = CoefficientRules[Expand[D[sp[[m - 3]], {q, 3}]], {y, q}];
    lower = Expand[Total[Function[a, With[{ij = First[a], c = Last[a]},
      If[ij[[2]] == 0 || c < 0, c y^Total[ij]/2^ij[[2]], 0]]]
      /@ rules]];
    check[Expand[lower - expectedLower[[m - 3]]] === 0 &&
      AllTrue[CoefficientList[lower, y], TrueQ[# > 0] &],
      {"third derivative", m}];
    Print["PASS: third-derivative lower polynomial, m = ", m];
    targets = {
      {sp[[m - 3]] /. q -> 2 y/5, 0, 5/6},
      {sp[[m - 3]] /. q -> upper, 0, 1/3},
      {sp[[m - 3]] /. q -> y - 1/2, 5/6, 1}};
    intervals = If[m == 4, {{1/3, 1}}, {{1/3, 5/6}, {5/6, 1}}];
    targets = Join[targets,
      ({sp[[m - 3]] /. q -> y/2, #[[1]], #[[2]]} & /@ intervals),
      {{-D[sp[[m - 3]], q] /. q -> y/2, 1/3, 1}},
      If[m < 6, {{-D[sp[[m - 3]], {q, 2}] /. q -> upper, 0, 1/3}},
        {{-D[sp[[3]], {q, 2}] /. q -> upper, 0, 1/4},
         {-D[sp[[3]], q] /. q -> upper, 1/4, 1/3}}]];
    check[Length[targets] === m + 2, {"slope target count", m}]; nc = 0;
    Do[
      mapped = polynomial[targets[[k, 1]] /.
        y -> targets[[k, 2]] + (targets[[k, 3]] - targets[[k, 2]]) z,
        {z}];
      order = Min[CoefficientRules[mapped, {z}][[All, 1, 1]]];
      check[order === If[k <= 2, 1, 0] &&
        (order == 0 || targets[[k, 2]] == 0),
        {"excluded endpoint factor", m, k}];
      core = polynomial[mapped/
        ((targets[[k, 3]] - targets[[k, 2]]) z)^order, {z}];
      nc += positiveBernstein[core, {"slope", m, k}],
      {k, Length[targets]}];
    check[nc === {67, 87, 113}[[m - 3]], {"slope coefficient count", m}];
    sc += nc, {m, 4, 6}];
  mapped = polynomial[(edge /. q -> y (2/5 + s/10))/y, {y, s}];
  check[{Exponent[mapped, y], Exponent[mapped, s]} === {11, 9},
    "edge polynomial degrees"];
  eb = rowBounds[mapped];
  expectedEB = {183456, 73135566/275, 2542076726/6875,
    407462558321/825000, 21807956898/34375, 2311736036891/3300000,
    59399319085477/82500000, 11351021013803681/16500000000,
    204106264614529/343750000, 606831935049057/1375000000,
    1665105726353/6250000, 332197901301/2000000};
  check[eb === expectedEB && AllTrue[eb, TrueQ[# > 160000] &],
    "twelve edge bounds"];
  Print["PASS: slope coefficients = ", sc, "; edge bounds = ", Length[eb]];

  (* II. cp[[1]], cp[[2]], cp[[3]], cp[[4]] are P_0, P_4, P_5, P_6.
     These are the curvature polynomials, distinct from the S polynomials. *)
  eta[n_Integer] := Cancel[-v[n]/g[n]];
  jj[n_Integer] := Sum[1/(y + j), {j, 0, n}];
  tt[n_Integer] := Sum[1/(t + j), {j, 0, n - 1}];
  dn[n_Integer] := Cancel[(t + n) (eta[n + 1] - eta[n])];
  independentGA = Cancel[ga /. q -> h - y];
  partialY = Sum[1/(y + j), {j, 0, 3}] +
    2/(5 h + 2 y + 10) - 2/(h + 2 y + 5);
  partialT = 5/(5 h + 2 y + 10) - 1/h - 1/(h + 1) -
    1/(h + 2) - 1/(h + 2 y + 5);
  check[zeroQ[D[independentGA, y]/independentGA - partialY] &&
    zeroQ[D[independentGA, h]/independentGA - partialT],
    "independent partial derivatives of g_A"];
  directionalGA = Cancel[Together[ga ((partialT /. h -> t) +
    rr (partialY /. h -> t))]];
  blockB = Cancel[directionalGA - (3/20) (ga - rr)];
  expr = Join[{Cancel[va (tt[4] - rr jj[4]) - eta[4] blockB]},
    Table[Cancel[va (1 - rr) - blockB dn[n]], {n, 4, 6}]];
  delta0 = 90 (y + 1) (y + 2) (y + 3) (y + 5) (y + 4)^3
    (t + 4) (3 y + q + 5)^3 Product[(t + j)^2, {j, 0, 3}];
  deltaTail[n_Integer] := 180 (y + 1) (y + 2) (y + 3) (y + 4)^2
    (t + n + 1) (3 y + q + 5)^3 Product[(t + j)^2, {j, 0, 3}]
    Product[y + j, {j, 5, n + 2}];
  delta = Expand /@ Join[{delta0}, Table[deltaTail[n], {n, 4, 6}]];
  cp = Table[polynomial[delta[[i]] expr[[i]]/(y (1 - q)), {y, q}],
    {i, 4}];
  degrees = {{19, 16}, {20, 18}, {21, 19}, {22, 20}};
  rowDegrees = {{18, 14}, {19, 16}, {20, 17}, {21, 18}};
  minima = {2571264000, 15396480000, 130170240000, 1221454080000};
  Do[
    data = Last /@ CoefficientRules[delta[[i]], {y, q}];
    check[AllTrue[data, TrueQ[# >= 0] &] && AnyTrue[data, TrueQ[# > 0] &],
      {"positive denominator", names[[i]]}];
    check[AllTrue[Flatten[CoefficientList[cp[[i]], {y, q}]], IntegerQ] &&
      {Exponent[cp[[i]], y], Exponent[cp[[i]], q]} === degrees[[i]],
      {"integer numerator and bidegree", names[[i]]}];
    check[zeroQ[expr[[i]] - y (1 - q) cp[[i]]/delta[[i]]],
      {"curvature identity", names[[i]]}];
    mapped = polynomial[-D[cp[[i]], {q, 2}] /.
      q -> y (2/5 + s/10), {y, s}];
    check[{Exponent[mapped, y], Exponent[mapped, s]} === rowDegrees[[i]],
      {"concavity degrees", names[[i]]}];
    bounds = rowBounds[mapped];
    check[Length[bounds] === 18 + i && AllTrue[bounds, TrueQ[# > 0] &] &&
      First[Ordering[bounds, 1]] === 1 && Min[bounds] === minima[[i]],
      {"concavity bounds", names[[i]]}]; cb += Length[bounds];
    Print["PASS: curvature ", names[[i]], "; bounds = ", Length[bounds],
      "; minimum = ", Min[bounds]], {i, 4}];

  (* Four boundary arcs, mapped to 0 <= z <= 1, in the stated order.
     Only z = 0 on the first two arcs is excluded, since y > 0. *)
  rules = {{y -> 5 z/6, q -> z/3},
    {y -> z/3, q -> z (z + 6)/42},
    {y -> (1 + 2 z)/3, q -> (1 + 2 z)/6},
    {y -> (5 + z)/6, q -> (2 + z)/6}};
  factors = {
    {z^2, z^2 (z + 6)^2, (2 z + 1)^2 (2 z + 7)^2, 1},
    {z^2, z^2 (z + 6)^2, (2 z + 1)^2 (2 z + 7)^3, 1},
    {z^2, z^2 (z + 6)^2, (2 z + 13) (2 z + 1)^2 (2 z + 7)^3, 1},
    {z^2 (z + 6), z^2 (z + 6)^2, (2 z + 1)^2 (2 z + 7)^3, 1}};
  residualDegrees = {{18, 29, 16, 20}, {19, 32, 16, 21},
    {20, 34, 16, 22}, {20, 36, 18, 23}};
  Do[
    mapped = polynomial[cp[[i]] /. rules[[k]], {z}];
    full = bernstein[mapped, z];
    check[AllTrue[full, (rationalQ[#] && TrueQ[# >= 0]) &] &&
      Count[full, 0] === If[k <= 2, 2, 0],
      {"full boundary coefficients", names[[i]], k}];
    fullPositive += Count[TrueQ[# > 0] & /@ full, True];
    fullZero += Count[full, 0];
    residual = polynomial[mapped/factors[[i, k]], {z}];
    check[Expand[mapped - factors[[i, k]] residual] === 0 &&
      Exponent[residual, z] === residualDegrees[[i, k]],
      {"boundary factorization", names[[i]], k}];
    core = polynomial[factors[[i, k]]/If[k <= 2, z^2, 1], {z}];
    check[(core /. z -> 0) > 0 &&
      AllTrue[CoefficientList[core, z], TrueQ[# >= 0] &],
      {"strict elementary factor", names[[i]], k}];
    cc += positiveBernstein[residual, {"curvature boundary", names[[i]], k}],
    {i, 4}, {k, 4}];
  seams = {{y -> 1/3, q -> 1/6}, {y -> 5/6, q -> 1/3},
    {y -> 1, q -> 1/2}};
  check[AllTrue[Flatten[Table[cp[[i]] /. seams[[k]], {i, 4}, {k, 3}]],
    TrueQ[# > 0] &], "strict positivity at junctions"];
  check[{sc, Length[eb], cc, cb, fullPositive, fullZero} ===
    {267, 12, 376, 82, 405, 16}, "final audit counts"];
  Print["PASS: all exact identities and strict inequalities verified."];
  <|"Verified" -> True, "PositiveCoefficients" -> sc + cc,
    "RationalBounds" -> Length[eb] + cb|>
 ]
]
\end{lstlisting}

The function \lstinline[style=wlcode]|bernstein| implements
\eqref{eq:bernstein-conversion}, while
\lstinline[style=wlcode]|rowBounds| computes the lower bounds in
\eqref{eq:row-lower}. The arrays \lstinline[style=wlcode]|sp| and
\lstinline[style=wlcode]|cp| contain the polynomials in
\eqref{eq:slope-polynomials} and \eqref{eq:curvature-polynomials},
respectively; \lstinline[style=wlcode]|factors| specifies the elementary
factors used on the four boundary arcs.
The variables \lstinline[style=wlcode]|y| and
\lstinline[style=wlcode]|q| represent the independent parameters $Y,q$,
with $t=Y+q$. For the partial derivatives of $g_A$, the code uses
\lstinline[style=wlcode]|h| in place of $t$ and differentiates before
substituting \lstinline[style=wlcode]|h = y + q|.

\par\noindent\begin{minipage}{\linewidth}
The expression has local variables and reads no files. A failed test
prints its label and stops evaluation. Successful evaluation verifies
$643$ positive Bernstein coefficients and $94$ rational lower bounds,
and returns
\begin{lstlisting}[style=wlcode,numbers=none]
<|"Verified" -> True, "PositiveCoefficients" -> 643,
  "RationalBounds" -> 94|>
\end{lstlisting}
\end{minipage}
An unnumbered copy of the code is supplied as
\texttt{AskeyElementaryChecks.wl}. All constants must be retained
as integers or fractions, not decimals.

\section*{Acknowledgements}
The \textsc{Mathematica} code in Appendix~\ref{app:mathematica} was developed with the assistance of ChatGPT (OpenAI), based on an earlier version written by the authors that was less efficient but already valid for verification purposes. The authors take responsibility for the final code. The first author acknowledges financial support from the Centre for Mathematics of
the University of Coimbra (CMUC), funded by the Portuguese Foundation for
Science and Technology (FCT), under the projects UID/00324/2025
(\url{https://doi.org/10.54499/UID/00324/2025}) and UID/PRR/00324/2025. The
first author also acknowledges financial support from the FCT under the grant
\url{https://doi.org/10.54499/2022.00143.CEECIND/CP1714/CT0002}.

\makeatletter
\renewcommand{\@biblabel}[1]{\@defaultbiblabelstyle{#1}}
\def\bibsetup{}
\makeatother
\providecommand{\bysame}{\leavevmode\hbox to3em{\hrulefill}\thinspace}
\providecommand{\MR}{\relax\ifhmode\unskip\space\fi MR }
\providecommand{\MRhref}[2]{%
  \href{http://www.ams.org/mathscinet-getitem?mr=#1}{#2}
}
\providecommand{\href}[2]{#2}

\end{document}